\documentclass[11pt,letterpaper, reqno]{amsart}

\usepackage[T1]{fontenc}	
\usepackage{amsmath, amsthm, amssymb}
\usepackage{mathrsfs}

\usepackage{cite}
\usepackage[dvipsnames]{xcolor}
\usepackage[inline,shortlabels]{enumitem}
\usepackage[hyphens]{url}									
\usepackage{bm}

\usepackage{mathtools}
\usepackage{xparse}
\usepackage{microtype}

\usepackage[a4paper,
  textwidth=17.0cm,
  textheight=24.0cm,
  ]{geometry}

\usepackage[pdftex,bookmarks,
						hidelinks,					
						colorlinks,				
						breaklinks,
						citecolor=OliveGreen,
						linkcolor=Maroon,
						]{hyperref} 

\allowdisplaybreaks

\ExplSyntaxOn
\NewDocumentCommand{\makeabbrev}{mmm}
 {
  \yoruk_makeabbrev:nnn { #1 } { #2 } { #3 }
 }

\cs_new_protected:Npn \yoruk_makeabbrev:nnn #1 #2 #3
 {
  \clist_map_inline:nn { #3 }
   {
    \cs_new_protected:cpn { #2 } { #1 { ##1 } }
   }
 }
 \ExplSyntaxOff
 
\makeabbrev{\textbf}{tbf#1}{a,b,c,d,e,f,g,h,i,j,k,l,m,n,o,p,q,r,s,t,u,v,w,x,y,z,A,B,C,D,E,F,G,H,I,J,K,L,M,N,O,P,Q,R,S,T,U,V,W,X,Y,Z}

\makeabbrev{\textbf}{bf#1}{a,b,c,d,e,f,g,h,i,j,k,l,m,n,o,p,q,r,s,t,u,v,w,x,y,z,A,B,C,D,E,F,G,H,I,J,K,L,M,N,O,P,Q,R,S,T,U,V,W,X,Y,Z}

\makeabbrev{\textsf}{tsf#1}{a,b,c,d,e,f,g,h,i,j,k,l,m,n,o,p,q,r,s,t,u,v,w,x,y,z,A,B,C,D,E,F,G,H,I,J,K,L,M,N,O,P,Q,R,S,T,U,V,W,X,Y,Z}

\makeabbrev{\mathsf}{mss#1}{a,b,c,d,e,f,g,h,i,j,k,l,m,n,o,p,q,r,s,t,u,v,w,x,y,z,A,B,C,D,E,F,G,H,I,J,K,L,M,N,O,P,Q,R,S,T,U,V,W,X,Y,Z}

\makeabbrev{\mathfrak}{mf#1}{a,b,c,d,e,f,g,h,i,j,k,l,m,n,o,p,q,r,s,t,u,v,w,x,y,z,A,B,C,D,E,F,G,H,I,J,K,L,M,N,O,P,Q,R,S,T,U,V,W,X,Y,Z,
sl,gl
}

\makeabbrev{\mathrm}{mrm#1}{a,b,c,d,e,f,g,h,i,j,k,l,m,n,o,p,q,r,s,t,u,v,w,x,y,z,A,B,C,D,E,F,G,H,I,J,K,L,M,N,O,P,Q,R,S,T,U,V,W,X,Y,Z}

\makeabbrev{\mathbf}{mbf#1}{a,b,c,d,e,f,g,h,i,j,k,l,m,n,o,p,q,r,s,t,u,v,w,x,y,z,A,B,C,D,E,F,G,H,I,J,K,L,M,N,O,P,Q,R,S,T,U,V,W,X,Y,Z}

\makeabbrev{\mathcal}{mc#1}{A,B,C,D,E,F,G,H,I,J,K,L,M,N,O,P,Q,R,S,T,U,V,W,X,Y,Z}

\makeabbrev{\mathbb}{mbb#1}{A,B,C,D,E,F,G,H,I,J,K,L,M,N,O,P,Q,R,S,T,U,V,W,X,Y,Z}

\makeabbrev{\mathscr}{ms#1}{A,B,C,D,E,F,G,H,I,J,K,L,M,N,O,P,Q,R,S,T,U,V,W,X,Y,Z}

\makeabbrev{\mathrm}{#1}{
id,ran,rk,diag,stab,ann,conv,pr,ev,tr,End,Hom,sgn,im,op,can,fin,ext,red,tot,lex,Aut,Inn,unit,hor,ver,new,eucl,old,
rot,usc,lsc,Lip,lip,bSymLip,osc,AC,loc,coz,z,
supp,Opt,Adm,Cpl,Geo,GeoOpt,GeoAdm,GeoCpl,reg,res,
bd,co,Ric,Exp,dExp,dist,seg,Seg,cut,fcut,Cut,SDiff,Iso,Isom,diam,cl,Homeo,Diff,Der,vol,dvol,inj,relint, Graph, sub,
var,law,Var,Poi,Gam,pa,so,iso,fs,inv,pqi,mix,erg,form,
TestF,
ob,cod,inp,
}

\makeabbrev{\mathsf}{#1}{CD,BE,MCP,Ent,wMTW,MTW,Ch,RCD,EVI,Rad,dRad,SL,cSL,dSL,ScL,Irr,SC,wFe,VA,MetMeas,UMeas,CSMet,Met,USp,Meas,Mbl,alg,Alg}

\makeabbrev{\mathsc}{#1}{mmaf,cg}

\renewcommand{\div}{\mathrm{div}}

\newcommand{\mathsc}[1]{\text{\textsc{#1}}}

\DeclareMathOperator{\eqdef}{\coloneqq}

\newcommand{\diff}{\mathop{}\!\mathrm{d}}						
		
\newcommand{\set}[1]{\left\{#1\right\}}							
\newcommand{\paren}[1]{\left(#1\right)}							
\newcommand{\braket}[1]{\left[#1\right]}

\newcommand{\scalprod}[2]{\left\langle #1 \,,\, #2\right\rangle}		
\newcommand{\Cb}{\mcC_b}									

\DeclareMathOperator{\car}{\mathbf 1}

\newcommand{\N}{{\mathbb N}}
\newcommand{\R}{{\mathbb R}}
\DeclareMathOperator{\Q}{{\mathbb Q}}

\newcommand{\restr}[1]{\big\lvert_{#1}}

\newcommand{\comma}{\,\mathrm{,}\;\,}
\newcommand{\semicolon}{\,\mathrm{;}\;\,}
\newcommand{\fstop}{\,\mathrm{.}}

\let\temp\phi
\let\phi\varphi
\let\varphi\temp

\let\temp\epsilon
\let\epsilon\varepsilon
\let\varepsilon\temp
\newcommand{\eps}{\epsilon}

\numberwithin{equation}{section}
\theoremstyle{plain}
\newtheorem{theorem}{Theorem}[section]

\newtheorem{proposition}[theorem]{Proposition}
\newtheorem{lemma}[theorem]{Lemma}
\newtheorem{corollary}[theorem]{Corollary}

\theoremstyle{definition}

\theoremstyle{remark}
\newtheorem{remark}[theorem]{Remark}
\newtheorem{example}[theorem]{Example}

\renewcommand{\paragraph}[1]{\medskip\emph{#1}.\quad}

\newcommand{\cic}{\mcC_{T}}
\newcommand{\acic}{\mathcal{AC}^2_{T}}

\begin{document}

\title[Superposition for the continuity equation with reaction]{A Lagrangian superposition principle for the continuity equation with reaction}

\author[S. Almi]{Stefano Almi}
\address[S. Almi]{Department of Mathematics and Applications ``R. Caccioppoli'', University of Naples
Federico II, Via Cintia, Monte S. Angelo, I-80126 Napoli, Italy}
\email{stefano.almi@unina.it} 

\author[G.~E. Sodini]{Giacomo Enrico Sodini}
\address[G.~E. Sodini]{Institute of Analysis and Scientific Computing, TU Wien,
  Wiedner Hauptstra\ss e 8-10, A-1040, Vienna, Austria}
\email{giacomo.sodini@tuwien.ac.at}

\begin{abstract}
We study the continuity equation with reaction~$\partial_t\mu + \div(v\mu) = w\mu$ on~$[0,T]\times\R^d$, driven by Borel velocity and reaction fields~$v$ and~$w$. We provide the first version of a Lagrangian superposition principle for the equation in its natural generality, assuming only finite quadratic energy without imposing boundedness conditions on the reaction term neither any form of compactness for the support of the measure. More precisely, every solution~$\mu \in \mcC([0,T];\msM_+(\R^d))$ with finite quadratic energy is represented by a probability measure~$\eta$ on absolutely continuous curves in the geometric cone over~$\R^d$, concentrated on the solutions of the characteristic system~$x' = v(x)$,~$k' = w(x)\,k$, through the $2$-homogeneous marginal~$h^2_t(\eta)=\mu_t$. We also establish the converse implication. The proof passes through a regularization of the triple~$(v,w,\mu)$ which produces fields satisfying only \emph{local} bounds; the core of the argument is therefore a representation theory under such local assumptions, whose key tool, of independent interest, is a two-time representation formula relating~$\mu_s$ and~$\mu_t$ along the flow of~$v$ for arbitrary times~$s,t$.
\end{abstract}

\subjclass[2020]{35F16, 28A33, 49Q22, 37C10, 60J76}
\keywords{continuity equation with reaction; superposition principle; Lagrangian representation; geometric cone; Hellinger--Kantorovich distance; characteristic system; measure-valued solutions}

\maketitle

\setcounter{tocdepth}{2}
\tableofcontents

\section{Introduction}\label{s:Intro}
The goal of the present paper is to provide a novel superposition principle for solution to a {\em continuity equation with reaction}, removing unnecessary assumptions such as global boundedness of the reaction term~\cite{Man07} or {\em a priori} boundedness of the space domain~\cite{BreCarFanRom22}. Precisely, for a finite time horizon~$T>0$ and Borel maps~$v\colon[0,T]\times\R^d\to\R^d$ and~$w\colon[0,T]\times\R^d\to\R$, we deal with solutions $\mu\in\mcC([0,T];\msM_+(\R^d))$ to
\begin{equation}\label{eq:cerintro}
\partial_t\mu_t + \div(v_t\mu_t) = w_t\mu_t \qquad \text{in } \mcD'\big((0,T)\times\R^d\big) \comma
\end{equation}
where $\msM_+(\R^d)$ denotes the set of non-negative finite Borel measures on~$\R^{d}$ endowed with the narrow topology. In~\eqref{eq:cerintro} the velocity field~$v$ transports mass along its characteristics, while the scalar field~$w$ dilates it, increasing it where~$w>0$ and diminishing it where~$w<0$. Equation~\eqref{eq:cerintro} is the prototypical evolution underlying the dynamical formulation of {\em unbalanced optimal transport} \cite{Benamou2003}. A dynamic notion of distance between non-negative finite Borel measures on~$\R^{d}$ inspired by~\cite{BenBre} was indeed independently introduced and studied in~\cite{ChiPeySchVia2018a, KonMonVor2016}. A comprehensive study of unbalanced transport and the Hellinger--Kantorovich geometry is provided in the metric setting in the seminal papers~\cite{LieMieSav18, LieMieSav16} (see also~\cite{ChiPeySchVia2018b, SavSod24, DScSod25}). We further refer to~\cite{BreFan2020, LomMai, FitLauSch2017, Eyr2024} for some applications of unbalanced optimal transport to imaging, inverse problems, and machine learning.

\medskip

\noindent{\bf Superposition principles.} The Lagrangian, or probabilistic, representation of solutions to first-order evolution equations of the form~\eqref{eq:cerintro} has a long history. When the reaction term is absent, i.e.~$w\equiv0$ and~\eqref{eq:cerintro} reduces to the classical continuity equation~$\partial_t\mu + \div(v\mu)=0$, the superposition principle~\cite[Theorem~8.2.1]{AmbGigSav08} (cf.~also~\cite{Smirnov93}) represents any solution as a superposition of integral curves of~$v$, whenever $v$ is $p$-integrable with respect to the measure $\mu$, with $p \in (1, \infty)$. The theory has been extended to a purely metric-measure viewpoint in~\cite{AmbTre14, Tre16, Stepanov-Trevisan17, AmbRenVit26}, providing a link between the superposition principle and the decomposition of normal currents in simpler ones associated with rectifiable curves. In particular,~\cite{Stepanov-Trevisan17} extended the probabilistic representations previously obtained in~\cite{Lisini07, Lisini16}. We further mention recent superposition results obtained in the very general setting of Fokker--Planck--Kolmogorov equations in~\cite{BogRocSha21} and in the analysis of continuity equations with singular flux~\cite{Abedi-Zhenhao-Schultz24, AlmRosSav2025}, the latter being related to the study of bounded variation curves of probability measures.    

The key feature of the superposition principle is to establish a link between the Eulerian representation of the flow of measures solving the continuity equation and its Lagrangian 
description associated with the characteristic system of ODEs. Such a connection allows to extract finer information from the continuity equation, as proposed in \cite{Ambrosio04} for transport and continuity equations featuring velocitiy fields with low regularity (see also \cite{Ambrosio-Crippa08, Ambrosio-Bernard08, Ambrosio-Figalli09, AmbTre14}).

Let us mention that the superposition principle has been further employed for the well-posedness of mean-field particle systems in~\cite{Ambrosio-Fornasier-Morandotti-Savare21, Almi-DEramo-Morandotti-Solombrino, Bonafini-Fornasier-Schmitzer, Morandotti-Solombrino, Almi-Morandotti-Solombrino_JEE}, as well as for the analysis and finite-particle approximation of mean-field optimal control problems \cite{Albi-Almi-Morandotti-Solombrino,Almi-Durastanti-Solombrino1,Almi-Durastanti-Solombrino2,Almi-Morandotti-Solombrino_JDE,Cavagnari-Lisini-Orrieri-Savare22,Fornasier-Lisini-Orrieri-Savare19}, where, compared to previous literature (see, e.g.,~\cite{bongini2017optimal,Fornasier-Solombrino}), regularity restrictions on the control variable can be dropped.

In the presence of a reaction term, a representation on the \emph{geometric cone}~$\mfC[\R^{d}]$ over~$\R^d$, where the extra coordinate encodes the local mass and~$w$ governs its growth, was obtained by Maniglia~\cite{Man07} under a globally bounded reaction term~$w$ and, in the framework of the Hellinger--Kantorovich distance, by Bredies, Carioni, Fanzon and Romero~\cite{BreCarFanRom22} under the additional technical assumption that the support of $\mu$ is contained in a compact subset $\Omega$ of $\R^{d}$. The use of the geometric cone, instead of the simple product of~$\R$ with~$[0,\infty)$, is justified by the fact that, whenever the mass coordinate is equal to~$0$, the position coordinate is free, and does not need to solve an ODE, see~\eqref{eq:odegen} below. We also mention that, as a consequence of the Benamou-Brenier formulation of the Hellinger--Kantorovich distance in~\cite[Theorem 8.18]{LieMieSav18}, one can obtain a form of our superposition principle but only for the \emph{minimal} pair~$(v,w)$ driving the curve~$\mu$ and not for a general one.

\medskip

\noindent{\bf Our contribution.} The purpose of this work is to establish a superposition principle for~\eqref{eq:cerintro} on the whole space~$\R^d$ requiring \emph{no bound at all}, not even a local one, on the fields~$v$ and~$w$, which are only assumed to be Borel. Referring to Section~\ref{s:preliminaries} for the basic notation, the main result of our work reads as follows.

\begin{theorem}[Superposition principle]\label{thm:superconegen}
Let~$v\colon [0,T] \times \R^d \to \R^d$,~$w\colon [0,T] \times \R^d \to \R$ be Borel maps, and let~$\mu \in \mcC([0,T]; \msM_+(\R^d))$ be satisfying the following assumptions:
\begin{align}\label{eq:cerconegen}
&\vphantom{\int_{0}^{T}} \partial_t \mu + \div ( v \mu) = w\mu \quad \text{in } \mcD'((0,T) \times \R^d) \comma
\\ \label{eq:pbdconegen}
& \int_0^T \int_{\R^d} \braket{|v_t|^2 + |w_t|^2} \diff \mu_t \diff t < \infty \fstop
\end{align}
Then there exists a probability measure~$\eta \in \mcP(\mcC([0, T]; \mfC[\R^{d}]))$ such that:
\begin{enumerate}
\item $\eta$ is concentrated on curves~$\gamma = [x_{\gamma}, r_{\gamma}] \in \mathcal{AC}^{2}([0, T]; \mfC[\R^{d}])$ such that
\begin{equation}\label{eq:odegen}
 x_\gamma'(t) = v_t(x_\gamma(t)) \comma \qquad k_\gamma'(t) = w_t(x_\gamma(t))\, k_\gamma(t) \qquad \text{for a.e.~} t \in r_\gamma^{-1}\big( (0,\infty) \big)\,,
\end{equation}
where $k_{\gamma} = r_{\gamma}^{2}$;
\item $h_t^2(\eta) = \mu_t$ for every~$t \in [0,T]$, i.e., 
\begin{align*}
    \int_{\R^d} \phi \diff h_t^2(\eta) \eqdef \int_{\mcC([0, T]; \mfC[\R^{d}])} \phi(x_\gamma(t))\, r^2_\gamma(t) \diff \eta(\gamma) = \int_{\R^{d}} \phi \diff \mu_{t} \qquad \text{for every } \phi \in \Cb(\R^d)\,;
\end{align*}
\item it holds
\begin{equation}\label{eq:energyidgen}
 \int_{\mcC([0, T]; \mfC[\R^{d}])} \int_{0}^{T} |\gamma'|^{2} (t) \diff t  \diff\eta(\gamma) = \int_0^T \int_{\R^d} \braket{ |v_t|^2 + \tfrac14 |w_t|^2 } \diff\mu_t \diff t \fstop
\end{equation}
\end{enumerate}
\end{theorem}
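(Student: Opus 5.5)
The strategy is to regularize, represent the regularized problem through its flow, and pass to the limit in the space of curves on the cone. As the abstract indicates, we first approximate the triple $(v,w,\mu)$ by a regularized triple $(v^\epsilon,w^\epsilon,\mu^\epsilon)$.

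\textbf{Regularization.} Let $\rho_\epsilon$ be a standard Gaussian mollifier (positive everywhere), and set $\mu^\epsilon_t \eqdef \mu_t*\rho_\epsilon$, $v^\epsilon_t \eqdef (v_t\mu_t)*\rho_\epsilon/\mu^\epsilon_t$, $w^\epsilon_t \eqdef (w_t\mu_t)*\rho_\epsilon/\mu^\epsilon_t$. Then $(v^\epsilon,w^\epsilon,\mu^\epsilon)$ still solves the equation in $\mcD'$. By Jensen's inequality (convexity of $(a,m)\mapsto |a|^2/m$) the energy satisfies
\[\int_0^T\!\!\int [|v^\epsilon|^2+|w^\epsilon|^2]\diff\mu^\epsilon_t\diff t \le \int_0^T\!\!\int [|v|^2+|w|^2]\diff\mu_t\diff t.\]
The fields $v^\epsilon,w^\epsilon$ are smooth in space but only \emph{locally} Lipschitz and locally bounded, since $\mu^\epsilon_t$ has no uniform lower bound at infinity. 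The first core step is therefore the local theory. We show that the flow $X^\epsilon_{s,t}$ of $v^\epsilon$ does not blow up $\mu^\epsilon$-a.e. To this end we use the two-time representation formula
\[\mu^\epsilon_t = (X^\epsilon_{s,t})_\#\Big(\exp\Big(\int_s^t w^\epsilon_r(X^\epsilon_{s,r})\diff r\Big)\mu^\epsilon_s\Big).\]
We prove this by testing against $\phi(X_{t,s}^\epsilon(\cdot))$ transported backward along the flow on balls, with a localization/cutoff argument in which the exit-time sets have vanishing weighted mass thanks to the energy bound.

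\textbf{Lift to the cone.} Given the formula, define $\eta^\epsilon$ as the law of the curve
\[t\mapsto \big[X^\epsilon_{0,t}(x),\ r^\epsilon_t(x)\big], \qquad r^\epsilon_t(x) \eqdef \Big(\tfrac{\mu^\epsilon_0(\R^d)}{1}\Big)^{1/2}\exp\Big(\tfrac12\int_0^t w^\epsilon_r(X^\epsilon_{0,r}x)\diff r\Big),\]
under the probability $\mu^\epsilon_0/\mu^\epsilon_0(\R^d)$. The curves then solve \eqref{eq:odegen} for the regularized fields, and $h^2_t(\eta^\epsilon)=\mu^\epsilon_t$ by the two-time formula with $s=0$.

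There is one subtlety: the initial mass may be zero, or mass may be created from zero where $\mu_0$ vanishes. Because $\rho_\epsilon>0$ and the flows are diffeomorphisms, $\mu^\epsilon_t$ has positive density for all $t$, so starting from time $0$ suffices. Along each curve we have $|\gamma'|^2=|v^\epsilon|^2r^2+\tfrac14|w^\epsilon|^2r^2$, since $r=\sqrt k$ and $r'=\tfrac12 w r$. Integrating, the energy of $\eta^\epsilon$ equals the regularized energy, which is bounded by the right-hand side of \eqref{eq:energyidgen}.

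\textbf{Tightness and limit.} For tightness in $\mcP(\mcC([0,T];\mfC[\R^d]))$ we use the uniform action bound, which gives equi-$\tfrac12$-Hölder curves off small sets, together with tightness of the initial data. For the latter, $\int r^2(0)\diff\eta^\epsilon = \mu^\epsilon_0(\R^d)$ is bounded, and $h^2_0(\eta^\epsilon)$ is tight. Note, however, that a curve on the cone may sit at the vertex with large $x$. This forces us to normalize $\eta^\epsilon$ differently: we reparametrize so that $\eta^\epsilon$ is concentrated on curves with $r$ bounded below at some time, or we identify vertex curves. A convenient choice is to work with the cone distance, in which all points with $r=0$ coincide. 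Then the coercive functional $\gamma\mapsto \mathsf d(\gamma(0),\mfo)+\int|\gamma'|^2$ plus the tightness of the $x$-marginals weighted by $r^2$ yields compactness via Ascoli--Arzelà.

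Passing to a limit $\eta$, the marginal identity in item (2) persists, since the test function $\phi(x)r^2$ is continuous on the cone with quadratic growth and is uniformly integrable. Lower semicontinuity of the action gives $\le$ in \eqref{eq:energyidgen}.

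\textbf{Main obstacle: the ODE constraint.} The hardest step is showing that $\eta$ is concentrated on solutions of \eqref{eq:odegen} for the \emph{Borel} fields $v,w$. I would follow the argument of \cite[Theorem 8.2.1]{AmbGigSav08}. Approximate $v,w$ in $L^2(\mu_t\diff t)$ by continuous compactly supported $\tilde v,\tilde w$ and estimate
\[\int\Big|x(t)-x(0)-\int_0^t\tilde v(x)\diff r\Big|\, r^2 \diff\eta,\]
and the analogous quantity for $k$, uniformly in $\epsilon$. This uses $\|v^\epsilon-\tilde v\|_{L^2(\mu^\epsilon)}\le \|v-\tilde v\|_{L^2(\mu)}+o(1)$ and Cauchy--Schwarz with the $r^2$ weight. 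The weighting by $r^2$ is precisely why the ODE is required only where $r_\gamma>0$.

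Once the curves solve the ODE, the energy along each curve is at least the integrand computed above. Together with item (2), this gives $\ge$ in \eqref{eq:energyidgen}, hence equality.
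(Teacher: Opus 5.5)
\textbf{The gap is in your first core step.} You claim that the flow of $v^\eps$ does not blow up $\mu^\eps$-a.e. This would give a global two-time formula $\mu^\eps_t=(X^\eps_{s,t})_\sharp(W^\eps_{s\to t}\,\mu^\eps_s)$ and a lift anchored at $t=0$. The claim is false, and the energy bound cannot rescue it. Without a global bound on $w$, finite energy only forces the weight $W$ to tend to $0$ at a blow-up time; it does not prevent blow-up.

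Example~\ref{example:intro} is a counterexample already at the regularized level. With your Gaussian mollification, $\mu^\eps_t=m(t)\rho_\eps(\cdot-g(t))$ and $v^\eps_t\equiv v_t(g(t))=1+\tan^2 t$ for $t<\pi/2$. Hence $X^\eps_{0,t}(x)=x+\tan t$ blows up at $t=\pi/2$ for every $x$. For $t>\pi/2$, new mass $\mu^\eps_t\neq 0$ enters from infinity, so your $\eta^\eps$ satisfies $h^2_t(\eta^\eps)=0\neq\mu^\eps_t$ there.

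Positivity of the density does not help. The flow map is only a diffeomorphism $E_{0\to t}\to E_{t\to 0}$ between open sets, which need not be all of $\R^d$. The same escape happens with the paper's regularization $\mu_t*\varrho_\eps+\eps\mssg$: the moving bump still leaves every ball.

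What is actually true is only the restricted identity $(X_{s\to t})_\sharp\big(W_{s\to t}\,\mu_s\restr{E_{s\to t}}\big)=\mu_t\restr{E_{t\to s}}$. The paper proves it through the comparison principle of Proposition~\ref{prop:uniqu}, which needs a global upper bound on $w$. That bound is supplied by truncating $w\wedge\sup_{B_n}|w|$ along trajectories confined to $B_n$, giving the inequality; time reversal and the cocycle rule then upgrade it to equality.

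Consequently the lift cannot be anchored at any fixed time. The paper integrates over all starting times against $\Theta=\int_0^T\delta_t\otimes\mu_t\diff t$. It pushes forward to the life-midpoint cross-section $\mfm(s,x)$ so that each trajectory is counted once, and reweights by $C^{-1}$ with $C(s,x)=\int_{I(s,x)}W_{s\to u}(x)\diff u$. Finite energy is then used to show that the curves hit the vertex at their birth and death times, so they are continuous in the cone. This birth/death mechanism is the missing idea.

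\textbf{Two further problems.} First, without the additive term $\eps\mssg$ your regularized fields need not satisfy even the local bounds required to run the flow theory. In the example, $\sup_B|v^\eps_t|=1+\tan^2 t$ is not integrable near $\pi/2$. Moreover $\mu^\eps_{\pi/2}=0$, so the quotients are undefined at that time.

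Second, your ODE functional $\big|x(t)-x(0)-\int_0^t\tilde v_r(x(r))\diff r\big|\,r^2$ is not well defined, let alone continuous, on $\mcC([0,T];\mfC[\R^d])$. It breaks down when $r_\gamma$ vanishes at $0$ or inside $(0,t)$, which is exactly the situation here. The paper instead uses the distributional functional built from $k_\gamma\phi(x_\gamma)$, which is continuous on the cone. It then recovers the ODE on $O_\gamma$ from a countable family of test functions.

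Your derivation of the energy identity is fine. Along solutions, $|\gamma'|^2$ equals $k_\gamma\big(|v|^2+\tfrac14|w|^2\big)(x_\gamma)$ on $O_\gamma$, and item~(2) then gives the identity.
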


We remark that the sole quantitative hypothesis in Theorem~\ref{thm:superconegen} is the finiteness of the quadratic energy of the solution~$\mu$ itself~\eqref{eq:pbdconegen}. The extension of the results~\cite{Man07, BreCarFanRom22} is not a mere technicality. As we discuss in Example~\ref{example:intro} below and in Lemma~\ref{le:global}, creation of mass forces~$w$ to be unbounded from above near the region where mass appears, so that no global bounds on~$w$ can be assumed if the equation is to be treated in its natural generality on~$\R^d$. Furthermore, creation and annihilation of mass are linked to finite-time blow-up of the first equation in~\eqref{eq:odegen}, which in turn implies that the support of~$\mu_{t}$ can not be restricted to a compact subset of~$\R^{d}$. This is a major difference with respect to~\cite{Man07, BreCarFanRom22}, where a priori bounds were exploited to ensure tightness and compactness in the classical approximation method of~\cite[Theorem~8.2.1]{AmbGigSav08}. In our setting the space component~$x_{\gamma}$ is not uniformly controlled, as it may blow-up in finite time, even for smooth fields~$v$ and~$w$. When this happens, however, $r_{\gamma}$ must tend (or be equal) to~$0$, so that $\gamma$ tends to the vertex of the cone~$\mfC[\R^{d}]$. Hence, we will see in Section~\ref{s:general} that compactness is restored working in~$\mfC[\R^{d}]$.

Before proceeding with the description of the proof of Theorem~\ref{thm:superconegen}, we present a simple example concerning the representation of a curve of measures $\mu \in \mcC([0, \pi]; \msM_{+} (\R))$ solution to the continuity equation with reaction~\eqref{eq:cerintro} with regular fields $v$ and $w$. The main feature of the following example is that the characteristics~\eqref{eq:odegen} suffer a blow-up in finite time, justifying our analysis.

\begin{example}
    \label{example:intro}
Let~$d=1$,~$T=\pi$ and set
\begin{align*}
\alpha(t) &\eqdef \sgn(\pi/2-t) \comma \quad && t \in [0,\pi] \comma
\\
v_t(x) &\eqdef \alpha(t)(1+x^2) \comma \quad w_t(x) \eqdef -\alpha(t)\frac{5}{\pi/2-\arctan(x)} \quad &&t \in [0,\pi] \comma x \in \R \comma
\\
m(t) & \eqdef\alpha(t)  \paren{\frac{\pi/2-t}{\pi/2}}^5\,, \qquad g(t) \eqdef \alpha(t) \tan (t) \qquad && t \in [0, \pi] \setminus \set{\pi/2}\,,\\
\mu_t &\eqdef m(t) \delta_{g(t)} \quad &&t \in [0,\pi] \fstop
\end{align*}
The maps~$v$ and $w$ are Borel,~$v_t \in \mcC^\infty(\R)$,~$w_t \in \mcC^\infty(\R)$ for every~$t \in [0,\pi]$. Since~$m$ and~$g$ are continuous in~$[0,\pi] \setminus \set{\pi/2}$ and, when~$t \to \pi/2^{\pm}$, even if~$g(t) \to \infty$, the weight~$m(t) \to 0$, we have that~$\mu \in \mcC([0,\pi]; \msM_+(\R))$. Furthermore, it is easy to see that 
\begin{align}
    \label{eq:w1locloc}
& \int_0^\pi \braket{ \sup_{B} |v_t| + \Lip(v_t, B) + \sup_{ B}|w_t| +\Lip(w_t, B) } \diff t < \infty \comma B \Subset \R \,.
\end{align}

We show that ($v,w,\mu)$ satisfies~\eqref{eq:cerintro} and~\eqref{eq:pbdconegen}. Noticing that~$\mu_t = \mu_{\pi-t}$,~$v_t=-v_{\pi-t}$,~$w_t= -w_{\pi-t}$ for every~$t \in [\pi/2,\pi]$, we can limit the rest of our analysis to the interval~$[0,\pi/2]$. The flow of~$v$ starting from~$0$ is
\[
X_t(0)= \tan(t) \comma \quad t \in [0, \pi/2)
\]
which is not globally defined. We show that~\eqref{eq:cerintro} in~$(0, \pi/2)$ is satisfied: let $\phi \in \mcC^\infty_c((0,\pi/2) \times \R)$ be fixed. We compute the derivative of the map
\[
[0,\pi/2) \ni t \mapsto \xi(t) \eqdef \phi(t, g(t)) m(t) = \phi(t,\tan(t))\paren{\frac{\pi/2-t}{\pi/2}}^5
\]
which is given by
\begin{align*}
\xi'(t) &=\paren{\partial_t \phi(t, \tan(t)) + \partial_x \phi(t, \tan(t))(1+\tan^2(t))} \paren{\frac{\pi/2-t}{\pi/2}}^5
\\
&\quad - \phi(t,\tan(t)) \frac{10}{\pi} \paren{\frac{\pi/2-t}{\pi/2}}^4
\\
& = \braket{ \partial_t \phi(t, \tan(t)) + \partial_x \phi(t, \tan(t))(1+\tan^2(t)) +\phi(t,\tan(t)) \frac{-5}{\pi/2-t} } \paren{\frac{\pi/2-t}{\pi/2}}^5
\\
& = \braket{ \partial_t \phi(t, \tan(t)) + \partial_x \phi(t, \tan(t))v_t(\tan(t)) +\phi(t,\tan(t)) w_t(\tan(t)) } \paren{\frac{\pi/2-t}{\pi/2}}^5
\\
& = \braket{ \partial_t \phi(t, g(t)) + \partial_x \phi(t, g(t))v_t(g(t)) +\phi(t,g(t)) w_t(g(t))} m(t) \fstop
\end{align*}
Integrating in~$[0,\pi/2]$ gives the desired result. We are only left to verify~\eqref{eq:pbdconegen} in~$(0, \pi/2)$: we compute
\begin{align*}
\int_0^{\pi/2} \int_{\R} \braket{ |v_t|^2 + |w_t|^2} \diff \mu_t \diff t = \int_0^{\pi/2} \braket{ \paren{1+\tan^2(t)}^2+ \paren{\frac{5}{\pi/2-t}}^2 } \paren{\frac{\pi/2-t}{\pi/2}}^5 \diff t \fstop
\end{align*}
Notice that the integrand function is continuous in~$[0,\pi/2)$ and it vanishes as~$t \uparrow \pi/2$. Therefore the integral is finite.

We remark that although the flow associated to~$v$ is not everywhere defined in $[0, \pi]$ and solutions starting at $x=0$ for $t=0$ blow up at $t = \pi/2$, the curve $\mu$ still satisfies the superposition principle Theorem~\ref{thm:superconegen}. In this example, mass gets destroyed and created at time~$t=\pi/2$ so that also the curves
\[
\nu_t^1 \eqdef \mu_t \car_{[0,\pi/2]}(t) + \bm{0} \car_{[\pi/2, \pi]}(t) \comma \quad  \nu^2_t \eqdef \bm{0} \car_{[0,\pi/2]}(t) + \mu_t \car_{[\pi/2, \pi]}(t) \comma \quad t \in [0,\pi]
\]
are solutions to the continuity equation with reaction driven by~$v$ and~$w$, where $\bm{0} $ denotes the zero measure in~$\R$. 
\end{example}

\medskip

\noindent{\bf Outline of the proof.} The proof of Theorem~\ref{thm:superconegen} follows the general steps of~\cite[Theorem~8.2.1]{AmbGigSav08} (see also~\cite{Man07, BreCarFanRom22}). Precisely,  we regularize the triple~$(v,w,\mu)$ by mollification and adding a small Gaussian regularization to keep the mollified measures strictly positive. This produces a smooth triple $(v^{\eps}, w^{\eps}, \mu^{\eps})$ solution to~\eqref{eq:cerintro} that satisfies only \emph{local} bounds (cf.~\eqref{eq:w1loccone}), inherited from the integrability~\eqref{eq:pbdconegen} of~$v,w$ against~$\mu$, and no global control on~$w^\eps$ survives. 

The crucial step is therefore to have at hand a representation theory valid under purely local assumptions on~$v^{\eps}$ and~$w^{\eps}$. The fundamental result, of independent interest, is a \emph{two-time representation formula} (see Lemma~\ref{le:twotime}) established under such local assumptions: denoting by~$X^{\eps}_{s\to t}$ the flow of~$v^{\eps}$, by
\[
W^{\eps}_{s\to t}(x)\eqdef\exp\bigg(\int_s^t w^{\eps}_r(X^{\eps}_{s\to r}(x))\diff r\bigg)
\]
the accumulated reaction weight, and by~$E^{\eps}_{s\to t}$ the set of points whose trajectory, alive at time~$s$, survives up to time~$t$, we prove that
\begin{equation}\label{eq:twotimeintro}
(X^{\eps}_{s\to t})_\sharp\big(W^{\eps}_{s\to t}\,\mu^{\eps}_s\restr{E^{\eps}_{s\to t}}\big)=\mu^{\eps}_t\restr{E^{\eps}_{t\to s}}\qquad\text{for every } s,t\in[0,T]\fstop
\end{equation}
The two-time formula~\eqref{eq:twotimeintro} is what makes it possible to work under local assumptions only, that is, without any global bound on~$w$. Indeed, when~$w^{\eps}$ is unbounded from above, as is unavoidable already at the level of the mollified fields, the characteristic flow of~$v^{\eps}$ need not be globally defined and mass may be created or annihilated at finite times (see Example~\ref{example:intro}). Consequently, neither uniqueness nor a representation \emph{anchored at the initial time}~$t=0$ can hold in general. Formula~\eqref{eq:twotimeintro} circumvents this obstruction by comparing~$\mu^{\eps}_s$ and~$\mu^{\eps}_t$ directly along the flow between any two times, on the sets~$E^{\eps}_{s\to t}$ where the trajectories are actually defined, without ever referring to a fixed reference time. The proof of Lemma~\ref{le:twotime} rests on a comparison principle for sub- and supersolutions of~\eqref{eq:cerintro}, which we establish in Proposition~\ref{prop:uniqu} and Corollary~\ref{cor:comp}. Then, in Theorem~\ref{thm:supercone} we prove an intermediate superposition principle under local Lipschitz assumptions~\eqref{eq:w1loccone} on~$v^{\eps}$ and~$w^{\eps}$, where the flow~$X_{s\to t}^\eps$ and the weight~$W_{s\to t}^\eps$ are well-defined and~\eqref{eq:twotimeintro} can be exploited directly. The lift of~$\mu^{\eps}$ to the cone is built by selecting, along each trajectory, a reference time at its ``life-midpoint'', which turns the two-time formula into the desired marginal identity (see Theorem~\ref{thm:superconegen}(2) and Theorem~\ref{thm:supercone}(2)). A tightness argument on the cone, based on the energy bound~\eqref{eq:pbdconegen}, yields a limit measure~$\eta$ that represents the original~$\mu$. The fact that~$\eta$ is concentrated on genuine solutions to the characteristic system is recovered from the weak formulation by a density argument in the test functions, thus completing the proof of Theorem~\ref{thm:superconegen}. We finally complement Theorem~\ref{thm:superconegen} with the converse implication in Proposition~\ref{prop:converse}: any~$\eta$ concentrated on solutions of the characteristic system produces, through~$h^2_t$, a solution of~\eqref{eq:cerintro}, so that~\eqref{eq:cerintro} and its Lagrangian formulation on the cone are fully equivalent.

\medskip

\noindent{\bf Perspectives.} We expect the two-time representation formula and the ensuing superposition principle to be useful in several directions: the analysis of gradient flows and evolution variational inequalities in the (spherical) Hellinger--Kantorovich space, uniqueness and stability of measure-valued solutions to~\eqref{eq:cerintro}, the study of dynamic inverse problems with unbalanced optimal-transport regularization~\cite{BreCarFanRom22,BreFan2020}, and the probabilistic representation of Markov processes with killing and branching associated with~\eqref{eq:cerintro}. The superposition principle of Theorem~\ref{thm:superconegen} may be further extended to more general state spaces, replacing for instance $\R^{d}$ with a separable Banach space. This may pave the way to the analysis of multi-agent multi-label systems with reaction, in the spirit of~\cite{Ambrosio-Fornasier-Morandotti-Savare21, Morandotti-Solombrino}, and to related control problems~\cite{Albi-Almi-Morandotti-Solombrino, Cavagnari-Lisini-Orrieri-Savare22, Fornasier-Lisini-Orrieri-Savare19}.

\medskip

\noindent{\bf Plan of the paper.} Section~\ref{s:preliminaries} collects the notation and the basic facts on curves and measures in the geometric cone. Section~\ref{s:comparison} establishes the comparison principle for~\eqref{eq:cerintro} under local, one-sided assumptions on~$w$. In Section~\ref{s:repr} we prove the two-time representation formula~\eqref{eq:twotimeintro} (see Lemma~\ref{le:twotime}) and the superposition principle under local Lipschitz bounds (Theorem~\ref{thm:supercone}). In Lemma~\ref{le:global} we further discuss how global bounds on~$w$ imply a representation anchored at the initial time. Finally, Section~\ref{s:general} is devoted to the proof of the general superposition principle Theorem~\ref{thm:superconegen} and to its converse Proposition~\ref{prop:converse}.

\section{Preliminaries and notation}
\label{s:preliminaries} 
In the whole paper,~$d \in \N_+$ is a positive natural number and~$T \in (0,\infty)$ is a finite time horizon. 

Given a measure space~$(\Omega, \mcH, \mssm)$ and a subset~$E$ of a Banach space~$(\mathbb B, \| \cdot \|_{\mathbb{B}})$, for~$p \in [1,\infty)$, we denote by~$L^p(\Omega, \mssm; E)$ the space of (equivalence classes of)~$p$-integrable~$E$-valued functions defined on~$\Omega$; when~$E=\R$, we omit it from the notation; when, in addition,~$\Omega \subset \R$, it is understood that the underlying measure is the one dimensional Lebesgue measure, and we simply write~$L^p(\Omega)$.

Given a complete and separable metric space~$(X,\mssd)$ and a metric space $(Y, \delta)$, we denote by~$\mcC(X; Y)$ (resp.~$\mcC_b(X; Y)$, $\mcC_c(X; Y)$) the space of continuous (resp.~continuous and bounded, continuous with compact support) $Y$-valued functions on~$X$. When~$Y=\R$, we omit it from the notation. If~$X\subset \R^d$, we add the superscript~$\infty$ to denote infinitely differentiable functions. The space $\mcC(X;Y)$ and its subsets are always endowed with the uniform distance induced by the distance $\delta$ on $Y$.

We denote by~$\msM(X)$ (resp.~$\msM_+(X)$) the space of real valued (resp.~non-negative and finite) Borel measures on~$X$, and by~$\mcP(X) \subset \msM_+(X)$ the subset of probability measures on~$X$. We endow~$\msM(X)$ and its subspaces with the narrow topology i.e.~the coarsest topology on~$\msM(X)$ which makes continuous the maps
\[
\msM(X) \ni \mu \mapsto \int_X \phi \diff \mu \comma \quad \phi \in \mcC_b(X) \fstop
\]
The total variation of a measure~$\mu \in \msM(X)$ is denoted by~$|\mu| \in \msM_+(X)$; if~$B \subset X$ is Borel, then~$\mu \restr{B}$ denotes the restriction of~$\mu$ to~$B$; the zero measure in~$X$ is denoted by~$\bm{0}$. For any interval $J \subseteq \R$, we will denote by $\mcC(J; \msM (X))$ (resp.~$\mcC(J; \msM_{+} (X))$) the set of continuous functions from $J$ to~$\msM (X)$ (resp.~$\msM_{+} (X)$) with respect to the narrow topology.

If~$f \colon X \to \R^m$,~$m\in \N_+$, and~$B \subset X$, then~$\Lip(f,B)$ denotes the Lipschitz constant of~$f$ on~$B$ i.e.
\[
\Lip(f,B) \eqdef \sup \set{ \frac{|f(x)-f(y)|}{\mssd(x,y)} : x, y \in B, \, x \ne y} \fstop
\]
When~$m=1$, the positive part of~$f$ is~$f^+ \eqdef f \vee 0$, and the negative part of~$f$ is~$f^- \eqdef -f \vee 0$, so that~$f=f^+-f^-$.

Recall that for an interval~$J \subset \R$, a continuous map~$\gamma \in \mcC(J; X)$ is called~$p$-absolutely continuous,~$p \in [1,\infty)$, if there exists~$g \in L^p(J)$ such that
\[
\mssd(\gamma_s, \gamma_t) \le \int_s^t g(r) \diff r \comma \quad s \le t, \, s,t \in J \fstop
\]
The set of such curves is denoted by~$\mcA\mcC^p(J; X)$ and we recall that it is a Borel subset of~$\mcC(J;X)$. The smallest such~$g$ is the metric derivative of~$\gamma$ and it is denoted by~$|\gamma'|$. We say that~$\gamma \in \mcA \mcC_{\loc}^p(J; X)$ if~$\gamma \in \mcA \mcC^p(I;X)$ for every compact subinterval~$I \subset J$.

For any set~$\Omega$, the function~$\car_\Omega$ is the characteristic function of~$\Omega$, equal to~$1$ on~$\Omega$ and~$0$ outside it. Finally, for~$R \in (0,\infty)$, we set
\[
B_R(0) \eqdef \set{ x \in \R^d : |x|<R} \fstop
\]

\subsection{Curves and measures in the geometric cone} 
\label{ssec:curvcone}

Recall that the \emph{geometric cone} over~$\R^d$ is the quotient
\[
\mfC[\R^d] \eqdef \big( \R^d \times [0,\infty) \big)\big/\big( \R^d \times \set{0} \big) \comma
\]
whose points are denoted~$[x,r]$, with~$x \in \R^d$ and~$r \ge 0$. All the points with~$r=0$ are identified with a single vertex~$\mfo$. We endow~$\mfC[\R^d]$ with the cone distance
\begin{equation}\label{eq:conedist}
\mssd_{\mfC}\big( [x_0,r_0], [x_1,r_1] \big)^2 \eqdef r_0^2 + r_1^2 - 2 r_0 r_1 \cos\big( |x_0-x_1| \wedge \pi \big) = |r_0-r_1|^2 + 4r_0 r_1 \sin^2 \paren{\frac{|x_0-x_1| \wedge \pi}{2}}\fstop
\end{equation}
For~$\lambda>0$ we further define~$\mfd_\lambda \colon \mfC[\R^d] \to \mfC[\R^d]$ as~$\mfd_\lambda([x,r]) \eqdef [x,\lambda r]$. 

We denote by $\cic \eqdef \mcC([0, T]; \mfC[\R^{d}])$ the space of continuous curves from $[0, T]$ with values in $\mfC[\R^{d}]$ and by~$\acic \eqdef\mathcal{AC}^2([0,T]; \mfC[\R^d])$ its subset of absolutely continuous curves with metric derivative in~$L^2([0,T])$. Every~$\gamma \in \cic$ is written as~$\gamma_t = [x_\gamma(t), r_\gamma(t)]$, where~$r_\gamma(t) \ge 0$ and~$x_\gamma(t) \in \R^d$ is uniquely determined whenever~$r_\gamma(t)>0$. We endow $\cic$ with the uniform distance induced by the cone distance~\eqref{eq:conedist}. 
For~$\gamma \in \cic$ we set
\begin{equation}\label{eq:notcurves}
O_\gamma \eqdef r_\gamma^{-1}\big( (0,\infty) \big) \comma \qquad \Sigma(\gamma) \eqdef \max_{t \in [0,T]} r_\gamma(t) \comma \qquad k_\gamma \eqdef r^2_\gamma \fstop
\end{equation}

We recall the following characterization of absolutely continuous curves in the cone, see~\cite[Lemma~8.1]{LieMieSav18}.

\begin{lemma}[Absolutely continuous curves in the cone]\label{le:accurves}
A curve~$\gamma \in \cic$ belongs to~$\acic$ if and only if~$x_\gamma \in \mathcal{AC}_{\loc}^2(O_\gamma; \R^d)$ and $r_\gamma \in \mathcal{AC}^2([0,T]; [0, \infty))$ with~$r_\gamma |x_\gamma'| \in L^2(O_\gamma)$. In this case, it holds
\begin{equation}\label{eq:metricderivative}
|\gamma'|^2(t) = \car_{O_\gamma}(t) \braket{| r_\gamma'|^2(t) + r^2_\gamma(t)\, |x_\gamma'|^2(t)} \qquad \text{for a.e.~} t \in [0,T] \fstop
\end{equation}
\end{lemma}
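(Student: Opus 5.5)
The plan is to reduce everything to the geometry of the cone through the local chart away from the vertex, and then to handle the vertex set $r_\gamma^{-1}(0)$ separately.

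\emph{Sufficiency.} Assume $r_\gamma \in \mathcal{AC}^2([0,T])$, $x_\gamma \in \mathcal{AC}^2_{\loc}(O_\gamma;\R^d)$ and $r_\gamma|x_\gamma'| \in L^2(O_\gamma)$. Set $g \eqdef \car_{O_\gamma}\bigl(|r_\gamma'|^2 + r_\gamma^2|x_\gamma'|^2\bigr)^{1/2}$, which lies in $L^2(0,T)$. I would show $\mssd_{\mfC}(\gamma_s,\gamma_t) \le \int_s^t g\diff r$ in two steps.
\begin{enumerate}
\item If $[s,t] \subset O_\gamma$, use the second expression in~\eqref{eq:conedist} together with $\sin^2(a\wedge\pi/2) \le a^2/4$. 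This gives locally
\[
\mssd_{\mfC}(\gamma_s,\gamma_t)^2 \le |r_s-r_t|^2 + r_sr_t|x_s-x_t|^2 .
\]
Equivalently, the cone metric restricted to $\{r>0\}$ is dominated by the length structure of the Riemannian metric $\diff r^2 + r^2|\diff x|^2$. Subdividing $[s,t]$ finely, summing, and passing to the limit with the triangle inequality yields the desired bound, by continuity of $r$ and $x$ on $[s,t]$.
\item For general $s<t$, let $Z \eqdef r_\gamma^{-1}(0)\cap[s,t]$. If $Z$ is nonempty, take $s' = \min Z$ and $t' = \max Z$. Then
\[
\mssd_{\mfC}(\gamma_s,\gamma_t) \le \mssd_{\mfC}(\gamma_s,\mfo) + \mssd_{\mfC}(\mfo,\gamma_t) = r_s + r_t .
\]
Each of $r_s, r_t$ is bounded by $\int |r'|$ over $[s,s']$ and $[t',t]$, intervals that lie in $O_\gamma$ up to their endpoints, and this is bounded by $\int g$. (Note $|r'|=0$ a.e. on $Z$, so nothing is lost there.)
\end{enumerate}
Hence $\gamma \in \acic$ and $|\gamma'| \le g$ a.e.

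\emph{Necessity.} Assume $\gamma \in \acic$. First, $|r_s - r_t| \le \mssd_{\mfC}(\gamma_s,\gamma_t)$, because this is the $\cos=1$ lower bound in~\eqref{eq:conedist}. So $r_\gamma \in \mathcal{AC}^2$ with $|r'| \le |\gamma'|$. Next, on a compact interval $I \subset O_\gamma$ we have $r \ge c > 0$, and for close times $|x_s - x_t| < \pi$. Then
\[
4r_sr_t\sin^2\!\bigl(|x_s-x_t|/2\bigr) \le \mssd_{\mfC}^2 \quad\text{and}\quad \sin(a/2) \ge a/\pi \text{ on } [0,\pi],
\]
so $|x_s - x_t| \le C\,\mssd_{\mfC}(\gamma_s,\gamma_t)$. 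This gives $x_\gamma \in \mathcal{AC}^2_{\loc}(O_\gamma)$.

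\emph{The metric derivative.} This is the step I expect to be the main obstacle. At a point $t \in O_\gamma$ where $r$, $x$ and $\gamma$ are differentiable, expand $\mssd_{\mfC}(\gamma_{t+h},\gamma_t)^2/h^2$ using $\sin^2(a/2) = a^2/4 + o(a^2)$. The limit is $|r'|^2 + r^2|x'|^2$, which gives~\eqref{eq:metricderivative} a.e. on $O_\gamma$. This pointwise identity is what forces $r_\gamma|x_\gamma'| \le |\gamma'| \in L^2$.

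On the complement $r^{-1}(0)$, almost every point is a density point where $r' = 0$. Since $\mssd_{\mfC}(\gamma_{t+h},\mfo) = r_{t+h}$, we get $|\gamma'|(t) = \lim_h r_{t+h}/|h| = |r'(t)| = 0$. This matches the factor $\car_{O_\gamma}$ in~\eqref{eq:metricderivative}.

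Combined with the upper bound $|\gamma'| \le g$ from the sufficiency part, this closes both the equivalence and formula~\eqref{eq:metricderivative}. The delicate points are making the local expansion uniform enough to identify the metric derivative, and handling the possibly Cantor-like vertex set $Z$; both are resolved by the arguments above.
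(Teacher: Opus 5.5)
Your proof is correct. The paper does not prove this lemma itself; it imports it from \cite[Lemma~8.1]{LieMieSav18}. What you give is therefore a self-contained replacement, built from three ingredients:
\begin{enumerate}
\item a two-sided comparison of $\mssd_{\mfC}$ with $|r_s-r_t|$ and $\sqrt{r_sr_t}\,|x_s-x_t|$ on compact subintervals of $O_\gamma$, with the upper bound from $\sin z\le z$ and the lower bound from $\sin z\ge 2z/\pi$ on $[0,\pi/2]$;
\item a passage through the vertex whenever $[s,t]$ meets $r_\gamma^{-1}(0)$;
\item a pointwise Taylor expansion to identify the metric derivative.
\end{enumerate}

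A few points should be written out more carefully.

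\textbf{Step~1 of sufficiency.} The inequality you need is $\sin^2\bigl((a\wedge\pi)/2\bigr)\le a^2/4$; what you wrote is a small slip. More importantly, the limit along partitions uses the absolute continuity of $x_\gamma$ on $[s,t]$, not just its continuity. Let $\omega$ be the oscillation of $r_\gamma$ at the mesh scale. Each term is then bounded by $\int_{t_i}^{t_{i+1}}\bigl(|r_\gamma'|^2+(r_\gamma+\omega)^2|x_\gamma'|^2\bigr)^{1/2}$. Summing leaves an error $\omega\int_s^t|x_\gamma'|$, which vanishes only because $x_\gamma\in\mathcal{AC}([s,t];\R^d)$.

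\textbf{Necessity.} The claim that $|x_s-x_t|<\pi$ for close times follows from the same lower bound. Indeed, $|x_s-x_t|\ge\pi$ would force $\mssd_{\mfC}(\gamma_s,\gamma_t)\ge 2c$, which uniform continuity of $\gamma$ rules out.

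\textbf{Final step.} The appeal to $|\gamma'|\le g$ at the end is unnecessary. Your pointwise computation already gives \eqref{eq:metricderivative} for every $\gamma\in\acic$. On $r_\gamma^{-1}(0)$ you also do not need density points: $r_\gamma'(t)=0$ at every interior differentiability point simply because $r_\gamma\ge0$ attains its minimum there.
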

For~$t \in [0,T]$ we define the map~$h_t^2 \colon \msM_+(\cic) \to \msM_+(\R^d)$ by
\begin{equation}\label{eq:htdef}
\int_{\R^d} \phi \diff h_t^2(\eta) \eqdef \int_{\cic} \phi(x_\gamma(t))\, r^2_\gamma(t) \diff \eta(\gamma) \comma \quad \phi \in \Cb(\R^d) \fstop
\end{equation}
For any Borel map~$\lambda \colon \cic \to (0,\infty)$ we define $\mfD_\lambda \colon \cic \to \cic$ by
\[
\mfD_\lambda(\gamma) \eqdef \mfd_{\lambda(\gamma)} \circ \gamma\,.
\]
Finally, for every $\eta \in \msM_+\big(\cic\big)$ we set 
\begin{align}
    \label{e:Rlambda}
\mcR_\lambda \eta \eqdef (\mfD_\lambda)_\sharp \big( \lambda^{-2} \eta \big) \in \msM_+\big(\cic\big)\,.
\end{align}

The following lemma  is a simple consequence of the structure of dilations in the cone, see e.g.~\cite[eq.~(2.16), (2.17)]{DePSodTam25}.
\begin{lemma}[Renormalization]\label{le:dilation}
For every $\eta \in \msM_+\big(\cic\big)$, every Borel map $\lambda\colon \cic \to (0,\infty)$, and every~$t \in [0,T]$ we have~$h^2_t(\mcR_\lambda\eta) = h^2_t(\eta)$. Moreover, for every Borel~$\mcU \colon \cic \to [0,\infty]$ such that
\begin{align}
\label{e:homogeneity}
\mcU(\mfD_\lambda\gamma) = \lambda^2(\gamma)\mcU(\gamma)
\end{align}
we have~$ \int_{\mcC_{T}} \mcU \diff (\mcR_\lambda\eta) = \int_{\mcC_{T}}  \mcU \diff\eta$.
\end{lemma}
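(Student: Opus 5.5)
The plan is to verify both claims of the lemma directly from the definitions, by change of variables under the push-forward by $\mfD_\lambda$. The only structural input is the pointwise behaviour of a dilated curve. For $\gamma\in\cic$ and $\lambda=\lambda(\gamma)>0$ we have $\mfD_\lambda\gamma=[x_\gamma(\cdot),\lambda(\gamma) r_\gamma(\cdot)]$. Hence $r_{\mfD_\lambda\gamma}(t)=\lambda(\gamma)r_\gamma(t)$, and $x_{\mfD_\lambda\gamma}(t)=x_\gamma(t)$ whenever $r_\gamma(t)>0$. When $r_\gamma(t)=0$ the position is not determined, but there the weight $r^2$ vanishes. We also need that $\mfD_\lambda$ is Borel, so that the push-forward makes sense. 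This follows because $(\gamma,\lambda)\mapsto \mfd_\lambda\circ\gamma$ is continuous from $\cic\times(0,\infty)$ to $\cic$: indeed $\mssd_{\mfC}(\mfd_\lambda p,\mfd_{\lambda'}q)\le |\lambda-\lambda'|\,r_p+\lambda'\mssd_{\mfC}(p,q)$, using that $\mssd_{\mfC}$ is $1$-homogeneous under dilation, as seen from \eqref{eq:conedist}. Composing this continuous map with the Borel map $\gamma\mapsto(\gamma,\lambda(\gamma))$ gives Borel measurability of $\mfD_\lambda$.

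For the first claim, fix $t$ and $\phi\in\Cb(\R^d)$. By the definition \eqref{e:Rlambda} of $\mcR_\lambda$ and the change-of-variables formula for push-forwards,
\[
\int_{\R^d}\phi\diff h^2_t(\mcR_\lambda\eta)=\int_{\cic}\phi(x_{\mfD_\lambda\gamma}(t))\,r^2_{\mfD_\lambda\gamma}(t)\,\lambda(\gamma)^{-2}\diff\eta(\gamma).
\]
The integrand equals $\phi(x_\gamma(t))\lambda^2 r_\gamma^2(t)\lambda^{-2}=\phi(x_\gamma(t))r_\gamma^2(t)$, and it is $0$ both before and after dilation when $r_\gamma(t)=0$. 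This gives $\int\phi\diff h^2_t(\eta)$. Since the measures agree on all of $\Cb(\R^d)$, they coincide. Here we use the convention that the integrand $\phi(x_\gamma(t))r^2_\gamma(t)$ is well defined on $\cic$ as a bounded-on-sublevels Borel function, extended by $0$ at the vertex.

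For the second claim, the same change of variables gives
\[
\int\mcU\diff\mcR_\lambda\eta=\int\mcU(\mfD_\lambda\gamma)\lambda(\gamma)^{-2}\diff\eta(\gamma).
\]
By \eqref{e:homogeneity} this equals $\int\mcU\diff\eta$. Both sides are integrals of nonnegative $[0,\infty]$-valued Borel functions, so no integrability is needed and the identity holds in $[0,\infty]$.

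The only point requiring care is the measurability of $\mfD_\lambda$ and the convention at the vertex, where $x_\gamma(t)$ is undefined. I expect this to be the main, though mild, obstacle, and it is handled by the weight $r^2$ vanishing there. Everything else is a direct computation.
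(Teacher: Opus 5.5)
Your proof is correct. It is the direct change-of-variables computation that the paper leaves implicit: the paper gives no argument of its own and only calls the lemma a simple consequence of the structure of dilations in the cone, with a reference. Your verification that $(\gamma,\lambda)\mapsto\mfd_\lambda\circ\gamma$ is continuous, which gives Borel measurability of $\mfD_\lambda$, is a welcome detail.

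One point is implicit in the statement itself: $\lambda^{-2}\eta$ must be finite for $\mcR_\lambda\eta$ to lie in $\msM_+(\cic)$. This holds in the paper's uses, $\lambda=1/\Sigma$ and $\lambda$ constant. Your pointwise identity covers the general case anyway if you test with nonnegative Borel $\phi$ rather than all of $\Cb(\R^d)$.
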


We now briefly discuss a compactness criterion in~$ \cic$. 
We define the~$2$-action functional $\mcF \colon \mcC_{T} \to [0, +\infty]$ as
\begin{equation}\label{eq:action}
\mcF(\gamma) \eqdef \begin{cases}
 \displaystyle \int_0^T |\gamma'|^2(t) \diff t \quad & \text{ if } \gamma \in \acic \comma \\[2mm]
 \infty \quad & \text{ else} \fstop
\end{cases}
\end{equation}
Given $\zeta \in \mcC(\R^{d}; [1, \infty))$ we define
\begin{align}
    \label{e:Gzeta}
\mcG_{\zeta} (\gamma) \eqdef \int_0^T k_\gamma(t)\, \zeta\big( x_\gamma(t) \big) \diff t \comma \quad  \gamma \in \cic \fstop
\end{align}

\begin{lemma}[Compactness in the cone]\label{le:conecompact}
Let~$\zeta \in \mcC(\R^{d}; [1, \infty))$. 
Then~$\mcF$ and~$\mcG_{\zeta}$ are lower semicontinuous. Moreover, if $\zeta$ has compact sublevel sets, then for every~$c>0$ the set
\[
\mcK_c \eqdef \set{ \gamma \in \cic : \Sigma(\gamma) \le 1, \ \mcF(\gamma) \le c, \ \mcG_{\zeta}(\gamma) \le c }
\]
is compact.
\end{lemma}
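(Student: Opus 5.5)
Lower semicontinuity of $\mcF$ is the standard fact that the $2$-action of curves in a complete metric space is lower semicontinuous under uniform convergence. One way to see it uses the discrete characterization
\[
\mcF(\gamma)=\sup\Big\{\sum_i \frac{\mssd_{\mfC}(\gamma_{t_{i}},\gamma_{t_{i+1}})^2}{t_{i+1}-t_i} : 0=t_0<\dots<t_N=T\Big\},
\]
which holds for $\gamma\in\acic$ and gives $+\infty$ otherwise. This is a supremum of continuous functionals on $\cic$. For $\mcG_\zeta$, I will note that the integrand $\gamma\mapsto k_\gamma(t)\zeta(x_\gamma(t))$ is lower semicontinuous on $\mfC[\R^d]$ when extended by $0$ at the vertex $\mfo$. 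It is nonnegative, continuous away from $\mfo$, and equal to $0$ at $\mfo$. Lower semicontinuity at $\mfo$ is therefore trivial. Fatou's lemma in $t$ then gives lower semicontinuity of $\mcG_\zeta$ along uniformly convergent sequences.

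For compactness, $\mcK_c$ is closed by the first part, since $\Sigma$ is continuous. It therefore suffices to apply Arzelà--Ascoli in $\cic$. Equicontinuity follows from $\mssd_{\mfC}(\gamma_s,\gamma_t)\le \sqrt{c}\,|t-s|^{1/2}$. For pointwise relative compactness, I need the set $\{\gamma_t:\gamma\in\mcK_c\}$ to be relatively compact in $\mfC[\R^d]$ for each $t$. The bound $r_\gamma\le 1$ alone is not enough, because $x_\gamma(t)$ could escape to infinity. The main step is to show that if $r_\gamma(t)\ge\delta>0$, then $|x_\gamma(t)|$ is bounded by a constant $R(\delta,c)$.

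To do this, I will use Hölder continuity. If $r_\gamma(t)\ge\delta$, then $r_\gamma\ge\delta/2$ on an interval $I$ of length $\tau=\min(T,\delta^2/(4c))$ containing $t$, because $|r_\gamma(s)-r_\gamma(t)|\le\mssd_{\mfC}(\gamma_s,\gamma_t)$. On $I$, Lemma~\ref{le:accurves} gives $\int_I |x_\gamma'|\le (\delta/2)^{-1}\sqrt{\tau c}\le 1$ (after possibly shrinking $\tau$), so $x_\gamma$ oscillates by at most $1$ on $I$. Then
\[
c\ge\mcG_\zeta(\gamma)\ge \tau\,(\delta/2)^2\min_{\overline B_1(x_\gamma(t))}\zeta.
\]
Since $\zeta$ has compact sublevel sets, this forces $x_\gamma(t)\in \{\zeta\le 4c/(\tau\delta^2)\}+B_1$, which is a bounded set.

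Pointwise precompactness now follows. Given a sequence $\gamma^n_t$, either $r_{\gamma^n}(t)\to0$ along a subsequence, in which case $\gamma^n_t\to\mfo$, or $r_{\gamma^n}(t)\ge\delta$ along a subsequence, in which case the $x$-components are bounded and a subsequence converges in $\R^d\times[\delta,1]$. I will also check that the quotient map $\R^d\times[0,\infty)\to\mfC[\R^d]$ is continuous for $\mssd_{\mfC}$. This is immediate from~\eqref{eq:conedist}, which carries all the convergence. Arzelà--Ascoli then yields compactness of $\mcK_c$. I expect the localization estimate for $x_\gamma(t)$ in the previous paragraph to be the only delicate point.
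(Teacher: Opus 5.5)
Your proposal is correct and follows essentially the same route as the paper: lower semicontinuity by a standard argument plus Fatou, closedness plus Arzel\`a--Ascoli with the $\sqrt{c}\,|t-s|^{1/2}$ equicontinuity bound, and the localization of $x_\gamma(t)$ on an interval of length $\min\{T,\delta^2/(4c)\}$ where $r_\gamma\ge\delta/2$, using the bounds on $\int\zeta(x_\gamma)$ and $\int|x_\gamma'|^2$ there. The differences are cosmetic. You prove directly the pointwise compactness criterion in the cone that the paper cites from Liero--Mielke--Savar\'e. You also bound the oscillation of $x_\gamma$ first and then minimize $\zeta$ over a unit ball, whereas the paper first picks a point $s_0$ where $\zeta(x_\gamma(s_0))$ is below average and then bounds $|x_\gamma(t)-x_\gamma(s_0)|$; your ``possible shrinking'' of $\tau$ is not even needed.
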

\begin{proof}
 Lower semicontinuity of~$\mcF$ is classical, see e.g.~\cite[Theorem 10.2]{AmbBruSem24}. As for~$\mcG_{\zeta}$, its lower semicontinuity follows by Fatou's lemma and the lower semicontinuity of the map~$\mfC[\R^d] \ni [x,r] \mapsto r^2 \zeta(x) \in [0,\infty)$.

Let us show that $\mcK_{c}$ is compact for $c>0$. Since~$\Sigma$ is continuous, $\mcK_c$ is closed and it suffices to prove that it is relatively compact. By the Cauchy--Schwarz inequality,
\begin{equation}\label{eq:holdercone}
\mssd_\mfC(\gamma_t,\gamma_s) \le \int_s^t |\gamma'|(\tau) \diff\tau \le \sqrt{c}\, |t-s|^{1/2} \comma \qquad s,t \in [0,T] \comma \, \gamma \in \mcK_c \comma
\end{equation}
so that~$\mcK_c$ is uniformly equicontinuous. By the Ascoli--Arzel\`a theorem it is then enough to check that, for every~$t\in[0,T]$, the set~$\set{\gamma_t : \gamma \in \mcK_c}$ is relatively compact in~$\mfC[\R^d]$. We recall that, by~\cite[Lemma 7.1]{LieMieSav18}, a subset~$A \subset \mfC[\R^d]$ is relatively compact as soon as 
\begin{equation}\label{eq:conerelcpt}
\sup\set{ r : [x,r] \in A } < \infty \quad \text{and} \quad \set{ x : [x,r] \in A, \ r \ge \delta } \text{ is bounded for every } \delta>0 \fstop
\end{equation}
 The first condition for~$\set{\gamma_t : \gamma \in \mcK_c}$ holds because~$\Sigma(\gamma)\le1$. As for the second one, we fix~$\delta \in (0,1]$,~$t \in [0,T]$ and we show that there exists a constant~$K>0$ (depending on~$\zeta, T, c, \delta$) such that, whenever~$\gamma \in \mcK_c$ is such that~$r_\gamma(t) \ge \delta$, then~$|x_\gamma(t)| \le K$. Let such a~$\gamma$ be given; by~\eqref{eq:holdercone} and~\eqref{eq:conedist} we have~$r_\gamma \ge \delta/2$ on~$J \eqdef [t-\varsigma,t+\varsigma] \cap [0,T]$, where~$\varsigma \eqdef \delta^2/(4c)$; note that~$|J| \ge \ell \eqdef \min\set{\varsigma, T}>0$. Since~$k_\gamma \ge \delta^2/4$ on~$J$, we get
\[
\int_J \zeta\big(x_\gamma(s)\big) \diff s \le \frac{4}{\delta^2}\, \mcG_{\zeta}(\gamma) \le \frac{4c}{\delta^2} \comma \qquad \int_J |x_\gamma'|^2(s) \diff s \le \frac{4}{\delta^2} \int_J r^2_\gamma(s) |x_\gamma'|^2(s) \diff s \le \frac{4c}{\delta^2} \comma
\]
where we used the definitions of~$\mcF$ and~$\mcG_{\zeta}$. By the first bound there is~$s_0 \in J$ with~$\zeta(x_\gamma(s_0)) \le 4c/(\delta^2 \ell)$, i.e.~$x_\gamma(s_0) \in K_\delta \eqdef \set{ \zeta \le 4c/(\delta^2\ell) }$, which is compact, hence contained in some ball of radius~$R>0$; note that~$R$ depends only on~$\zeta, T, c, \delta$. By the second bound, the fact that~$J \subset O_\gamma$, and the Cauchy--Schwarz inequality,
\[
\big| x_\gamma(t) - x_\gamma(s_0) \big| \le \int_J |x_\gamma'| \diff s \le |J|^{1/2} \paren{ \frac{4c}{\delta^2} }^{1/2} \le \frac{2\sqrt{cT}}{\delta} \fstop
\]
Hence~$|x_\gamma(t)| \le K\eqdef R+ \frac{2\sqrt{cT}}{\delta}$, which is the required bound.
\end{proof}

\section{The comparison principle for the continuity equation with reaction}\label{s:comparison}

The goal of this section is to prove the following comparison principle for the continuity equation with reaction. We remark that the comparison principle contained in Proposition~\ref{prop:uniqu} is a slightly more general version than the one in~\cite[Lemma 3.5]{Man07}, since we only require a global upper bound for~$w$. 

\begin{proposition}[Comparison for the continuity equation with reaction]
\label{prop:uniqu}
Let~$v \colon [0,T] \times \R^d \to \R^d$,~$w \colon [0,T] \times \R^d \to \R$ be Borel maps, and let~$\sigma \in \mcC([0,T]; \msM(\R^d))$ be satisfying the following assumptions:
\begin{align} \label{eq:w1loc}
&\int_0^T \braket{ \sup_{B} |v_t| + \Lip(v_t, B) + \sup_{B}w_t^- + \sup_{\R^d} w_t^+ + \Lip(w_t, B) } \diff t < \infty \comma B \Subset \R^d \comma
\\ \label{eq:cer}
& \vphantom{\int_{0}^{T}} \partial_t \sigma + \div ( v \sigma) \le w\sigma \text{ in } \mcD'((0,T) \times \R^d) \comma 
\\ \label{eq:pbd}
& \int_0^T \int_{\R^d} \braket{|v_t| + |w_t|} \diff |\sigma_t| \diff t < \infty \fstop
\end{align}
Moreover, assume that
\begin{align}
    \label{e:extra-assumption}
    \sigma_0 \le 0 \qquad \text{and} \qquad \sup_{t \in[0,T]} |\sigma_t|(B) < \infty \text{ for every~$B \Subset \R^d$.}
\end{align}
Then $\sigma_t \le 0$ for every $t \in [0,T]$.
\end{proposition}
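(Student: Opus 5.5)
We argue by duality, testing the inequality~\eqref{eq:cer} against nonnegative solutions of the backward adjoint equation $\partial_t\phi + v\cdot\nabla\phi + w\phi = 0$, built from the characteristics. Fix $\bar t\in(0,T]$ and $\psi\in\mcC^\infty_c(\R^d)$ with $\psi\ge0$. The goal is $\int\psi\diff\sigma_{\bar t}\le 0$.

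\textbf{Step 1: regularization.} Mollify $v,w$ in space (and time) to obtain $v^\eps,w^\eps$. These are smooth and still satisfy the local bounds~\eqref{eq:w1loc} uniformly in $\eps$ on each compact set, with $\sup w^{\eps,+}_t\le \sup w^+_t$. Since $v$ is locally bounded but not globally, the flow $X^\eps_{t\to\bar t}$ may fail to be global. To handle this we cut off: let $\chi_R\in\mcC^\infty_c$ with $0\le \chi_R\le1$ and $\chi_R=1$ on $B_R$, and replace $v^\eps$ by $\chi_R v^\eps$. The modified field is globally Lipschitz in space with an $L^1$-in-time constant, so its flow $X^{\eps,R}$ is globally defined. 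Set
\[
\phi(t,x)\eqdef \psi(X^{\eps,R}_{t\to\bar t}(x))\exp\Big(\int_t^{\bar t} w^\eps_r(X^{\eps,R}_{t\to r}(x))\diff r\Big).
\]
This function is nonnegative, Lipschitz, and compactly supported in $x$, because $\psi$ has compact support and the truncated flow moves points only within a bounded region. It is bounded by $\|\psi\|_\infty\exp(\int_0^T\sup w^+_r\diff r)$; this is the only place the global upper bound on $w^+$ is needed. It solves $\partial_t\phi+\chi_Rv^\eps\cdot\nabla\phi+w^\eps\phi=0$.

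\textbf{Step 2: testing.} We use $\phi$, further cut off in time near $0$ and $\bar t$, as a test function in~\eqref{eq:cer}. The Lipschitz regularity, as opposed to $\mcC^\infty$, is handled by a standard mollification argument together with the narrow continuity of $\sigma$ and $\sigma_0\le0$. This yields
\[
\int\psi\diff\sigma_{\bar t}\le \int\phi_0\diff\sigma_0+\int_0^{\bar t}\!\!\int\Big[(v-\chi_Rv^\eps)\cdot\nabla\phi+(w-w^\eps)\phi\Big]\diff\sigma_t\diff t .
\]
The first term is $\le0$. The support of $\phi_t$ lies in a fixed ball $B$, since $\psi$ has compact support and the truncated velocity is bounded. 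Choosing $R$ so large that the characteristics ending in $\supp\psi$ stay in $B_R$, we get $\chi_R=1$ on the relevant set, and the truncation disappears from the error.

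\textbf{Step 3: main obstacle, the error term.} We must show that the error vanishes as $\eps\to0$. The factor $\phi$ is uniformly bounded, and $\nabla\phi$ is bounded on $B$ by $\exp(\int\Lip(v^\eps_t,B)+\Lip(w^\eps_t,B)\diff t)$, which is uniform in $\eps$ by~\eqref{eq:w1loc}. Hence the error is at most
\[
C\int_0^T\!\!\int_B\big(|v-v^\eps|+|w-w^\eps|\big)\diff|\sigma_t|\diff t .
\]
Here $|\sigma_t|$ is not absolutely continuous, so mollification does not converge $|\sigma_t|$-a.e. in general. To overcome this we exploit the local Lipschitz continuity of $v_t,w_t$: mollification in space then gives
\[
\sup_B|v_t-v^\eps_t|\le\eps\Lip(v_t,B_1(B)),
\]
which converges to $0$ pointwise and uniformly on $B$. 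Combined with $\sup_t|\sigma_t|(B)<\infty$ from~\eqref{e:extra-assumption}, this gives the convergence. We therefore mollify only in space, so that time measurability is preserved and no time smoothness is needed; flows with $L^1$-in-time Lipschitz constants are handled by Carathéodory theory.

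\textbf{Conclusion.} Letting $\eps\to0$ gives $\int\psi\diff\sigma_{\bar t}\le0$ for all $\psi\ge0$, and hence $\sigma_{\bar t}\le0$. We expect the delicate point to be the justification of the testing in Step 2, that is, the regularity of $\phi$ in time and the boundary terms at $t=0,\bar t$. For this we would rely on~\eqref{eq:pbd} and the narrow continuity of $\sigma$.
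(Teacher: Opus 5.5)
The gap is in Step~2, at the sentence ``Choosing $R$ so large that the characteristics ending in $\supp\psi$ stay in $B_R$''.

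Assumption~\eqref{eq:w1loc} gives only local bounds on $v$, with no growth condition at infinity. So the backward characteristics of $v$ (or of $v^\eps$) issuing from $\supp\psi$ at time $\bar t$ need not exist on $[0,\bar t]$: they can escape to infinity in finite time. This happens for $v_t(x)=-(1+x^2)$, $t>\pi/2$, in Example~\ref{example:intro}. Then no $R$ makes $\chi_R\equiv1$ along them, and you are left with the truncation error $\int_0^{\bar t}\int(1-\chi_R)\,v^\eps\cdot\nabla\phi\,\diff\sigma_t\,\diff t$. It involves $\nabla\phi$ exactly where the cutoff acts, and the Lipschitz constant of the truncated flow grows with $R$, so you have no bound on this term uniform in $R$.

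This is not a technicality. Take $\sigma=\nu^2$ from Example~\ref{example:intro}. It satisfies \eqref{eq:cer} (with equality), \eqref{eq:pbd}, \eqref{e:extra-assumption} and every local bound in~\eqref{eq:w1loc}; it fails only $\int_0^T\sup_{\R^d}w_t^+\diff t<\infty$. Yet $\sigma_t\ge0$ and $\sigma_t\ne0$ for $t>\pi/2$. If confinement held, your argument would need only $\sup_B w^+$ on a fixed ball, and would therefore prove this false statement. Indeed, in your scheme the global bound on $w^+$, which you say is needed for $\|\phi\|_\infty$, is never actually used.

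The missing idea is to cut off the \emph{test function} instead of relying on confinement. Test~\eqref{eq:cer} with $\chi_R\phi$, where $\chi_R\equiv1$ on $B_{R/2}$, $\supp\chi_R\subset B_R$ and $|\nabla\chi_R|\le 4/R$. Build $\phi$ from globally Lipschitz smooth fields that coincide with (approximations of) $v,w$ on $B_R$. Then:
\begin{itemize}
\item The approximation error lives in $B_R$ and vanishes as $\eps\to0$ for fixed $R$. Your space-only estimate $\sup_B|v_t-v^\eps_t|\le\eps\Lip(v_t,B_1(B))$, combined with $\sup_t|\sigma_t|(B)<\infty$, does this correctly and more simply than Lemma~\ref{le:techn}.
\item The cutoff error is bounded by $\frac{4}{R}\|\phi\|_\infty\int_0^T\int_{B_R\setminus B_{R/2}}|v_t|\diff|\sigma_t|\diff t$. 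This involves only $\|\phi\|_\infty$, not $\nabla\phi$.
\end{itemize}
This is exactly where $\int_0^T\sup_{\R^d}w_t^+\diff t<\infty$ enters: it bounds $\|\phi\|_\infty$ uniformly in $R$, after which \eqref{eq:pbd} lets $R\to\infty$.

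The paper does this with the inhomogeneous dual problem $\partial_t\phi+v^{n,R}\cdot\nabla\phi+w^{n,R}\phi=\psi$, $\phi_T=0$, so that $\phi\le0$. Your terminal-value formulation works equally well once the cutoff is placed on the test function.
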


Before proving Proposition~\ref{prop:uniqu}, we need a standard approximation lemma.

\begin{lemma}[A technical lemma]\label{le:techn} Let~$u \colon [0,T] \times \R^d \to \R^m$,~$m \in \N_+$, be a Borel map such that 
\begin{equation}
\label{eq:uw1loc}
\int_0^T \braket{ \sup_{B} |u_t| + \Lip(u_t, B) } \diff t < \infty \comma B \Subset \R^d \comma
\end{equation}
let~$R>0$, and let~$\sigma \in \mcC([0,T]; \msM(\R^d))$ be such that
\begin{align}
    \label{e:hp-sigma}
    \sup_{t \in [0,T]}|\sigma_t|(B_R(0)) < \infty\,.
\end{align} 
Then there exists a sequence~$(u^n)_{n} \subset \mcC^\infty(\R \times \R^d; \R^m)$ such that 
\begin{align}\label{eq:unbd}
\sup_{n \in \N} \int_0^T \sup_{\R^d} |u^n_t|\diff t &\le  \int_0^T  \sup_{\overline{B_{2R}(0)}} |u_t| \diff t \comma
\\ \label{eq:unw1}
\sup_{n \in \N} \int_0^T \braket{ \Lip(u^n_t, \R^d) } \diff t &\le \int_0^T \braket{  \Lip(u_t, \overline{B_{2R}(0)}) + \frac{2}{R}\sup_{\overline{B_{2R}(0)}} |u_t|  } \diff t
\end{align}
and
\begin{equation}\label{eq:uconv}
    \lim_{n \to \infty} \int_0^T \int_{B_R(0)} |u_t(x) - u^n_t(x)| \diff |\sigma_t|(x) \diff t =0 \fstop
\end{equation}
If~$m=1$, we also have
\begin{equation}\label{eq:globalupper}
\int_0^T \sup_{\R^d} (u^n_t)^+ \diff t \le \int_0^T \sup_{\overline{B_{2R}(0)}}  u_t^+ \diff t \fstop
\end{equation}
\end{lemma}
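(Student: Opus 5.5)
We will build $u^n$ in three steps: cut $u$ off outside a ball, mollify in space, and then mollify in time.

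\textbf{Cut-off.} We fix a cut-off $\chi\in\mcC^\infty_c(\R^d;[0,1])$ with $\chi\equiv1$ on $B_R(0)$, $\operatorname{supp}\chi\subset\overline{B_{2R}(0)}$ and $\Lip(\chi)\le 1/R$ (a radial profile decaying linearly between $R$ and $2R$, smoothed slightly; the extra room in \eqref{eq:unw1} absorbs the smoothing). We then set $\tilde u_t\eqdef\chi u_t$, extended by zero outside $\overline{B_{2R}(0)}$ and also for $t\notin[0,T]$. It satisfies $\sup_{\R^d}|\tilde u_t|\le\sup_{\overline{B_{2R}}}|u_t|$ and $\sup_{\R^d}\tilde u_t^+\le \sup_{\overline{B_{2R}}}u_t^+$, the latter because $\chi\ge0$. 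For the Lipschitz constant we use
\[
|\chi u_t(x)-\chi u_t(y)|\le \chi(x)|u_t(x)-u_t(y)|+|u_t(y)||\chi(x)-\chi(y)|.
\]
Treating separately the cases where $x$ or $y$ lies outside $B_{2R}$, this gives $\Lip(\tilde u_t,\R^d)\le \Lip(u_t,\overline{B_{2R}})+\frac{2}{R}\sup_{\overline{B_{2R}}}|u_t|$; the factor $2$ is a safe margin for the smoothing of $\chi$. Measurability in $t$ of these suprema is standard: restrict to a countable dense set, using the local Lipschitz continuity of $u_t$.

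\textbf{Space--time mollification.} We take $u^n\eqdef \tilde u*\rho_n$, where $\rho_n(t,x)=\theta_n(t)\varrho_n(x)$ is a product of standard mollifiers. Since convolution with a probability kernel does not increase $\sup|\cdot|$, $\sup(\cdot)^+$ or the spatial Lipschitz constant, we get for each $t$
\[
\sup_{\R^d}|u^n_t|\le \int\theta_n(t-s)\,\sup|\tilde u_s|\diff s .
\]
Integrating over $[0,T]$ and using Fubini together with $\int\theta_n=1$, we obtain $\int_0^T\sup|u^n_t|\diff t\le\int_\R \sup|\tilde u_s|\diff s=\int_0^T\sup|\tilde u_s|\diff s$, since $\tilde u$ vanishes outside $[0,T]$. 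The same argument gives \eqref{eq:unbd}, \eqref{eq:unw1} and \eqref{eq:globalupper}. Smoothness of $u^n$ holds because $\tilde u\in L^1_{\loc}(\R\times\R^d)$, being bounded by an $L^1(dt)$ function in space.

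\textbf{Convergence \eqref{eq:uconv}: the main obstacle.} The difficulty is that $|\sigma_t|$ may be singular, so $L^1$ convergence in $(t,x)$ is not enough. The way around it is to split the error, using $\tilde u=u$ on $B_R$:
\[
|u_t(x)-u^n_t(x)|\le \int\theta_n(t-s)\,|\tilde u_s*\varrho_n(x)-\tilde u_t(x)|\diff s .
\]
We bound the integrand by $\Lip(\tilde u_s)\,\epsilon_n+\|\tilde u_s-\tilde u_t\|_\infty$, where $\epsilon_n$ is the support radius of $\varrho_n$. Integrating against $|\sigma_t|(B_R)\le M$ (finite by \eqref{e:hp-sigma}) and over $t$ gives
\[
M\Big(\epsilon_n\int\Lip(\tilde u_s)\diff s+\int_0^T\!\!\int\theta_n(t-s)\|\tilde u_s-\tilde u_t\|_\infty\diff s\diff t\Big).
\]
The first term tends to zero trivially. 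For the second, the map $t\mapsto\tilde u_t\in \mcC_0(\R^d;\R^m)$ is separable-valued and Bochner integrable: its values lie in compactly supported continuous functions, a separable space, and it is measurable with $\int\|\tilde u_t\|_\infty\diff t<\infty$. Hence continuity of translations in $L^1(\R;\mcC_0)$ makes the second term vanish. If Bochner measurability is delicate to justify, we would instead approximate $\tilde u$ in $L^1(dt;\mcC_0)$ by simple functions with values in finitely many continuous functions, for which the time-mollification estimate is immediate.
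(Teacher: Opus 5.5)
Your proof is correct. For \eqref{eq:unbd}, \eqref{eq:unw1} and \eqref{eq:globalupper} it coincides with the paper's argument: cut off with a Lipschitz cut-off, then mollify in space and time, using that convolution does not increase $\sup|\cdot|$, $\sup(\cdot)^+$ or the spatial Lipschitz constant, and that time mollification preserves the $L^1(\diff t)$ norm.

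The genuine difference is in the proof of \eqref{eq:uconv}.

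\emph{The paper's route} is pointwise.
\begin{enumerate}
\item It uses the converse of dominated convergence to extract a subsequence with an $L^1$ dominating function $h(t)$ for $\sup|u^n_t|+\Lip(u^n_t,\R^d)$.
\item From $L^1_{\loc}$ convergence it gets $u^n_t(x)\to u_t(x)$ for $x$ in a countable dense set and a.e.\ $t$.
\item The uniform Lipschitz bound upgrades this to every $x$.
\item It concludes by dominated convergence against $\int_0^T|\sigma_t|\restr{B_R(0)}\diff t$.
\end{enumerate}

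\emph{Your route} proves the stronger, uniform-in-space statement $\int_0^T\sup_{B_R(0)}|u^n_t-u_t|\diff t\to0$ along the whole sequence. You split the error into a spatial part $\epsilon_n\Lip(\tilde u_s)$ and a temporal part, which vanishes by continuity of translations in the Bochner space $L^1(\R;\mcC_0(\R^d;\R^m))$. Then \eqref{eq:uconv} follows directly from $|\sigma_t|(B_R(0))\le M$, with no subsequence extraction and no dominated convergence.

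The cost is the vector-valued measurability, which is indeed easy. For a.e.\ $t$ (those with $\Lip(u_t,\overline{B_{2R}(0)})<\infty$), $\tilde u_t$ is continuous with support in $\overline{B_{2R}(0)}$. Hence $\|\tilde u_t-g\|_\infty=\sup_{x\in\Q^d}|\tilde u_t(x)-g(x)|$ is Borel in $t$ for every $g\in\mcC_0$. Separability of $\mcC_0$ then gives strong measurability; this is the same trick as in Lemma~\ref{le:curvemeas}.

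One small cleanup: your claim that the values lie in $\mcC_c$ holds only for a.e.\ $t$. On the exceptional null set of times, redefine $\tilde u_t\eqdef 0$. This changes neither $u^n$, which is a time integral, nor the integral in \eqref{eq:uconv}.

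In short, your approach gives a cleaner and quantitatively stronger mode of convergence in exchange for a short Bochner-measurability check. The paper's approach stays entirely within scalar measure theory, at the price of a subsequence and a pointwise-everywhere argument.
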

\begin{proof}
   We trivially extend~$u$ to~$\R \times \R^d$ and let~$\xi \in \mcC^\infty_c(\R^d)$ be such that~$\xi \equiv1$ on~$B_R(0)$,~$0 \le \xi \le 1$,~$\supp (\xi) \subset B_{2R}(0)$,~$\Lip(\xi, \R^d) \le 2/R$. Define~$k\eqdef u \xi$ and observe that~$k=u$ on~$[0,T] \times B_R(0)$ and
    \begin{equation}\label{eq:rbound}
    \sup_{\R^d} |k_t| \le \sup_{\overline{B_{2R}(0)} }|u_t|, \quad \Lip (k_t, \R^d) \le \Lip(u_t, \overline{B_{2R}(0)}) + \frac{2}{R}\sup_{\overline{B_{2R}(0)} }|u_t|  
    \end{equation}
    for every~$t \in [0,T]$. Therefore, using also~\eqref{eq:uw1loc}, we have
    \begin{equation}\label{eq:boundw}
     \sup_{\R^d} |k_\cdot| + \Lip (k_\cdot, \R^d) \in L^1([0,T]) \fstop
    \end{equation}
    Now we consider standard mollifiers~$(\varrho_\eps)_{\eps>0} \subset \mcC_c^\infty(\R^d)$ and~$(\tau_\eps)_{\eps>0} \subset \mcC_c^\infty(\R)$ and we set
    \[ u^{\eps}_t(x) \eqdef \int_{\R} \int_{\R^d} k_{t-s}(x-y) \varrho_\eps(y) \tau_\eps(s) \diff y \diff s\comma \quad (t,x) \in \R \times \R^d.\]
    Observe that~$u^{\eps} \in \mcC^\infty(\R \times \R^d; \R^m)$. Moreover, for every~$t \in \R$ and~$\eps >0$, we have
    \begin{align} \label{eq:thef}
    \sup_{\R^d} |u_t^{\eps}| &\le (\sup_{\R^d}|k_\cdot | \ast \tau_\eps)(t) \comma
    \\ 
    \label{eq:thef1}
    \Lip(u_t^\eps, \R^d) &\le (\Lip(k_\cdot, \R^d) \ast \tau_\eps)(t) \fstop
    \end{align}
    Since by~\eqref{eq:boundw} we have
    \begin{align*}
        \sup_{\R^d}|k_\cdot | \ast \tau_\eps \to \sup_{\R^d}|k_\cdot| &\quad \text{ in } L^1([0,T]) \text{ as } \eps \downarrow 0\comma
        \\
\Lip(k_\cdot, \R^d) \ast \tau_\eps \to \Lip(k_\cdot, \R^d) &\quad \text{ in } L^1([0,T]) \text{ as } \eps \downarrow 0 \comma
    \end{align*}
    the converse of Lebesgue dominated convergence theorem gives the existence of a subsequence~$(\eps_n)_n$ with~$\eps_n \downarrow 0$ as~$n \to  \infty$, a full measure subset $C \subset [0,T]$, and a function $h \in L^1([0,T])$ such that
    \begin{equation}\label{eq:hbound}
        (\sup_{\R^d}|k_\cdot | \ast \tau_{\eps_n})(t) + (\Lip(k_\cdot, \R^d) \ast \tau_{\eps_n})(t) \le h(t) \quad \text{ for every } n \in \N , \, t \in C.
    \end{equation}
    We claim that there exists a full measure subset~$A \subset [0,T]$ such that 
    \begin{equation}\label{eq:claim}
    u^{\eps_n}_t(x) \to k_t(x) \text{ for every } (t,x) \in  A \times \R^d \text{ as } n \to \infty \fstop
    \end{equation}
    Since  $k \in L^1_{\loc} ( [0,T]\times \R^d)$ by~\eqref{eq:boundw}, then~$u^{\eps_n} \to k$ in $ L^1_{\loc} ([0,T] \times \R^d)$ as~$n \to \infty$. Therefore, up to an unrelabeled subsequence, we can find a dense and countable subset~$J \subset \R^d$ such that for every~$x \in J$ the sets~$A_x \eqdef \{ t \in [0,T] : u^{\eps_n}_t(x) \to k_t(x) \text{ as } n \to \infty\}$ have full measure in~$[0,T]$. Define 
    \[ A\eqdef C \cap \bigcap_{x \in J } A_x \cap \set{t \in [0,T] : \Lip (k_t, \R^d) < \infty} \fstop \]
    Observe that~$A \subset [0,T]$ has full measure. Let~$(t,x) \in  A\times \R^d$ and let~$\delta>0$ be fixed. Since~$ J$ is dense in~$\R^d$, we can find~$y \in J$ such that $|x-y| < \delta$. By~\eqref{eq:thef1} and~\eqref{eq:hbound} we estimate
    \begin{align*}
        | u^{\eps_n}_t(x) - k_t(x) | &\le |u^{\eps_n}_t(x)-u^{\eps_n}_t(y)| + |u^{\eps_n}_t(y) - k_t(y)| + |k_t(y)-k_t(x)| 
        \\
        & \le \delta \left ( \Lip (u_t^{\eps_n}, \R^d) + \Lip (k_t, \R^d) \right ) +  |u^{\eps_n}_t(y) - k_t(y)|
        \\
        & \le \delta \left ( h(t)+ \Lip (k_t, \R^d) \right ) +  |u^{\eps_n}_t(y) - k_t (y)| \,.
    \end{align*}
     Since~$t \in A_y$, passing to the limit as~$n \to \infty$ and then as~$\delta \downarrow 0$ proves the claim in~\eqref{eq:claim}. Setting $u^n\eqdef u^{\eps_n}$, we have that clearly~\eqref{eq:unbd} and~\eqref{eq:unw1} hold because of~\eqref{eq:rbound},~\eqref{eq:thef},~\eqref{eq:thef1}, and the fact that mollification lowers the~$L^1$ norm. To prove~\eqref{eq:uconv} we apply the dominated convergence theorem with the measure
    \[ 
    |\sigma| \eqdef \int_0^T |\sigma_t |\restr{B_R(0)} \diff t \in \msM_+( [0,T] \times B_R(0)) \fstop 
    \]
    By~\eqref{eq:claim} and the fact that~$k$ coincides with~$u$ on~$B_R(0)$ we have that~$|u_t(x)-u_t^n(x)| \to 0$ as~$n \to \infty$ for~$|\sigma|$-a.e.~$(t,x) \in [0,T] \times B_R(0)$. We observe that by~\eqref{eq:thef} and~\eqref{eq:hbound} it holds that for~$|\sigma|$-a.e.~$(t,x) \in [0,T]\times B_R(0)$
    \begin{align*}
        |u_t(x)-u_t^n(x)| \le \sup_{\overline{B_R(0)}}|u_t| + \sup_{\R^d} |u_t^n| \le \sup_{\overline{B_R(0)}}|u_t| + h(t)\,.
    \end{align*}
     Since the right-hand side (as a function of~$(t,x)$) belongs to~$L^1( [0,T] \times B_R(0), |\sigma|)$ by~\eqref{eq:uw1loc} and~\eqref{e:hp-sigma} holds,  we can apply the dominated convergence theorem and conclude. The last assertion follows observing that
    \[
    k_t(x) \le  \sup_{\overline{B_{2R}(0)}} u_t^+ \comma \quad (t,x) \in [0,T] \times \R^d\comma
    \]
    the definition of~$u_t^\eps$, and the fact that mollification lowers the~$L^1$ norm.
\end{proof}

We now proceed with the proof of Proposition~\ref{prop:uniqu}.

\begin{proof}[Proof of Proposition~\ref{prop:uniqu}]
Fix~$\psi \in \mcC^{\infty}_c((0,T) \times \R^d)$~with $0 \le \psi \le 1$,~and a family of smooth cutoff functions~$(\chi_R)_{R>0} \subset \mcC_c^\infty(\R^d)$~such that~$\supp( \chi_R) \subset B_ {R}(0)$,~$\chi_R \equiv 1$ on~$B_{R/2}(0)$,~$0 \le \chi_R \le 1$ and~$\sup_{\R^d} |\nabla \chi_R | \le 4/R$. Let~$(v^{n, R})_n \subset \mcC^\infty(\R \times \R^d; \R^d)$ and~$(w^{n, R})_n \subset \mcC^\infty(\R \times \R^d)$  be the sequences of functions as in Lemma~\ref{le:techn} for~$u=v, m=d, R, \sigma$ and~$u=w, m=1, R, \sigma$, respectively.
We set 
\[ 
\phi^{n,R}_t(x) \eqdef - \int_t^T \psi_s(X^{n,R}_{t \to s}(x))e^{\int_t^s w_\tau^{n,R}(X^{n,R}_{t \to \tau}(x)) \diff \tau} \diff s, \quad (t,x) \in [0,T] \times \R^d\comma
\]
where~$X^{n,R}$ is the flow associated to~$v^{n,R}$, i.e.~for every~$(t,x) \in [0,T] \times \R^d$ the map~$[0,T] \ni s \mapsto X^{n,R}_{t \to s}(x) \in \R^d$ is smooth and satisfies
\[
X^{n,R}_{t \to t}(x)=x\comma \quad \tfrac{\diff}{\diff s} X^{n,R}_{t \to s}(x) = v_s^{n,R}(X^{n,R}_{t \to s}(x)) \text{ a.e.~} s \in (0,T)\fstop
\]
Note that the flow exists and is smooth because of the smoothness and the global bounds on~$v^{n,R}$, see e.g.~\cite[Lemma 8.1.4]{AmbGigSav08}. It is easy to check that~$\{\phi^{n,R}\}_{n,R} \subset \mcC^\infty([0,T]\times \R^d)$ and it holds
\[ 
\partial_t \phi^{n,R} + \langle v^{n,R}, \nabla \phi^{n,R} \rangle = \psi - w^{n,R}\phi^{n,R} \text{ in } (0,T) \times \R^d, \quad \phi^{n,R}_T=0 \text{ in } \R^d \fstop
\]
Since~$\phi^{n,R} \chi_R \in \mcC^\infty_c([0,T] \times \R^d)$ and $\phi^{n,R} \chi_R\leq 0$ in $[0, T ] \times \R^{d}$, it can be used to test~\eqref{eq:cer} getting
\begin{align} \label{eq:thefirstinthechain}
0 &\ge -\int_{\R^d}\phi^{n,R}_0 \chi_R \diff \sigma_0 
\\
&\ge \int_0^T \int_{\R^d} \braket{\chi_R \partial_t\phi^{n,R}_t + \scalprod{v_t}{ \chi_R \nabla\phi^{n,R}_t +\phi^{n,R}_t \nabla \chi_R} + w_t \phi^{n,R}_t \chi_R }\diff \sigma_t \diff t \nonumber
\\
&= \int_0^T \int_{\R^d} \chi_R \braket{ \psi_t + \scalprod{v_t - v_t^{n,R}}{\nabla\phi^{n,R}_t}+ (w_t-w_t^{n,R})\phi^{n,R}_t  }\diff \sigma_t \diff t \nonumber
\\
& \quad + \int_0^T \int_{\R^d}\phi^{n,R}_t \scalprod{\nabla \chi_R}{v_t} \diff \sigma_t \diff t \nonumber
\\
& \ge \int_0^T \int_{\R^d} \chi_R \psi_t \diff \sigma_t \diff t \nonumber
\\
& \quad - (\|\nabla \phi^{n,R}\|_\infty + \|\phi^{n,R}\|_\infty) \int_0^T \int_{B_R(0)} \braket{|v_t-v^{n,R}_t| + |w_t - w_t^{n,R}|} \diff |\sigma_t| \diff t\nonumber
\\ 
& \quad - \frac{4}{R} \|\phi^{n,R}\|_\infty \int_0^T \int_{B_R(0) \setminus B_{R/2}(0)} |v_t| \diff |\sigma_t| \diff t \fstop \nonumber
\end{align}

We proceed to estimate~$\| \phi^{n,R}\|_{\infty}$ and~$\| \nabla \phi^{n, R}\|_{\infty}$. For every~$(t,x) \in [0,T] \times \R^d$, using~\eqref{eq:globalupper} for $w^{n, R}$ we have
\begin{align}
\label{e:estimate-varphinR}
|\phi^{n,R}_t(x)| &\le \int_t^T |\psi_s(X^{n,R}_{t \to s}(x)) | e^{\int_t^s (w_\tau^{n,R}(X^{n,R}_{t \to \tau}(x)))^+ \diff \tau} \diff s \le \int_0^T e^{\int_0^T (w_\tau^{n,R}(X^{n,R}_{t \to \tau}(x)))^+ \diff \tau} \diff s
\\
& \le \vphantom{\int_{0}^{T}} T e^{\int_0^T \sup_{\R^d} (w_\tau^{n,R})^+ \diff \tau }  \le Te^{\int_0^T  \sup_{\overline{B_{2R}(0)}} w_t^+ \diff t }  \le Te^{\int_0^T  \sup_{\R^d} w_t^+ \diff t } \,. \nonumber
\end{align}
Regarding the gradient, for every~$(t,x) \in [0,T] \times \R^d$, we have by~\eqref{eq:unw1} that
\begin{align*}
| \nabla \phi^{n,R}_t(x) | & \le \int_t^T \bigg | \nabla \psi_s(X^{n,R}_{t \to s}(x)) D_x X^{n,R}_{t \to s}(x) 
\\
& \quad \quad +\psi_s(X^{n,R}_{t \to s}(x))\int_t^s \braket{ \nabla w_\tau^{n,R}(X^{n,R}_{t \to \tau}(x)) D_x X^{n,R}_{t \to \tau}(x) }\diff \tau  \bigg |
\\
& \quad\quad \cdot  e^{\int_t^s w_\tau^{n,R}(X^{n,R}_{t \to \tau}(x)) \diff \tau} \diff s
\\
& \le   \sup_{s,t \in [0,T]} \Lip(X^{n,R}_{t \to s}, \R^d) e^{\int_0^T  \sup_{\overline{B_{2R}(0)}} |w_t| \diff t }    \int_t^T \braket{\|\nabla \psi\|_\infty  + \int_t^s \Lip(w^{n,R}_\tau, \R^d) \diff \tau} \diff s
\\
& \le  T \sup_{s,t \in [0,T]} \Lip(X^{n,R}_{t \to s}, \R^d) e^{\int_0^T  \sup_{\overline{B_{2R}(0)}} |w_t| \diff t } 
\\
& \quad\quad \cdot \braket{ \|\nabla \psi\|_\infty + \int_0^T \braket{  \Lip(w_t, \overline{B_{2R}(0)}) + \frac{2}{R}\sup_{\overline{B_{2R}(0)}} |w_t|  } \diff t   } \,.
\end{align*}
By e.g.~\cite[Lemma 8.1.4]{AmbGigSav08}, we  have
\[
\sup_{s,t \in [0,T]} \Lip(X^{n,R}_{t \to s}, \R^d)  \le e^{S^{n,R}} \comma
\]
where
\begin{align*}
S^{n,R} &\eqdef \int_0^T (\sup_{\R^d} |v_t^{n,R}| + \Lip(v_t^{n,R}, \R^d)) \diff t \le \int_0^T \braket{  \paren{1+\tfrac{2}{R}} \sup_{ \overline{B_{2R}(0)}} |v_t| + \Lip(v_t, \overline{B_{2R}(0)})  } \diff t  \comma
\end{align*}
again by~\eqref{eq:unw1}. Overall, for every~$(t,x) \in [0,T] \times \R^d$, we have
\begin{align*}
| \nabla \phi^{n,R}_t(x) | &\le T  \exp \braket{ \int_0^T \braket{ \sup_{\overline{B_{2R}(0)}} |w_t| +  \paren{1+\tfrac{2}{R}} \sup_{ \overline{B_{2R}(0)}} |v_t| + \Lip(v_t, \overline{B_{2R}(0)})  } \diff t }
\\
& \quad \cdot \braket{ \|\nabla \psi\|_\infty + \int_0^T \braket{  \Lip(w_t, \overline{B_{2R}(0)}) + \frac{2}{R}\sup_{\overline{B_{2R}(0)}} |w_t|  } \diff t   } \fstop
\end{align*}
We deduce that
\[
C \eqdef \sup_{n,R} \|\phi^{n,R}\|_\infty < \infty\comma \quad C_R \eqdef \sup_{n} \|\nabla \phi^{n,R}\|_\infty < \infty \comma \quad R>0 \fstop
\]
Inserting these estimates in~\eqref{eq:thefirstinthechain}, we obtain
\begin{align*}
\int_0^T \int_{\R^d} \chi_R \psi_t \diff \sigma_t \diff t &\le (C_R+C) \int_0^T \int_{B_R(0)} \braket{|v_t-v^{n,R}_t| + |w_t - w_t^{n,R}|} \diff |\sigma_t| \diff t \\
&\quad +  \frac{4C}{R} \int_0^T \int_{B_R(0) \setminus B_{R/2}(0)} |v_t| \diff |\sigma_t| \diff t \fstop
\end{align*}
Passing to the limit as~$n \to \infty$ using~\eqref{eq:uconv}, we get
\begin{equation}\label{eq:laststep}
\int_0^T \int_{\R^d} \chi_R \psi_t \diff \sigma_t \diff t  \le \frac{4C}{R} \int_0^T \int_{B_R(0) \setminus B_{R/2}(0)} |v_t| \diff |\sigma_t| \diff t \fstop
\end{equation}
By monotone convergence theorem and by~\eqref{eq:pbd}, letting~$R \to \infty$ we obtain~$\int_0^T \int_{\R^d} \psi_t \diff \sigma_t \diff t \le 0$. By the arbitrariness of $\psi \geq 0$ the proof is complete.
\end{proof}

We conclude this section with a consequence of Proposition~\ref{prop:uniqu}.

\begin{corollary}
\label{cor:comp} Let~$v \colon [0,T] \times \R^d \to \R^d$,~$w \colon  [0,T] \times \R^d \to \R$ be Borel maps, and let~$\mu^1, \mu^2 \in \mcC([0,T]; \msM_+(\R^d))$ be satisfying the following assumptions:
\begin{align} \label{eq:w1loc12}
&\int_0^T \braket{ \sup_{B} |v_t| + \Lip(v_t, B) + \sup_B w_t^- + \sup_{\R^d} w_t^+ +\Lip(w_t, B) } \diff t < \infty \comma B \Subset \R^d \comma
\\ \label{eq:cer1}
&\vphantom{\int_{0}^{T}}  \partial_t \mu^1 + \div ( v \mu^1) \le w\mu^1 \text{ in } \mcD'((0,T) \times \R^d) \comma
\\ \label{eq:cer2}
&\vphantom{\int_{0}^{T}} \partial_t \mu^2 + \div ( v \mu^2) \ge w\mu^2 \text{ in } \mcD'((0,T) \times \R^d) \comma 
\\ \label{eq:pbd12}
& \int_0^T \int_{\R^d} \braket{|v_t| + |w_t|} \diff \mu^i_t \diff t < \infty\comma \, i=1,2 \fstop
\end{align}
Suppose moreover that~$\mu^1_0 \le \mu^2_0$. Then $\mu^1_t \le \mu^2_t$ for every $t \in [0,T]$.
\end{corollary}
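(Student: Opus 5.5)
The plan is to apply Proposition~\ref{prop:uniqu} to the signed curve $\sigma_t \eqdef \mu^1_t - \mu^2_t$.

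First, $\sigma \in \mcC([0,T];\msM(\R^d))$, since narrow continuity of $\mu^1$ and $\mu^2$ passes to the difference. Subtracting~\eqref{eq:cer2} from~\eqref{eq:cer1} in the sense of distributions gives $\partial_t\sigma + \div(v\sigma) \le w\sigma$ in $\mcD'((0,T)\times\R^d)$. Here the inequalities are understood as usual against nonnegative test functions, and all terms are well defined thanks to~\eqref{eq:pbd12}. This is~\eqref{eq:cer}. Assumption~\eqref{eq:w1loc} is exactly~\eqref{eq:w1loc12}. For~\eqref{eq:pbd}, we use $|\sigma_t| \le \mu^1_t + \mu^2_t$ together with~\eqref{eq:pbd12}. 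Finally, $\sigma_0 = \mu^1_0 - \mu^2_0 \le 0$ by hypothesis.

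The only point needing care is the local uniform bound in~\eqref{e:extra-assumption}. We bound $|\sigma_t|(B) \le \mu^1_t(\R^d) + \mu^2_t(\R^d)$. Each map $t \mapsto \mu^i_t(\R^d) = \int 1 \diff\mu^i_t$ is continuous on the compact interval $[0,T]$, because $1 \in \Cb(\R^d)$ and the curves are narrowly continuous. Hence these total masses are bounded, which gives even a global uniform bound.

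Proposition~\ref{prop:uniqu} then yields $\sigma_t \le 0$, i.e.\ $\mu^1_t \le \mu^2_t$, for every $t\in[0,T]$. I do not expect any real obstacle. The verification of~\eqref{e:extra-assumption} is the only step that is not a direct transcription, and it is settled by narrow continuity and positivity of the $\mu^i$.
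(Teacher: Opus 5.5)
Your proposal is correct and follows essentially the same route as the paper. The paper also applies Proposition~\ref{prop:uniqu} to $\sigma \eqdef \mu^1-\mu^2$: it gets \eqref{eq:cer} by linearity and \eqref{eq:pbd} from $|\sigma_t| \le \mu^1_t+\mu^2_t$, and it gets the local bound in \eqref{e:extra-assumption} from the continuity of the total masses $t\mapsto \mu^i_t(\R^d)$ on $[0,T]$.
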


\begin{proof} 
Let~$\sigma\eqdef \mu^1-\mu^2$ and observe that~$\sigma \in  \mcC([0,T]; \msM(\R^d))$. Obviously~\eqref{eq:w1loc} holds; by linearity,~\eqref{eq:cer1}, and~\eqref{eq:cer2}, also~\eqref{eq:cer} holds. Since~$|\sigma_t| \le \mu^1_t + \mu^2_t$ for every~$t \in [0,T]$, by~\eqref{eq:pbd12}, also~\eqref{eq:pbd} holds. If~$B \Subset \R^d$, then
\[
 |\sigma_t|(B)  \le \mu_t^1(B) + \mu_t^2(B) \le \mu_t^1(\R^d) + \mu_t^2(\R^d) \le \sup_{t \in [0,T]} \braket{\mu_t^1(\R^d) + \mu_t^2(\R^d)} < \infty\comma
\] 
where the last inequality comes from the continuity of~$[0,T] \ni t \mapsto \mu_t^i(\R^d) = \int 1 \diff \mu_t^i$, for~$i=1,2$. By Proposition~\ref{prop:uniqu}, we get that~$\sigma_t \le 0$ for every~$t \in [0,T]$, that is,~$\mu^1_t \le \mu^2_t$ for every $t \in [0,T]$.
\end{proof}

\begin{remark}[On the global bound from above on~$w$]\label{re:globalbound}
We remark that the global control $\int_0^T \sup_{\R^d} w_t^+ \diff t < \infty$ provided in assumption~\eqref{eq:w1loc} is only used in the last step of the proof of Proposition~\ref{prop:uniqu}. Indeed, setting
\[
A(R) \eqdef \int_0^T \sup_{\overline{B_{2R}(0)}} w_t^+ \diff t \comma \qquad \Gamma_R \eqdef T e^{A(R)} \comma \qquad R>0 \comma
\]
we notice that $A(R)$ and $\Gamma_{R}$ are finite for every~$R>0$ under the purely local version of~\eqref{eq:w1loc}, i.e., $\int_{0}^{T} \sup_{\overline{B _{2R} (0)} } |w_{t}| \, \diff t <\infty$, and are non-decreasing in~$R$. The chain of estimates~\eqref{e:estimate-varphinR} yields~$\|\phi^{n,R}\|_\infty \le \Gamma_R$ already after invoking~\eqref{eq:globalupper}. Every step of the proof up to~\eqref{eq:laststep} therefore goes through verbatim with~$C$ replaced by~$\Gamma_R$, as~$R$ is kept fixed and only the limit~$n \to \infty$ is taken. When trying to pass to the limit as $R \to \infty$ in the inequality
\[
\int_0^T \int_{\R^d} \chi_R \psi_t \diff \sigma_t \diff t  \le \frac{4\, \Gamma_R}{R} \int_0^T \int_{B_R(0) \setminus B_{R/2}(0)} |v_t| \diff |\sigma_t| \diff t \comma
\]
the conclusion of Proposition~\ref{prop:uniqu} holds as soon as
\begin{equation}\label{eq:sharpgrowth}
\liminf_{R \to \infty} \ \frac{\Gamma_R}{R} \int_0^T \int_{B_R(0) \setminus B_{R/2}(0)} |v_t| \diff |\sigma_t| \diff t = 0 \fstop
\end{equation}
By~\eqref{eq:pbd},~\eqref{eq:sharpgrowth} holds whenever~$\liminf_{R \to \infty} \Gamma_R/R < \infty$, that is, if there exist~$K>0$ and a sequence~$R_k \uparrow \infty$ such that
\begin{equation}\label{eq:loggrowth}
\int_0^T \sup_{\overline{B_{2R_k}(0)}} w_t^+ \diff t \le \log( K R_k ) \comma \qquad k \in \N \fstop
\end{equation}
Loosely speaking,~\eqref{eq:loggrowth} means that mass may be created at an unbounded rate at infinity, provided the rate grows slowly enough with respect to the~$1/R$ decay produced by the cutoff functions~$\chi_R$. One could then obtain the conclusion of~Proposition~\ref{prop:uniqu} replacing the condition $\int_0^T \sup_{\R^d} w_t^+ \diff t < \infty$ with the weaker~\eqref{eq:sharpgrowth}. Note that the same remark applies to Corollary~\ref{cor:comp}, whose proof only consists in applying Proposition~\ref{prop:uniqu} to~$\sigma \eqdef \mu^1 - \mu^2$.
\end{remark}

\section{The superposition principle under local assumptions}\label{s:repr}

This section is devoted to the proof of the following superposition principle featuring local boundedness and Lipschitz continuity of the fields $v$ and $w$. 

\begin{theorem}[Superposition principle under local assumptions]
\label{thm:supercone}
Let~$v\colon [0,T] \times \R^d \to \R^d$,~$w\colon [0,T] \times \R^d \to \R$ be Borel maps, and let~$\mu \in \mcC([0,T]; \msM_+(\R^d))$ be satisfying the following assumptions:
\begin{align} 
\label{eq:w1loccone}
&\int_0^T \braket{ \sup_{B} |v_t| + \Lip(v_t, B) + \sup_{B}|w_t| +\Lip(w_t, B) } \diff t < \infty \comma B \Subset \R^d \comma
\\ \label{eq:cercone}
&\vphantom{\int_{0}^{T}} \partial_t \mu + \div ( v \mu) = w\mu \text{ in } \mcD'((0,T) \times \R^d) \comma
\\ \label{eq:pbdcone}
& \int_0^T \int_{\R^d} \braket{|v_t|^2 + |w_t|^2} \diff \mu_t \diff t < \infty \fstop
\end{align}
Then there exists a probability measure~$\eta \in \mcP(\cic)$ such that:
\begin{enumerate}
\item $\eta$ is concentrated on curves~$\gamma \in\acic$ such that
\begin{equation}\label{eq:ode}
 x_\gamma'(t) = v_t(x_\gamma(t)) \comma \qquad k_\gamma'(t) = w_t(x_\gamma(t))\, k_\gamma(t) \qquad \text{for a.e.~} t \in O_\gamma \semicolon
\end{equation}
\item $h_t^2(\eta) = \mu_t$ for every~$t \in [0,T]$.
\end{enumerate}
\end{theorem}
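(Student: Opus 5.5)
The proof will have three steps. First, construct the maximal characteristic flow under the local bounds~\eqref{eq:w1loccone}. Second, prove a two-time representation formula for $\mu$ along this flow by a duality argument that uses only compactly supported test functions. Third, glue the resulting curve measures, one for each time $s$, into a single $\eta$, and normalise it with Lemma~\ref{le:dilation}.

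\emph{Step 1: the maximal flow.} By~\eqref{eq:w1loccone}, $v_t$ is locally Lipschitz with an $L^1$-in-time local constant. Hence for every $(s,x)\in[0,T]\times\R^d$ the Carath\'eodory problem $\dot y=v_t(y)$, $y(s)=x$, has a unique maximal solution $r\mapsto X_{s\to r}(x)$ on a relatively open interval $I(s,x)\ni s$ of $[0,T]$. At an endpoint of $I(s,x)$ interior to $[0,T]$, $|X_{s\to r}(x)|\to\infty$. On $I(s,x)$ we set
\[
W_{s\to r}(x)\eqdef\exp\Big(\int_s^r w_\tau(X_{s\to\tau}(x))\diff\tau\Big),
\]
and we let $E_{s\to t}\eqdef\set{x: t\in I(s,x)}$; this set is open. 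The group property gives $X_{t\to s}\circ X_{s\to t}=\id$ and $W_{t\to s}(X_{s\to t})\,W_{s\to t}=1$ on $E_{s\to t}$. Moreover $X_{s\to t}$ is a homeomorphism from $E_{s\to t}$ onto $E_{t\to s}$.

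\emph{Step 2: the two-time formula.} We aim for
\[
(X_{s\to t})_\sharp(W_{s\to t}\mu_s\restr{E_{s\to t}})=\mu_t\restr{E_{t\to s}}\qquad\text{for all } s,t\in[0,T].
\]
Fix $s<t$ and $\psi\in\mcC_c(E_{t\to s})$. Compactness of $\supp\psi$ and continuity of the flow give a ball $B$ such that all trajectories $X_{t\to r}(y)$, $y\in\supp\psi$, $r\in[s,t]$, stay in $B$. Define the backward solution
\[
\phi_r(z)\eqdef\psi(X_{r\to t}(z))\,W_{r\to t}(z),
\]
extended by $0$ where the trajectory does not reach $\supp\psi$. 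This $\phi$ is supported in the compact tube swept by $\supp\psi$, is locally Lipschitz in $z$ and absolutely continuous in $r$, and solves $\partial_r\phi+\langle v,\nabla\phi\rangle+w\phi=0$. Only the bounds on $B$ enter, so no global control of $w$ is needed.

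Testing~\eqref{eq:cercone} with $\phi$ gives $\int\phi_t\diff\mu_t=\int\phi_s\diff\mu_s$, which is exactly the claim tested against $\psi$. Then let $\psi$ range over $\mcC_c(E_{t\to s})$; the case $t<s$ is symmetric. The test is admissible only after a regularisation, since $\phi$ is merely Lipschitz. For this I would smooth $\psi$ and the fields on a neighbourhood of $B$, as in Lemma~\ref{le:techn}, and pass to the limit using~\eqref{eq:uconv}; this is the same scheme as in the proof of Proposition~\ref{prop:uniqu}, with the cutoff issue gone because everything stays inside $B$. Alternatively one can deduce the formula from Corollary~\ref{cor:comp} applied to localised fields.

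I expect this step to be the main obstacle: justifying the duality rigorously, and handling the measurability of $I(s,x)$ and $E_{s\to t}$.

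\emph{Step 3: gluing.} For each $s$, let $\Theta_s(x)\in\cic$ be the curve $r\mapsto[X_{s\to r}(x),W_{s\to r}(x)^{1/2}]$ for $r\in I(s,x)$, set equal to the vertex $\mfo$ outside $I(s,x)$. Continuity at the endpoints needs $W\to0$ there. Where this fails one can simply truncate: the formula in Step 2 still holds for curves alive at both times, and continuity follows from~\eqref{eq:pbdcone} only after the whole construction. I would handle this point by showing that $\eta$-almost every curve has $r_\gamma\to0$ at blow-up times, using the energy bound.

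Put $\Lambda_s\eqdef(\Theta_s)_\sharp\mu_s$. For a curve $\gamma$ let $|I_\gamma|$ denote the length of its lifespan $O_\gamma$, and define
\[
\tilde\eta\eqdef\int_0^T\frac{\car_{\set{s\in O_\gamma}}}{|I_\gamma|}\,\Lambda_s\diff s .
\]
Using Step 2, the measures $\Lambda_s$ and $\Lambda_{s'}$ agree on the curves alive at both $s$ and $s'$, once the $r$-coordinate is rescaled by the cocycle $W$. Consequently, for each fixed $t$, the curves alive at $t$ contribute $\mu_t$ to $h^2_t(\Lambda_s)$ for every $s$ in their lifespan. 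Averaging these contributions with weight $1/|I_\gamma|$ over the lifespan recovers exactly $h^2_t(\tilde\eta)=\mu_t$. (The paper's life-midpoint selection is the analogous device, using a single reference time in place of the average.) Property (1) holds by construction: on $O_\gamma$ we have $x'=v(x)$ and $(W)'=wW$. By Lemma~\ref{le:accurves}, the energy identity computed from~\eqref{eq:pbdcone} gives $\gamma\in\acic$ for $\tilde\eta$-almost every $\gamma$.

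Finally, apply $\mcR_\lambda$ with $\lambda(\gamma)=\Sigma(\gamma)$ (or with $\lambda$ tied to the total mass of $\tilde\eta$) to turn $\tilde\eta$ into a probability measure. Lemma~\ref{le:dilation} guarantees that this preserves both $h^2_t$ and the characteristic system, since that system is dilation invariant.
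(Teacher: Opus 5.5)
Your proposal is sound in outline, but it departs from the paper at both key steps.

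\textbf{The two-time formula.} The paper never tests~\eqref{eq:cercone} with the backward solution of the true flow. For $s<t$ it proves only the inequality $(X_{s\to t})_\sharp(W_{s\to t}\mu_s\restr{E_{s\to t}})\le\mu_t\restr{E_{t\to s}}$. It does so by applying Corollary~\ref{cor:comp} to $\lambda^n_\tau=(X_{s\to\tau})_\sharp(W_{s\to\tau}\mu_s\restr{E_n})$, where $E_n$ is the set of points whose trajectory stays in $\overline{B_n(0)}$ on $[s,t]$. The truncated reaction $w_\tau\wedge\sup_{\overline{B_n(0)}}|w_\tau|$ supplies the global upper bound that the corollary requires. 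The case $t<s$ then follows by time reversal, and the opposite inequality by applying the first one to the pair $(t,s)$ together with the cocycle~\eqref{eq:cocycleW}.

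Your duality argument gives equality on $E_{t\to s}$ in one stroke and uses only bounds on a compact tube. The price is a stability estimate you have not supplied. Once the fields are regularized via Lemma~\ref{le:techn}, you must show two things:
\begin{enumerate}
\item the regularized tubes $X^n_{t\to r}(\supp\psi)$, $r\in[s,t]$, eventually lie inside the ball where~\eqref{eq:uconv} holds;
\item $\phi^n_s\to\phi_s$ boundedly, i.e.\ $X^n_{t\to\cdot}\to X_{t\to\cdot}$ and $W^n\to W$ uniformly on $[s,t]\times\supp\psi$.
\end{enumerate}
Both follow from a Gr\"onwall argument using the a.e.-in-time pointwise convergence and the uniform $L^1$-in-time Lipschitz bounds. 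The paper's comparison route avoids this entirely, because in Proposition~\ref{prop:uniqu} the regularized test functions never have to converge to anything.

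\textbf{The gluing.} Your average over the lifespan with weight $|I(s,x)|^{-1}$ replaces the paper's life-midpoint cross-section $\Psi$ with weight $C^{-1}$. Your computation of $h^2_t(\tilde\eta)=\mu_t$ is correct: it uses the two-time formula, $I(s,X_{t\to s}(y))=I(t,y)$, and $\int_0^T\car_{I(t,y)}(s)\diff s=|I(t,y)|$. It even spares you the a.e.\ finiteness of $C$, since $|I|\in(0,T]$ everywhere.

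However, $|I|^{-1}$ is unbounded, so $\tilde\eta$ is not obviously finite. Neither $\mcR_\Sigma$ nor a constant rescaling yields a probability until finiteness is shown. It does hold:
\begin{enumerate}
\item each curve $\Theta_s(x)$ has radius $1$ at time $s$, so $\Sigma\ge1$;
\item on $\acic$ one has $\Sigma^2(\gamma)\le\frac2T\int_0^Tk_\gamma\diff t+2T\mcF(\gamma)$;
\item integrating (2) against the marginal and energy identities gives $\tilde\eta(\cic)\le\int\Sigma^2\diff\tilde\eta<\infty$.
\end{enumerate}
Alternatively, normalize on the parameter space as the paper does. Scale the radii by $1/\theta(s,x)$ for a Borel $\theta>0$ with $\int_0^T\int_{\R^d}\theta^2(s,x)|I(s,x)|^{-1}\diff\mu_s(x)\diff s=1$.

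Finally, continuity at death times must be settled before $\Lambda_s$ is even a measure on $\cic$; your ``truncate'' remark is unnecessary. Test the two-time formula with $|v_\tau|^2+\frac14|w_\tau|^2$ to get $\int\mcE(s,\cdot)\diff\mu_s<\infty$ for every $s$, with $\mcE$ as in~\eqref{eq:defenergy}. Finite energy then forces $W^{1/2}_{s\to\cdot}(x)\to0$ at a death time, by maximality of $I(s,x)$, as in the paper's proof.
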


The proof of Theorem~\ref{thm:supercone} builds upon the technical two-time representation of Lemma~\ref{le:twotime}, reported in Section~\ref{sub:representation-regular}. Section~\ref{sub:lemmata} collects some technical measurability results.

We start by introducing some auxiliary notation that will be used throughout this section.  If~$v \colon [0,T]\times\R^d\to\R^d$ is a Borel map satisfying the local assumption
\begin{equation}\label{eq:vloc}
\int_0^T \braket{ \sup_{B} |v_t| + \Lip(v_t, B) } \diff t < \infty \comma B  \Subset \R^d \comma
\end{equation}
for every~$(t,x) \in [0,T] \times \R^d$, there exists a unique maximal solution to the characteristic system
\[
X'(r) = v_r(X(r)) \comma \quad X(t)=x \comma
\]
defined on a relatively open subinterval~$I(t,x)$ of~$[0,T]$ containing~$t$ as relatively internal point, see e.g.~\cite[Lemma 8.1.4]{AmbGigSav08}; we denote by~$X_{t \to r}(x)$ its value at time~$r \in I(t,x)$.

 If~$w \colon [0,T]\times\R^d\to\R$ is a scalar Borel map satisfying the local assumption
\begin{equation}\label{eq:wloc}
\int_0^T\sup_{B} |w_t|  \diff t < \infty \comma B  \Subset \R^d \comma
\end{equation}
for every~$(s,x) \in [0,T] \times \R^d$ and~$\tau \in I(s,x)$, we define 
\[
W_{s \to \tau}(x) \eqdef e^{\int_s^\tau w_r(X_{s \to r}(x)) \diff r} \in (0, \infty) \fstop
\]
Note that, since~$r \mapsto X_{s \to r}(x)$ is continuous in~$[s \wedge \tau, s \vee \tau]$, it is bounded, so that~$[s \wedge \tau, s \vee \tau] \ni r \mapsto w_r(X_{s \to r}(x))$ is integrable by~\eqref{eq:wloc}; therefore~$W_{s \to \tau}(x)$ is well defined. We record the following cocycle rule: if~$b, c \in I(a,x)$, then~$I(b,X_{a \to b}(x))=I(a,x)$,~$X_{b \to c}(X_{a \to b}(x)) = X_{a \to c}(x)$ and
\begin{equation}\label{eq:cocycleW}
W_{a \to c}(x) = W_{a \to b}(x)\, W_{b \to c}(X_{a \to b}(x)) \,.
\end{equation}
In particular~$W_{a \to b}(x)\, W_{b \to a}(X_{a \to b}(x)) = W_{a\to a}(x) = 1$.

\subsection{Measurability results}
\label{sub:lemmata}

We collect in the next three lemmas some measurability results that are used repeatedly in the sequel and are independent of the rest of the section.
\begin{lemma}[Regularity of the flow]\label{le:flowreg}
Let~$v \colon [0,T] \times \R^d \to \R^d$ be a Borel map satisfying~\eqref{eq:vloc} and set
\begin{equation}\label{eq:defD}
\mcD \eqdef \set{ (s,x,\tau) \in [0,T] \times \R^d \times [0,T] : \tau \in I(s,x) } \fstop
\end{equation}
Then~$\mcD$ is relatively open in~$[0,T] \times \R^d \times [0,T]$ and the map~$(s,x,\tau) \mapsto X_{s \to \tau}(x)$ is continuous on~$\mcD$. In particular:
\begin{enumerate}
\item for every~$a,b \in [0,T]$ the set~$E_{a \to b} \eqdef \set{ x \in \R^d : b \in I(a,x) }$ is open in~$\R^d$;
\item $[0,T] \times \R^d \ni (s,x) \mapsto \inf I(s,x)$ is upper semicontinuous and~$[0,T] \times \R^d \ni (s,x) \mapsto \sup I(s,x)$ is lower semicontinuous.
\end{enumerate}
\end{lemma}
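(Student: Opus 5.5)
The plan is to prove the continuity of the flow on $\mcD$ and the openness of $\mcD$ together, using a Gronwall estimate that only needs local Lipschitz bounds on a compact neighbourhood of a reference trajectory. Fix $(s_0,x_0,\tau_0)\in\mcD$. By relative openness of $I(s_0,x_0)$ we can pick a compact interval $J=[a,b]\subset I(s_0,x_0)$, relatively open in $[0,T]$ up to the endpoints $0,T$, containing $s_0,\tau_0$ in its relative interior. The curve $K\eqdef\set{X_{s_0\to r}(x_0): r\in J}$ is compact. Let $B\eqdef\set{y:\dist(y,K)<1}$, which is bounded. Set $g(t)\eqdef\sup_B|v_t|$ and $L(t)\eqdef\Lip(v_t,B)$; both lie in $L^1([0,T])$ by~\eqref{eq:vloc}.

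For $(s,x)$ close to $(s_0,x_0)$, compare $Y(r)\eqdef X_{s\to r}(x)$ with $Z(r)\eqdef X_{s_0\to r}(x_0)$ on $J$. As long as $Y$ stays in $B$, we have
\[
|Y(r)-Z(r)|\le |x-Z(s)|+\Big|\int_s^r L(\rho)|Y(\rho)-Z(\rho)|\diff\rho\Big|,
\]
and $|x-Z(s)|\le|x-x_0|+\big|\int_{s_0}^s g\big|$. Gronwall gives
\[
|Y(r)-Z(r)|\le \delta\, e^{\int_0^T L},\qquad \delta\eqdef|x-x_0|+\Big|\int_{s_0}^s g\Big|.
\]
The integral $\int_{s_0}^s g$ tends to $0$ as $s\to s_0$ by absolute continuity of the integral. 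So once $\delta e^{\int L}<1/2$, a continuation argument applies: take the first exit time of $Y$ from the $1/2$-neighbourhood of $K$. This shows that $Y$ never leaves $B$ on $J$. Hence the maximal solution cannot blow up before leaving $J$, which gives $J\subset I(s,x)$. Standard extension for Carathéodory ODEs with locally integrable bounds is available from \cite[Lemma 8.1.4]{AmbGigSav08}. Consequently a neighbourhood of $(s_0,x_0)$ times the relative interior of $J$ lies in $\mcD$, so $\mcD$ is relatively open.

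For continuity of the flow, take $\tau$ near $\tau_0$ and estimate
\[
|X_{s\to\tau}(x)-X_{s_0\to\tau_0}(x_0)|\le \delta e^{\int L}+\Big|\int_{\tau_0}^\tau g\Big|,
\]
where the second term uses that $Z$ has speed at most $g$ on $B$. Both terms vanish in the limit. The main obstacle is the careful bookkeeping in this continuation argument: the first exit time must be defined correctly and the estimate must be shown to hold up to it. The argument also has to work in both time directions, since $\tau$ may be smaller than $s$, and when $J$ touches $0$ or $T$ one must handle relative openness there.

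Finally, the two consequences. Item (1) holds because $E_{a\to b}$ is the slice of the relatively open set $\mcD$ at $s=a$, $\tau=b$, and hence is open in $\R^d$. For item (2), suppose $\sup I(s_0,x_0)>c$. Then $(s_0,x_0,c')\in\mcD$ for some $c'>c$, and by openness $(s,x,c')\in\mcD$ nearby, so $\sup I(s,x)\ge c'>c$. This is lower semicontinuity of $\sup I$. The symmetric argument gives upper semicontinuity of $\inf I$.
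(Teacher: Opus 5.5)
Your proposal is correct and takes essentially the paper's route: a Gr\"onwall estimate on a compact neighbourhood of the reference trajectory over a compact interval $J_0 \subseteq I(s_0,x_0)$, a continuation argument giving $J_0 \subseteq I(s,x)$, and then openness of $\mathcal{D}$, continuity of the flow, and items (1)--(2) exactly as you describe. The differences are cosmetic. The paper runs the continuation as a connectedness argument: it shows that the set $G$ of times up to which the perturbed trajectory stays in $K_1$ is nonempty, relatively open and relatively closed in $J_0$, and gets closedness from maximality of $I(s,x)$ directly rather than from a citation. It also bounds $|x - Z_0(s)|$ by continuity of $Z_0$ instead of by $\int_{s_0}^s \sup_B |v_r|\,\diff r$.
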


\begin{proof}
Let~$(s_0,x_0,\tau_0) \in \mcD$ and set~$Z_0(\tau) \eqdef X_{s_0 \to \tau}(x_0)$,~$\tau \in I(s_0, x_0)$. Since~$s_0,\tau_0 \in I(s_0,x_0)$ and $I (s_{0}, x_{0})$ is relatively open in~$[0,T]$, both~$s_0$ and~$\tau_0$ are relatively internal to~$I(s_0,x_0)$, so that we may fix a compact interval~$J_0 \subseteq I(s_0,x_0)$ having~$s_0$ and~$\tau_0$ as relatively internal points. Let
\[
K \eqdef \set{ Z_0(\tau) : \tau \in J_0 } \comma \qquad K_{\rho} \eqdef \set{ y \in \R^d : \dist(y,K) \le \rho } \text{ for } \rho>0 \comma
\]
which are compact, and set~$L \eqdef \int_0^T \Lip(v_r,K_{1}) \diff r < \infty$ by~\eqref{eq:vloc}. All the objects below depend on a parameter~$\eps \in (0, \tfrac12 e^{-L})$, which for the moment we fix arbitrarily. Since~$s_0$ is relatively internal to~$J_0$ and~$Z_0$ is continuous, there is~$\delta_\eps>0$ such that
\begin{align}
    \label{e:continuity}
    s \in [0,T] \text{ and } |s-s_0| + |x - x_0| < \delta_\eps \quad \Longrightarrow \quad s \in J_0 \text{ and } \big| x - Z_0(s) \big| < \eps \fstop
\end{align}
Fix a pair~$(s,x)$ satisfying the left hand side of~\eqref{e:continuity} and set~$Z(\tau) \eqdef X_{s \to \tau}(x)$,~$\tau \in I(s,x)$. We claim that
\begin{equation}\label{eq:claimflow}
J_0 \subseteq I(s,x) \qquad \text{and} \qquad \sup_{\tau \in J_0} |Z(\tau)-Z_0(\tau)| \le \eps\, e^L \fstop
\end{equation}
To prove~\eqref{eq:claimflow}, consider the set
\[
G \eqdef \set{ \tau \in J_0 \cap I(s,x) : Z\big([\tau \wedge s, \tau \vee s]\big) \subseteq K_1 } \comma
\]
which is nonempty, since~$s \in G$ because~$Z(s)=x$ and~$|x-Z_0(s)|<\eps<1$, and which is an interval, since~$\tau \in G$ implies that every point between~$s$ and~$\tau$ belongs to~$G$. We show that~$G$ is relatively open and closed in~$J_0$; being nonempty, it then coincides with the interval~$J_0$.

Fix~$\tau \in G$. Both~$Z$ and~$Z_0$ solve the characteristic system~$Y' = v_r(Y)$ on~$[\tau \wedge s, \tau \vee s] \subseteq I(s,x) \cap I(s_0, x_0)$, where they take values in~$K_1$; hence
\[
|Z(\rho)-Z_0(\rho)| \le |Z(s)-Z_0(s)| + \int_{\rho \wedge s}^{\rho \vee s} \Lip(v_r,K_1)\, |Z(r)-Z_0(r)| \diff r \fstop
\]
Gr\"onwall's inequality then gives
\begin{equation}\label{eq:gronwallflow}
|Z(\rho) - Z_0(\rho)| \le \eps\, e^{L} < \tfrac12 \comma \qquad \rho \in [\tau \wedge s, \tau \vee s] \comma
\end{equation}
so that~$Z(\rho) \in K_{1/2}$ there; in particular~$Z(\tau) \in K_{1/2}$, which lies in the interior of~$K_1$. Since~$\tau \in I(s,x)$,~$I(s,x)$ is relatively open in~$[0,T]$, and~$Z$ is continuous, there is a relatively open (in~$J_0$) subinterval~$N \ni \tau$ with~$N \subseteq I(s,x)$ and~$Z(N) \subseteq K_1$. For every~$\tau' \in N$ the segment~$[\tau' \wedge s, \tau' \vee s]$ is contained in~$[\tau \wedge s, \tau \vee s] \cup N$, where~$Z$ takes values in~$K_1$; hence~$\tau' \in G$, so that~$G$ is relatively open. If instead~$\tau_n \in G$ and~$\tau_n \to \tau \in J_0$, then $ (\tau \wedge s , \tau \vee s) \subseteq G$. Thus,~$Z$ takes values in~$K_{1/2}$ on~$[\tau \wedge s, \tau \vee s] \setminus \set{\tau}$ by~\eqref{eq:gronwallflow}, with~$|Z'(r)| \le \sup_{K_1}|v_r| \in L^1([0,T])$ there; therefore~$Z$ is absolutely continuous up to~$\tau$, takes values in the compact set~$K_{1/2}$, and, by maximality of~$I(s,x)$, is defined at~$\tau$, so that~$\tau \in G$: hence~$G$ is relatively closed. This proves~\eqref{eq:claimflow}.

Since~$\tau_0$ is relatively internal to~$J_0$, we may fix any relatively open (in~$[0,T]$) neighbourhood~$U$ of~$\tau_0$ with~$U \subseteq J_0$. If~$(s,x,\tau) \in [0,T]\times \R^d \times [0,T]$ satisfies~$|s-s_0|< \delta_\eps/2$,~$|x-x_0|< \delta_\eps/2$ and~$\tau \in U$, then~$(s,x)$ fulfills the left hand side of~\eqref{e:continuity}, and~\eqref{eq:claimflow} gives~$\tau \in J_0 \subseteq I(s,x)$. Thus~$\mcD$ contains the relatively open neighbourhood~$\set{ (s,x,\tau) : |s-s_0|<\delta_\eps/2,\ |x-x_0|<\delta_\eps/2,\ \tau \in U }$ of~$(s_0,x_0,\tau_0)$, i.e.~it is relatively open. 

For the continuity of the flow, fix~$\vartheta \in (0,1)$; we now specialize~$\eps$ to a value~$0<\eps<\tfrac{\vartheta}{2} e^{-L} <\tfrac{1}{2} e^{-L}$ and, by the continuity of~$Z_0$ at~$\tau_0$, we may replace~$U$ with~$U_\vartheta$ so that~$\big| X_{s_0 \to \tau}(x_0) - X_{s_0 \to \tau_0}(x_0) \big| < \tfrac\vartheta2$ for every~$\tau \in U_\vartheta$. Then, for every~$(s,x,\tau) \in [0,T] \times \R^d \times [0,T]$ with~$|s-s_0|<\delta_\eps/2$,~$|x-x_0|<\delta_\eps/2$ and~$\tau \in U_\vartheta$, the second part of~\eqref{eq:claimflow} gives
\[
\big| X_{s \to \tau}(x) - X_{s_0 \to \tau_0}(x_0) \big| \le \eps\, e^L + \big| X_{s_0 \to \tau}(x_0) - X_{s_0 \to \tau_0}(x_0) \big| < \vartheta \fstop
\]
As~$\vartheta \in (0,1)$ and~$(s_0,x_0,\tau_0) \in \mcD$ are arbitrary, this proves the continuity of the flow on~$\mcD$.

Item~$(1)$ follows because~$E_{a \to b}$ is the section of the open set~$\mcD$ at~$(a,\cdot,b)$. As for item~$(2)$, if~$\sup I(s_0,x_0) > c$ then~$(s_0,x_0,\tau) \in \mcD$ for some~$\tau>c$; since~$\mcD$ is relatively open in~$[0,T] \times \R^d \times [0,T]$, we can find some~$\delta>0$ such that, if~$(s,x) \in [0,T] \times \R^d$ are such that~$|s_0-s|+|x-x_0|< \delta$, then~$(s,x,\tau) \in \mcD$, giving in particular that~$\sup I(s,x)>c$ for every such~$(s,x)$. The argument for the infimum is symmetric.
\end{proof}

\begin{lemma}[Measurability of the derived objects]
\label{le:borelmaps}
 Let~$v \colon [0,T] \times \R^d \to \R^d$,~$w \colon [0,T] \times \R^d \to \R$ be Borel maps satisfying the following assumption
\[
\int_0^T \braket{ \sup_{B} |v_t| + \Lip(v_t, B) + \sup_{B} |w_t|} \diff t < \infty \comma B \Subset \R^d \comma
\]
and let~$\mathcal{D}$ be as in~\eqref{eq:defD}. Then, the following holds:
\begin{enumerate}
\item the map $(s,x,\tau) \mapsto W_{s \to \tau}(x)$ is Borel on~$\mcD$;
\item for every Borel map~$f \colon [0,T] \times \R^d \to [0,\infty]$ the function
\[
[0,T] \times \R^d \ni (s,x) \longmapsto \int_{I(s,x)} f_u\big( X_{s \to u}(x) \big)\, W_{s \to u}(x) \diff u \in [0,\infty]
\]
is Borel.
\end{enumerate}
\end{lemma}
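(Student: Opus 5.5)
Both items will follow from the continuity of the flow on the relatively open set $\mcD$ (Lemma~\ref{le:flowreg}) together with a monotone class argument in the integrand. I will prove a slightly stronger intermediate claim first: for every Borel $g\colon[0,T]\times\R^d\to[0,\infty]$, the map
\[
\mcD \ni (s,x,\tau) \longmapsto \int_{s\wedge\tau}^{s\vee\tau} g_r\big(X_{s\to r}(x)\big)\diff r \in [0,\infty]
\]
is Borel. Note that the integral makes sense: $[s\wedge\tau,s\vee\tau]\subseteq I(s,x)$ because $I(s,x)$ is an interval containing both endpoints, and $r\mapsto g_r(X_{s\to r}(x))$ is Borel on it.

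\textbf{Step 1: the intermediate claim.} Consider the extended set $\widetilde{\mcD}\eqdef\set{(s,x,\tau,r): (s,x,\tau)\in\mcD,\ r\in[s\wedge\tau,s\vee\tau]}$. It is Borel, since it is the intersection of $\mcD\times[0,T]$ with a closed set. On $\widetilde{\mcD}$ the map $(s,x,\tau,r)\mapsto (r,X_{s\to r}(x))$ is continuous, because $(s,x,r)\in\mcD$ and we may apply Lemma~\ref{le:flowreg}. Hence $(s,x,\tau,r)\mapsto g_r(X_{s\to r}(x))\car_{\widetilde{\mcD}}$ is Borel and nonnegative. Tonelli's theorem then gives that the integral in $r$ is Borel in $(s,x,\tau)$. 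To be safe about the Tonelli step on a non-product domain, I extend the integrand by $0$ outside $\widetilde{\mcD}$ and integrate over $r\in[0,T]$.

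\textbf{Step 2: item (1).} Apply Step~1 to $g=w^+$ and $g=w^-$. Both integrals are finite on $\mcD$: the trajectory on $[s\wedge\tau,s\vee\tau]$ lies in a compact set, so~\eqref{eq:wloc} applies. The signed integral $\int_s^\tau w_r(X_{s\to r}(x))\diff r$ is then $\pm$ the difference of the two Borel functions, with sign $\sgn(\tau-s)$, which is Borel. Composing with $\exp$ gives that $(s,x,\tau)\mapsto W_{s\to\tau}(x)$ is Borel on $\mcD$.

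\textbf{Step 3: item (2).} Write
\[
\int_{I(s,x)} f_u(X_{s\to u}(x))\,W_{s\to u}(x)\diff u=\int_0^T F(s,x,u)\diff u,
\qquad
F\eqdef \car_{\mcD}(s,x,u)\, f_u(X_{s\to u}(x))\,W_{s\to u}(x),
\]
with $F$ set equal to $0$ off $\mcD$. Since $\mcD$ is open (Lemma~\ref{le:flowreg}), hence Borel, the flow is continuous on it, and $W$ is Borel by Step~2, the function $F$ is Borel on $[0,T]\times\R^d\times[0,T]$ with values in $[0,\infty]$. Tonelli's theorem then gives the Borel measurability of $(s,x)\mapsto\int_0^T F\diff u$.

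The only delicate point is handling functions defined on the non-product domain $\mcD$; extension by zero together with openness of $\mcD$ resolves it, so I expect no real obstacle beyond bookkeeping.
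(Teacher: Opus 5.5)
Your proposal is correct and follows essentially the same route as the paper. Both use continuity of $(s,x,r)\mapsto (r,X_{s\to r}(x))$ on $\mcD$ from Lemma~\ref{le:flowreg} together with extension by zero. Item (1) then follows from Tonelli applied to $w^\pm$ and the sign $\sgn(\tau-s)$, and item (2) from Tonelli applied to $\car_{\mcD}\, f_u(X_{s\to u}(x))\, W_{s\to u}(x)$. The only difference is cosmetic: you restrict to the four-variable set $\widetilde{\mcD}$, whereas the paper composes $w$ with the flow on $\mcD$ and multiplies by the indicator of $\{s\wedge\tau\le r\le s\vee\tau\}$, which amounts to the same thing.
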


\begin{proof}
We prove item $(1)$. The map~$\Xi(s,x,r) \eqdef \big( r, X_{s \to r}(x) \big)$ is continuous on~$\mcD$ by Lemma~\ref{le:flowreg}, so that~$g \eqdef w \circ \Xi$ is Borel on~$\mcD$. We extend~$g$ by~$0$ outside~$\mcD$, which is a Borel set. The function
\[
(s,x,\tau,r) \longmapsto g^{\pm}(s,x,r)\, \car_{\set{ s \wedge \tau \le r \le s \vee \tau }}(r)
\]
is Borel and non-negative, so by Tonelli's theorem~$G^\pm(s,x,\tau) \eqdef \int_{s \wedge \tau}^{s \vee \tau} g^\pm(s,x,r) \diff r$ is a Borel function defined on~$[0,T] \times \R^d \times [0,T]$ with values in~$[0,\infty)$, as we have already observed that~$[s \wedge \tau, s \vee \tau] \ni r \mapsto w_r(X_{s \to r}(x))$ is integrable by~\eqref{eq:wloc}.  Since
\[
\int_s^\tau w_r(X_{s \to r}(x)) \diff r = \sgn(\tau-s) \braket{G^+ (s,x,\tau)- G^-(s,x,\tau)}  \quad \text{on } \mcD  \comma
\]
we deduce that~$W$ is Borel.

As for item $(2)$, the function~$(s,x,u) \mapsto f_u(X_{s \to u}(x))\, W_{s \to u}(x)\, \car_{\mcD}(s,x,u)$ is Borel by Lemma~\ref{le:flowreg} and by item~$(1)$, and it is non-negative. Thus, item $(2)$ follows from Tonelli's theorem.
\end{proof}

The last lemma is a general measure-theoretic fact, which we shall use to lift measurability from the values of a curve to the curve itself.

\begin{lemma}[Measurability of curve-valued maps]\label{le:curvemeas}
Let~$(\mcY, \mssd_{\mcY})$ be a separable metric space, let~$G$ be a Borel subset of a metric space and let~$F \colon G \to \mcC([0,T];\mcY)$ be such that~$g \mapsto F(g)_\tau$ is Borel for every~$\tau \in [0,T]$. Then~$F$ is Borel.
\end{lemma}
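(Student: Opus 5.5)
The plan is to show that the Borel $\sigma$-algebra of $\mcC([0,T];\mcY)$ with the uniform distance is generated by the evaluation maps $e_\tau\colon\gamma\mapsto\gamma_\tau$, $\tau\in[0,T]$. The statement then follows at once: each $e_\tau\circ F$ is Borel by hypothesis, so $F^{-1}(A)$ is Borel for every $A$ in the generated $\sigma$-algebra.

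First, each $e_\tau$ is $1$-Lipschitz for the uniform distance, hence Borel. This gives the inclusion $\sigma(e_\tau:\tau\in[0,T])\subseteq\mathscr{B}(\mcC([0,T];\mcY))$.

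For the converse inclusion, I will show that every closed uniform ball is cylindrical. Fix $\gamma_0\in\mcC([0,T];\mcY)$ and $\rho\ge0$. By continuity of curves on $[0,T]$,
\[
\set{\gamma : \sup_{t\in[0,T]}\mssd_{\mcY}(\gamma_t,(\gamma_0)_t)\le\rho}=\bigcap_{q\in\Q\cap[0,T]}\set{\gamma : \mssd_{\mcY}(\gamma_q,(\gamma_0)_q)\le\rho}.
\]
Each set in the countable intersection is $e_q^{-1}$ of a closed ball in $\mcY$, so it belongs to $\sigma(e_\tau)$.

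The remaining step, which is the only delicate one, is to pass from closed balls to all open sets. This requires separability of $\mcC([0,T];\mcY)$ under the uniform distance. Since $[0,T]$ is compact and $\mcY$ is separable metric, $\mcC([0,T];\mcY)$ is separable; this is a standard fact, holding even without completeness of $\mcY$. Given a countable dense set $\set{\gamma^j}_j$, every open set $U$ is the union of the closed balls $\overline{B}(\gamma^j,1/m)$, with $j,m\in\N_+$, that are contained in $U$. This is a countable union, so $U\in\sigma(e_\tau)$.

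If one prefers to avoid the separability fact, an alternative is to embed $\mcY$ isometrically into $\ell^\infty$ via the Kuratowski embedding, or into a separable Banach space via a Fréchet-type embedding using a dense sequence. One then approximates curves by piecewise-linear interpolants with rational nodes and values in a countable dense set, which yields separability directly. In either case, once the generation of the Borel $\sigma$-algebra is established, the measurability of $F$ on the Borel set $G$, with its trace $\sigma$-algebra, follows immediately.
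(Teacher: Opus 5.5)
Your argument is correct and is essentially the paper's proof: continuity reduces each uniform-ball condition to countably many rational times, and separability of $\mcC([0,T];\mcY)$ writes every open set as a countable union of balls. The differences are only cosmetic. You phrase it as the Borel $\sigma$-algebra being generated by the evaluations and use closed balls, while the paper checks directly that $g\mapsto\sup_{\tau\in\Q\cap[0,T]}\mssd_{\mcY}(F(g)_\tau,\gamma_\tau)$ is Borel and uses open balls.
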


\begin{proof}
Since~$(\mcY, \mssd_{\mcY})$ is separable, so is~$\mcC([0,T];\mcY)$. Let~$D \eqdef \Q \cap [0,T]$. For every~$\gamma \in \mcC([0,T];\mcY)$ the function
\[
g \longmapsto \sup_{\tau \in [0,T]} \mssd_{\mcY}\big( F(g)_\tau, \gamma_\tau \big) = \sup_{\tau \in D} \mssd_{\mcY}\big( F(g)_\tau, \gamma_\tau \big)
\]
is Borel, being a countable supremum of Borel functions. Hence, the preimage under~$F$ of every open ball is Borel, and so is the preimage of every open set, the latter being a countable union of open balls by separability.
\end{proof}

\subsection{Key tool: Two-time representation}
\label{sub:representation-regular}

The following two-time representation is the key step. Recall that~$X$ and~$W$ have been introduced at the beginning of this section and that the sets~$E_{a \to b}$ are defined in Lemma~\ref{le:flowreg}.

\begin{lemma}[Two-time representation]\label{le:twotime}
Let~$v \colon [0,T] \times \R^d \to \R^d$,~$w \colon [0,T] \times \R^d \to \R$ be Borel maps, and let~$\mu \in \mcC([0,T]; \msM_+(\R^d))$ be satisfying~\eqref{eq:w1loccone},~\eqref{eq:cercone}, and
\begin{align} 
\label{eq:pbdlocle}
& \int_0^T \int_{\R^d} \braket{|v_t| + |w_t|} \diff \mu_t \diff t < \infty \fstop
\end{align}
Then, for every~$s,t \in [0,T]$ it holds
\begin{equation}\label{eq:twotime}
(X_{s \to t})_\sharp \paren{ W_{s\to t} \, \mu_s \restr{E_{s \to t}} } = \mu_t \restr{ E_{t \to s}}\fstop
\end{equation}
\end{lemma}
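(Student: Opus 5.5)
The plan is to prove both inequalities in \eqref{eq:twotime} through the comparison principle, Corollary~\ref{cor:comp}, applied on the time interval between $s$ and $t$ to two curves. The first is the restricted solution $\mu$; the second is the curve obtained by transporting $\mu_s\restr{E_{s\to t}}$ with the flow and weight. By symmetry (reversing time, which replaces $v,w$ by $-v,-w$ and keeps \eqref{eq:w1loccone}) it suffices to treat $s<t$. Set $\nu_\tau \eqdef (X_{s\to\tau})_\sharp\big(W_{s\to\tau}\,\mu_s\restr{E_{s\to t}}\big)$ for $\tau\in[s,t]$. This is well defined because $E_{s\to t}\subseteq E_{s\to\tau}$: the interval $I(s,x)$ contains $[s,t]$ as soon as it contains $s$ and $t$.

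First I check that $\nu$ is a narrowly continuous solution of \eqref{eq:cercone} on $(s,t)$. Testing with $\phi\in\mcC^\infty_c$ and differentiating $\tau\mapsto\phi_\tau(X_{s\to\tau}(x))W_{s\to\tau}(x)$ for each $x\in E_{s\to t}$ gives the weak formulation. The main issue is integrability, namely a bound of the form $\int_s^t\int (|v|+|w|)\diff\nu_\tau\diff\tau<\infty$. I do not expect this bound globally, since $W$ may be huge. I would therefore localize: exhaust $E_{s\to t}$ by compact sets $K_n$, whose trajectories stay in a compact set on $[s,t]$ by Lemma~\ref{le:flowreg}, and work with $\nu^n$ built from $\mu_s\restr{K_n}$. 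Because $\nu^n$ is supported in a compact set on which \eqref{eq:w1loccone} bounds $v,w$, the localized fields can be modified outside a neighbourhood so that they satisfy the global hypothesis \eqref{eq:w1loc12}, with $w^+$ bounded. Comparison is then applied to the modified fields. For $\mu$ itself I use a cutoff version: the conclusion of Proposition~\ref{prop:uniqu} requires only local Lipschitz control together with a global upper bound on $w$ tested against $\sigma$, and Remark~\ref{re:globalbound} shows that only finite $\Gamma_R$ and the cutoff estimate are needed.

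The inequality ``$\le$''. Since $\nu^n$ solves the equation with equality and $\mu$ is a nonnegative solution, I would rather compare on the set swept by the flow, which is cleaner. Use the dual test functions of Proposition~\ref{prop:uniqu}: for $\psi\ge0$ supported in the open set $\{(\tau,X_{s\to\tau}(x)):x\in E_{s\to t}\}$, solve the backward transport equation exactly along characteristics (no mollification is needed, because $v,w$ are locally Lipschitz there). Then $\phi_\tau(y)=-\int_\tau^t\psi_r(X_{\tau\to r}y)W_{\tau\to r}(y)\diff r$ is locally Lipschitz and compactly supported when $\psi$ has compact support. Testing \eqref{eq:cercone} with $\phi$ gives $\int\!\!\int\psi\diff\mu_\tau\diff\tau=\int(-\phi_s)\diff\mu_s-\int(-\phi_t)\diff\mu_t$ with $\phi_t=0$. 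The same identity holds for $\nu$, and both right-hand sides equal $\int_{E_{s\to t}}\int_s^t\psi_r(X_{s\to r})W_{s\to r}\diff r\diff\mu_s$. Hence $\mu=\nu$ as space-time measures on the swept region $\{(\tau,y):y\in E_{\tau\to s}\cap E_{\tau\to t}\}$.

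Letting $\tau\uparrow t$ and using narrow continuity together with the openness of $E_{t\to s}$ (Lemma~\ref{le:flowreg}) gives $\mu_t\restr{E_{t\to s}}=\nu_t$ tested against $\mcC_c(E_{t\to s})$, which is \eqref{eq:twotime}; note that $X_{s\to t}$ is a bijection $E_{s\to t}\to E_{t\to s}$. I expect the main obstacle to be justifying the use of these non-smooth, locally supported test functions (a mollification step as in Lemma~\ref{le:techn}) and the passage to the endpoint $\tau=t$, where the swept region degenerates. I would handle the latter by first obtaining the identity for $\tau<t$ on compact subsets $K\Subset E_{t\to s}$, whose backward trajectories stay compact, and then passing to the limit by continuity of the flow.
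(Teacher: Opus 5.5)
Your final argument, duality on the flow-invariant open set $U\eqdef\set{(\tau,y)\in(s,t)\times\R^d : s,t\in I(\tau,y)}$, takes a genuinely different route from the paper's, and it can be made to work.

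The paper never uses transported test functions in this lemma. For $s<t$ it does the following:
\begin{enumerate}
\item It restricts $\mu_s$ to the set $E_n$ of points whose trajectories stay in $\overline{B_n(0)}$ on $[s,t]$.
\item It replaces $w$ by the truncation $w^n=w\wedge\sup_{\overline{B_n(0)}}|w|$. This has a global upper bound and coincides with $w$ where $\lambda^n$ lives. Because $\mu\ge0$ and $w^n\le w$, it also turns $\mu$ into a supersolution for $(v,w^n)$.
\item Corollary~\ref{cor:comp} then gives $\lambda^n_\tau\le\mu_\tau$, and monotone convergence gives ``$\le$''. Time reversal covers $t<s$.
\item The reverse inequality comes for free: apply ``$\le$'' to the pair $(t,s)$, multiply by $W_{s\to t}$, push forward by $X_{s\to t}$, and use \eqref{eq:cocycleW}.
\end{enumerate}
So the paper only recycles the comparison principle and needs no new approximation. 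Your route yields the equality in one stroke. It uses neither the sign of $\mu$ nor comparison, and it needs no global upper bound on $w$, because for $\psi\in\mcC^\infty_c(U)$ the backward flow-out of $\supp\psi$ is compact (by invariance of $U$ and continuity of the flow on $\mcD$). Your endpoint passage on compacts $K\Subset E_{t\to s}$ is also fine.

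The price is that the central step is not actually done, and your parenthetical claim that no mollification is needed is false. With $v,w$ only locally Lipschitz in $x$ and integrable in $t$, your $\phi$ is merely Lipschitz in $y$ (flows of Lipschitz fields need not be differentiable) and only absolutely continuous in $\tau$. So $\nabla\phi_\tau$ exists only $\mcL^d$-a.e., and $\int\langle v_\tau,\nabla\phi_\tau\rangle\diff\mu_\tau$ is meaningless for singular $\mu_\tau$. Mollifying $\phi$ does not help either, since the commutator with $v\cdot\nabla$ cannot be controlled against a singular $\mu$.

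The regularization must act on the fields, as in the proof of Proposition~\ref{prop:uniqu}:
\begin{enumerate}
\item Take $(v^n,w^n)$ from Lemma~\ref{le:techn} with $R$ so large that the backward flow-out of $\supp\psi$ lies in $B_{R/2}(0)$.
\item Let $\phi^n$ be the smooth dual solutions built from the flows of $(v^n,w^n)$ and $\psi$.
\item Show via Gr\"onwall and $\int_0^T\sup_{B_R(0)}\big(|v^n_\tau-v_\tau|+|w^n_\tau-w_\tau|\big)\diff\tau\to0$ (pointwise convergence plus the equi-Lipschitz bounds of Lemma~\ref{le:techn}) that $\supp\phi^n\subset[s,t]\times B_R(0)$ for large $n$ and that $\phi^n_s\to\phi_s$ pointwise.
\item The error terms then vanish by \eqref{eq:uconv}.
\end{enumerate}
This is the real content of your proof and has to be written out.

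Finally, drop the comparison sketch in your opening paragraphs; it would not work. Corollary~\ref{cor:comp} compares two curves driven by the same fields. Modifying $v,w$ outside a neighbourhood of $\supp\nu^n$ destroys the equation for $\mu$, which is not localized there. Moreover, Remark~\ref{re:globalbound} does not say that finiteness of $\Gamma_R$ suffices: the limit $R\to\infty$ requires \eqref{eq:sharpgrowth}, which may fail under purely local bounds. The only modification compatible with $\mu$ is the paper's one-sided truncation of $w$ from above, which exploits $\mu\ge0$.
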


\begin{proof}
We first collect a few facts, valid for arbitrary times. For~$a, b \in [0,T]$ the set $E_{a \to b}$ is open, hence Borel, by Lemma~\ref{le:flowreg}(1). Since the maximal interval is constant along each trajectory, we have that
\begin{equation}\label{eq:bijEab}
X_{a \to b} \colon E_{a \to b} \to E_{b \to a} \quad \text{is a bijection with inverse } X_{b \to a} \fstop
\end{equation}
Indeed, if~$x \in E_{a \to b}$ then~$y \eqdef X_{a \to b}(x)$ satisfies~$I(b,y)=I(a,x) \ni a$, i.e.~$y \in E_{b \to a}$; and symmetrically, if~$y \in E_{b \to a}$ then~$X_{b \to a}(y) \in E_{a \to b}$.

We recall that for every $(a, x, b) \in \mathcal{D}$ (the latter is defined in Lemma~\ref{le:flowreg}), the map
\[
W_{a \to b}(x) \eqdef e^{\int_a^b w_r(X_{a \to r}(x)) \diff r} \in (0,\infty)
\]
is well-defined thanks to~\eqref{eq:w1loccone}.

We also observe that, for every~$\phi \in \mcC_b^\infty([0,T] \times \R^d)$ and every~$(a,x, b) \in \mathcal{D}$ the map
\[
\tau \mapsto \phi_\tau(X_{a \to \tau}(x))\, W_{a \to \tau}(x)
\]
is absolutely continuous on the closed interval with endpoints~$a$ and~$b$, with a.e.~derivative in time given by
\begin{align*}
g^a_\phi(\tau,x) \eqdef  W_{a \to \tau}(x) \biggl [ \partial_\tau \phi_\tau(X_{a \to \tau}(x)) + \langle \nabla \phi_\tau(X_{a \to \tau}(x)), v_\tau(X_{a \to \tau}(x)) \rangle
+ \phi_\tau( X_{a \to \tau}(x)) w_\tau(X_{a \to \tau}(x)) \biggr ]\fstop
\end{align*}

We divide the proof of~\eqref{eq:twotime} in two claims according to the mutual position of $s, t \in [0, T]$.

\medskip

\noindent\paragraph{Claim~1} For every~$0 \le s \le t \le T$ it holds
\begin{equation}\label{eq:halfonefull}
(X_{s \to t})_\sharp \left ( W_{s \to t} \mu_s \restr{E_{s \to t}} \right ) \le \mu_t \restr{E_{t \to s}}.
\end{equation}

\smallskip

\noindent\paragraph{Proof of Claim~1} We can assume~$s \ne t$; to prove~\eqref{eq:halfonefull}, we fix~$s,t \in [0,T]$ with~$s<t$ and abbreviate
\[
E \eqdef E_{s \to t} = \set{ z \in \R^d : t \in I(s,z) } \comma \qquad F \eqdef E_{t \to s} = \set{ y \in \R^d : s \in I(t,y) } \comma
\]
so that, by~\eqref{eq:bijEab},~$X_{s \to t} \colon E \to F$ is a bijection with inverse~$X_{t \to s}$. For~$\tau \in [s,t]$ set
\[
\lambda_\tau \eqdef (X_{s \to \tau})_\sharp \paren{ W_{s \to \tau}\, \mu_s \restr{E} } \fstop
\]
Proving~\eqref{eq:halfonefull} is then equivalent to showing
\begin{equation}\label{eq:halfone}
\lambda_t \le \mu_t \restr{F} \fstop
\end{equation}
To prove~\eqref{eq:halfone}, for~$n \in \N$, set
\begin{align*}
E_n &\eqdef \set{ z \in E : |X_{s \to r}(z)| \le n \text{ for every } r \in [s,t] } \comma
\\
\lambda^n_\tau &\eqdef (X_{s \to \tau})_\sharp \paren{ W_{s \to \tau}\, \mu_s \restr{E_n} } \comma \qquad &&\tau \in [s,t] \comma
\\
M_\tau^n &\eqdef \sup_{\overline{B_n(0)}} |w_\tau| \comma \qquad &&\tau \in [0,T] \comma
\\
w_\tau^n &\eqdef w_\tau \wedge M_\tau^n \comma \qquad &&\tau \in [0,T] \fstop
\end{align*}
We claim that~$v, w^n, \lambda^n$, and~$\mu$ satisfy the hypotheses of Corollary~\ref{cor:comp} on~$[s,t]$. Clearly~$v$ and~$w^n$ are Borel and satisfy~\eqref{eq:w1loc12}. Since~$w^n \le w$ and~$|w^n| \le |w|$, we have that~\eqref{eq:cer2} and~\eqref{eq:pbd12} are satisfied for~$\mu$. By assumption, we also have that~$\mu \in \mcC([s,t]; \msM_+(\R^d))$. Clearly~\eqref{eq:pbd12} for~$\lambda^n$ is satisfied because of the bounds on~$v$ and~$w$ coming from~\eqref{eq:w1loccone} and the fact that~$|X_{s \to \tau}(x)| \le n$ for every~$\tau \in [s,t]$ and every~$x \in E_n$.

Let us prove that $\lambda^{n} \in \mcC([s,t]; \msM_+(\R^d))$. Let~$\phi \in \Cb(\R^d)$. For every~$\tau_0 \in [s,t]$, we have
\begin{align*}
 \lim_{ \substack{\tau \to \tau_0 \\ \tau \in [s,t] }} \int_{\R^d} \phi \diff \lambda_\tau^n&= \lim_{\substack{\tau \to \tau_0 \\ \tau \in [s,t] }} \int_{\R^d} \phi(X_{s \to \tau}(x))  W_{s \to \tau}(x) \diff \mu_s \restr{E_n} (x) 
\\
&= \int_{\R^d} \phi(X_{s \to \tau_0}(x))  W_{s \to \tau_0}(x) \diff \mu_s \restr{E_n}(x) = \int_{\R^d} \phi \diff \lambda^n_{\tau_0} \comma
\end{align*}
where we have used the dominated convergence theorem with constant dominant function
\[
\|\phi\|_\infty e^{\int_0^T \sup_{\overline{B_n(0)}}|w_r| \diff r}  \in L^1(\R^d, \mu_s \restr{E_n})
\]
together with the continuity of~$\phi$,~$[s,t] \ni \tau \mapsto X_{s \to \tau}(x)$, and~$[s,t] \ni \tau \mapsto  W_{s \to \tau}(x)$. Hence,~$\lambda^n \in \mcC([s,t]; \msM_+(\R^d))$. 

To prove that~$\lambda^n$ satisfies~\eqref{eq:cer1}, it is enough to notice that for every~$\phi \in \mcC_c^\infty((s,t) \times \R^d)$ we have
\[
\int_s^t\int_{\R^d} |g^s_\phi(\tau,x)| \diff \mu_s \restr{E_n} (x) \diff \tau < \infty
\]
because of the bounds on~$v$ and~$w$ coming from~\eqref{eq:w1loccone} and the fact that~$|X_{s \to \tau}(x)| \le n$ for every~$\tau \in [s,t]$ and every~$x \in E_n$, and
\begin{align*}
 \int_{\R^{d}} \phi_{t} (x) \, \diff \lambda^{n}_{t} (x) - \int_{\R^{d}} \phi_{s} (x) \, \diff \lambda^{n}_{s} (x)  &= \int_{\R^d} \phi_t(X_{s \to t}(x)) W_{s \to t}(x)\diff \mu_s \restr{E_n}(x) - \int_{\R^d} \phi_s(x) \diff \mu_s \restr{E_n} (x)
\\
&= \int_{\R^d} \paren{ \phi_t(X_{s \to t}(x)) W_{s \to t}(x) - \phi_s(X_{s \to s}(x)) W_{s \to s}(x)  } \diff \mu_s \restr{E_n} (x)
\\
& = \int_{\R^d} \int_s^t g^s_\phi(\tau,x) \diff \tau  \diff \mu_s \restr{E_n} (x)
\\
& =\int_s^t \int_{\R^d}  g^s_\phi(\tau,x)  \diff \mu_s \restr{E_n} (x) \diff \tau
\\
& =\int_s^t \int_{\R^d}  \paren{ \partial_\tau \phi + \langle \nabla \phi, v_\tau \rangle + w_\tau \phi} \diff \lambda_\tau^n \diff \tau  
\\
& =\int_s^t \int_{\R^d}  \paren{ \partial_\tau \phi + \langle \nabla \phi, v_\tau \rangle + w^n_\tau \phi} \diff \lambda_\tau^n \diff \tau  \comma
\end{align*}
where we have used Fubini's theorem and the fact that~$w_\tau = w_\tau^n$ in~$\supp(\lambda_\tau^n)$ for every~$\tau \in [s,t]$. Since~$\lambda_s^n = \mu_s \restr{E_n} \le \mu_s$, Corollary~\ref{cor:comp} gives that
\[
\lambda^n_\tau \le \mu_\tau \comma \qquad \tau \in [s,t] \comma\ n \in \N \fstop
\]
Since~$E_n \uparrow E$ as~$n \to \infty$, monotone convergence gives~$\lambda^n_\tau \uparrow \lambda_\tau$, whence~$\lambda_\tau \le \mu_\tau$ for every~$\tau \in [s,t]$. In particular~$\lambda_t \le \mu_t$. Since~$\lambda_t$ is concentrated on~$X_{s \to t}(E)=F$, this proves~\eqref{eq:halfone}.

\medskip
\noindent\paragraph{Claim~2} For every~$0 \le t \le s \le T$ it holds
\begin{equation}\label{eq:halftwofull}
(X_{s \to t})_\sharp \left ( W_{s \to t} \,  \mu_s \restr{E_{s \to t}} \right ) \le \mu_t \restr{E_{t \to s}}.
\end{equation}

\smallskip

\noindent\paragraph{Proof of Claim~2} We reduce to Claim~1 by reversing time. For~$\theta \in [0,T]$ set
\[
\hat v_\theta \eqdef -v_{T-\theta} \comma \qquad \hat w_\theta \eqdef -w_{T-\theta} \comma \qquad \hat\mu_\theta \eqdef \mu_{T-\theta} \fstop
\]
It is clear that~$(\hat v, \hat w, \hat \mu)$ satisfies the assumptions of the present lemma. Now let~$\hat X$,~$\hat I$,~$\hat W$ and~$\hat E$ be associated with~$(\hat v, \hat w, \hat\mu)$ as~$X$,~$I$,~$W$ and~$E$ are with~$(v,w,\mu)$. For every~$x \in \R^d$ and~$a,b \in [0,T]$,
\begin{equation}\label{eq:dictionary}
\hat X_{a \to b}(x) = X_{T-a \to T-b}(x) \comma \qquad \hat I(a,x) = \set{ T-r : r \in I(T-a,x) } \comma
\end{equation}
\begin{equation}\label{eq:dictionaryE}
\hat E_{a \to b} = E_{T-a \to T-b} \fstop
\end{equation}
Indeed~$X_{T-a \to T-a}(x)=x$ and
\[
\tfrac{\diff}{\diff b} X_{T-a \to T-b}(x) = -v_{T-b}\big(X_{T-a \to T-b}(x)\big) = \hat v_b\big(X_{T-a \to T-b}(x)\big) \comma
\]
which gives the first identity; the second follows, and \eqref{eq:dictionaryE} is a restatement of it, because~$b \in \hat I(a,x)$ if and only if~$T-b \in I(T-a,x)$. Consequently
\begin{equation}\label{eq:dictionaryW}
\hat W_{a \to b}(x) = e^{\int_a^b \hat w_r(\hat X_{a \to r}(x)) \diff r} = W_{T-a \to T-b}(x) \comma
\end{equation}
as one sees substituting~$\rho = T-r$, which turns
\[
 \int_a^b \big(-w_{T-r}\big)\big(X_{T-a \to T-r}(x)\big) \diff r \quad \text{into} \quad \int_{T-a}^{T-b} w_\rho\big(X_{T-a \to \rho}(x)\big) \diff \rho \fstop 
\]

Let~$0 \le t \le s \le T$ be given and set~$s' \eqdef T-s$ and~$t' \eqdef T-t$, so that~$0 \le s' \le t' \le T$. Applying Claim~1 to~$(\hat v, \hat w, \hat \mu)$ and to the pair~$s' \le t'$ we obtain
\[
(\hat X_{s' \to t'})_\sharp \left ( \hat W_{s' \to t'}\, \hat\mu_{s'} \restr{\hat E_{s' \to t'}} \right ) \le \hat\mu_{t'} \restr{\hat E_{t' \to s'}} \fstop
\]
By~\eqref{eq:dictionary},~\eqref{eq:dictionaryE} and~\eqref{eq:dictionaryW},
\begin{align*}
\hat X_{s' \to t'} &= X_{s \to t} \comma &\hat W_{s' \to t'} &= W_{s \to t} \comma &\hat\mu_{s'} &= \mu_s \comma
\\
\hat\mu_{t'} &= \mu_t \comma &\hat E_{s' \to t'} &= E_{s \to t} \comma &\hat E_{t' \to s'} &= E_{t \to s} \comma
\end{align*}
so that the inequality above is precisely~\eqref{eq:halftwofull}.

\medskip

\noindent\paragraph{Conclusion} Claims~1 and~2 together state that
\begin{equation}\label{eq:halfany}
(X_{s \to t})_\sharp \left ( W_{s \to t} \, \mu_s \restr{E_{s \to t}} \right ) \le \mu_t \restr{E_{t \to s}} \qquad \text{for every } s,t \in [0,T] \fstop
\end{equation}
It remains to prove the converse; let~$s,t \in [0,T]$ be fixed. Applying~\eqref{eq:halfany} to the pair~$(t,s)$ we have
\begin{equation}\label{eq:halfonefullts}
(X_{t \to s})_\sharp \left ( W_{t \to s}\, \mu_t \restr{E_{t \to s}} \right ) \le \mu_s \restr{E_{s \to t}} \fstop
\end{equation}
Multiplying by~$W_{s \to t}$ both sides and applying the push forward by the map~$X_{s \to t}$ gives
\[
(X_{s \to t})_\sharp \braket{ W_{s \to t} \cdot (X_{t \to s})_\sharp \left ( W_{t \to s} \, \mu_t \restr{E_{t \to s}} \right )} \le (X_{s \to t})_\sharp \left ( W_{s \to t} \,  \mu_s \restr{E_{s \to t}} \right ) \fstop
\]
Thanks to~\eqref{eq:cocycleW} the left-hand side above rewrites as
\[
\braket{W_{s \to t } \circ X_{t \to s}} \cdot \braket{X_{s \to t} \circ X_{t \to s}}_\sharp \left ( W_{t \to s}\,  \mu_t \restr{E_{t \to s}} \right ) = W_{t \to t} \,  \mu_t \restr{E_{t \to s}} = \mu_t \restr{E_{t \to s}} \fstop
\]
Thus, we deduce
\[
\mu_t \restr{E_{t \to s}} \le (X_{s \to t})_\sharp \left ( W_{s \to t} \, \mu_s \restr{E_{s \to t}} \right )\comma
\]
which is the converse to~\eqref{eq:halfany}.
\end{proof}

\subsection{Proof of Theorem~\ref{thm:supercone}} We are now in a position to prove Theorem~\ref{thm:supercone}. In what follows, we are going to use the notation for curves in the cone introduced in Section~\ref{ssec:curvcone}, and in particular the objects defined in~\eqref{eq:notcurves}.

 We may assume that~$\mu$ does not vanish identically. Since~$\int_0^T \mu_t(\R^d) \diff t < \infty$ by the continuity of~$\mu$, the Cauchy--Schwarz inequality gives
\[
\int_0^T \int_{\R^d} \braket{|v_t| + |w_t|} \diff \mu_t \diff t \le \sqrt{2} \paren{ \int_0^T \int_{\R^d} \braket{|v_t|^2 + |w_t|^2} \diff \mu_t \diff t }^{1/2} \paren{ \int_0^T \mu_t(\R^d) \diff t }^{1/2} \comma
\]
so that~\eqref{eq:pbdcone} implies~\eqref{eq:pbdlocle}. Together with~\eqref{eq:w1loccone} and~\eqref{eq:cercone}, this shows that~$(v,w, \mu)$ satisfies the assumptions of Lemma~\ref{le:twotime}, whose notation we keep throughout. We define
\begin{align*}
\Theta &\eqdef \int_0^T (\delta_t \otimes \mu_t) \diff t \in \msM_+([0,T] \times \R^d) \comma
\\
\mfm(s,x) &\eqdef \tfrac{ \inf I(s,x) + \sup I(s,x) }{2} \in [0,T] \comma \qquad (s,x) \in [0,T] \times \R^d \comma
\\
S &\eqdef \set{ (s,x) \in [0,T] \times \R^d : \mfm(s,x) = s } \subset [0,T] \times \R^d \comma
\\
\Psi &\colon [0,T]\times\R^d \to S \comma \qquad \Psi(s,x) \eqdef \big( \mfm(s,x),\, X_{s \to \mfm(s,x)}(x) \big) \comma
\\
C(s,x) &\eqdef \int_{I(s,x)} W_{s \to u}(x) \diff u \in (0,\infty] \comma \qquad (s,x) \in [0,T] \times \R^d \fstop
\end{align*}
We observe that~$\Theta$ is a finite (non-zero, non-negative Borel) measure since~$\Theta([0,T] \times \R^d) = \int_0^T \mu_t(\R^d) \diff t \in (0,\infty)$, by the continuity of~$\mu$. The map~$\mfm$ is Borel by Lemma~\ref{le:flowreg}(2) and obviously satisfies~$\mfm(s,x) \in I(s,x)$ for every~$(s,x) \in [0,T] \times \R^d$; this remark and the first part of Lemma~\ref{le:flowreg} give the measurability and well definition of the set~$S$ and the map~$\Psi$. On the other hand, the map~$C$ is Borel by Lemma~\ref{le:borelmaps}(2) applied with~$f \equiv 1$. By~\eqref{eq:cocycleW}, for every~$(s,x) \in [0,T] \times \R^d$ and~$r \in I(s,x)$
\begin{equation}\label{eq:Ccocycle}
C(s,x) = \int_{I(s,x)} W_{s \to r}(x)\, W_{r \to u}(X_{s \to r}(x)) \diff u = W_{s \to r}(x)\, C(r,X_{s \to r}(x)) \fstop
\end{equation}
We further observe that
\begin{align*}
\int_{\R^d} C(t,x) \diff \mu_t(x) &= \int_0^T \int_{\R^d} W_{t \to u}(x)\, \car_{  E_{t\to u}  }(x) \diff \mu_t(x) \diff u
\\
&= \int_0^T \mu_u\big(  E_{u \to t}  \big) \diff u \le \int_0^T \mu_u(\R^d) \diff u < \infty \comma
\end{align*}
by the definition of~$C$, by Tonelli's theorem, and by~\eqref{eq:twotime} applied to the pair~$(t,u)$ and tested with the constant function~$1$. In particular, we deduce
\begin{equation}\label{eq:Cfinite}
C(t,x) \in (0,\infty) \quad \text{for~$\mu_t$-a.e.~} x \in \R^d \fstop
\end{equation}
 By~\eqref{eq:Ccocycle} applied to~$(s,x) \in [0,T] \times \R^d$ and~$r=\mfm(s,x) \in I(s,x)$, we get that
\[
C(s,x) = W_{s \to \mfm(s,x)}(x) (C \circ \Psi)(s,x)
\]
which implies that
\begin{equation}\label{eq:Cfiniterho}
C \in (0,\infty) \quad \Psi_\sharp \Theta\text{-a.e.}
\end{equation}
This justifies the definition of the following non-negative, non-zero (possibly infinite) Borel measure on~$[0,T] \times \R^d$
\[
\rho \eqdef C^{-1}\Psi_\sharp \Theta \fstop
\]
Since~$\Psi_\sharp \Theta$ is a finite measure and~$C \in (0,\infty)$~$\Psi_\sharp \Theta$-a.e., then~$\rho$ is~$\sigma$-finite. Hence, there exists a Borel function~$\theta \colon [0,T] \times \R^d \to (0,\infty)$ such that~$\int \theta^2 \diff \rho =1$. For every~$(s,x) \in [0,T] \times \R^d$, we define the curve $\mssT(s, x) \colon [0, T] \to \mfC[\R^{d}]$ as
\[
\mssT(s,x)_\tau \eqdef
\begin{cases}
\big[\, X_{s \to \tau}(x)\,,\ W_{s \to \tau}^{1/2}(x)/\theta(s,x) \,\big] & \text{if } \tau \in I(s,x)\comma
\\[3pt]
\mfo & \text{otherwise,}
\end{cases}
\qquad \tau \in [0,T]
\]
and we set
\begin{equation}\label{eq:defenergy}
\mcE(s,x) \eqdef \int_{I(s,x)} \braket{ \big|v_\tau(X_{s \to \tau}(x))\big|^2 + \tfrac14 \big|w_\tau(X_{s \to \tau}(x))\big|^2 } W_{s \to \tau}(x) \diff \tau \in [0,\infty] \comma
\end{equation}
which is a Borel function of~$(s,x)$ by Lemma~\ref{le:borelmaps}(2) applied with~$f \eqdef |v|^2 + \tfrac14 |w|^2$.

\medskip

\noindent\paragraph{Claim~1} We have
\begin{equation}\label{eq:energyae}
\mcE(s,x) < \infty \qquad \text{for~$\rho$-a.e.~} (s,x) \in [0,T] \times \R^d \fstop
\end{equation}

\smallskip

\noindent\paragraph{Proof of Claim~1} We test~\eqref{eq:twotime} for the pair~$s , \tau \in [0, T]$ with the non-negative Borel function~$|v_\tau|^2 + \tfrac14 |w_\tau|^2$. Using~$\mu_\tau \restr{ E_{\tau \to s} } \le \mu_\tau$ we get
\[
\int_{E_{s \to \tau}} \braket{ |v_\tau|^2 \big(X_{s \to \tau}(x)\big)+ \tfrac14 |w_\tau|^2\big(X_{s \to \tau}(x)\big) }\, W_{s \to \tau}(x) \diff \mu_s(x) \le \int_{\R^d} \braket{ |v_\tau|^2 + \tfrac14 |w_\tau|^2 } \diff \mu_\tau \fstop
\]
Integrating in~$\tau \in (0,T)$ and using Tonelli's theorem, we infer from~\eqref{eq:pbdcone} that
\begin{equation}\label{eq:energybound}
\int_{\R^d} \mcE(s,x) \diff \mu_s(x) \le \int_0^T \int_{\R^d} \braket{ |v_\tau|^2 + \tfrac14 |w_\tau|^2 } \diff \mu_\tau \diff \tau < \infty \comma \qquad s \in [0,T] \fstop
\end{equation}
 In particular~$\mcE < \infty$~$\Theta$-a.e.; moreover, exactly as in~\eqref{eq:Ccocycle}, the invariance of~$I$ and~$X$ along trajectories and~\eqref{eq:cocycleW} give
\begin{equation}\label{eq:energycocycle}
\mcE\big(r, X_{s \to r}(x)\big) = W^{-1}_{s \to r}(x)\, \mcE(s,x) \comma \qquad r \in I(s,x) \fstop
\end{equation}
 Applying this formula to~$(s,x) \in [0,T] \times \R^d$ and~$r=\mfm(s,x) \in I(s,x)$, we get that
\[
\mcE(s,x) = W_{s \to \mfm(s,x)}(x) (\mcE \circ \Psi)(s,x)
\]
which implies that
\begin{equation}\label{eq:Efiniterho}
\mcE < \infty \quad \Psi_\sharp \Theta\text{-a.e.}
\end{equation}
hence the claim. 

\medskip

\noindent\paragraph{Claim~2} For~$\rho$-a.e.~$(s,x) \in [0,T] \times \R^d$ the curve~$\mssT(s,x)$ is continuous and the map~$\mssT \colon [0,T] \times \R^d \to \cic$ is Borel. Consequently,~$\eta \eqdef \mssT_{\sharp} (\theta^{2} \rho)$ belongs to~$\mcP(\cic)$.

\smallskip

\noindent\paragraph{Proof of Claim~2} We fix~$(s,x)$ such that~$\mcE(s,x)<\infty$, which holds for~$\rho$-a.e.~$(s, x) \in [0, T] \times \R^{d}$ by Claim~1. We abbreviate
\[
J \eqdef I(s,x) \comma \qquad a \eqdef \inf J \comma \qquad b \eqdef \sup J \comma \qquad \mathfrak{w}(\tau) \eqdef W^{1/2}_{s \to \tau}(x) \comma \quad \tau \in J \fstop
\]
Since~$\tau \mapsto X_{s \to \tau}(x)$ and~$\tau \mapsto W_{s \to \tau}(x)$ are continuous in~$J$ and~$W_{s\to\tau}(x)>0$ in $J$, the curve~$\mssT(s,x)$ is continuous at every point of~$J$, and it is constantly equal to~$\mfo$ in the interior of~$[0,T] \setminus \overline{J}$. It therefore suffices to prove that
\begin{equation}\label{eq:vanishatends}
\lim_{\substack{\tau \to c \\ \tau \in J}} \mathfrak{w}(\tau) = 0 \qquad \text{for every endpoint } c \in \set{a,b} \text{ with } c \notin J \fstop
\end{equation}
We prove~\eqref{eq:vanishatends} for~$c=b \notin J$, the argument at~$c=a$ being identical.

Since~$\mathfrak{w}$ is locally absolutely continuous on~$J$ with~${\mathfrak{w}}'(\tau) = \tfrac12 w_\tau(X_{s \to \tau}(x))\, \mathfrak{w}(\tau)$, we have
\begin{equation}\label{eq:sqrtWH1}
\int_J |{\mathfrak{w}}'|^2(\tau) \diff \tau = \frac14 \int_J \big|w_\tau(X_{s \to \tau}(x))\big|^2 W_{s \to \tau}(x) \diff \tau \le \mcE(s,x) < \infty \fstop
\end{equation}
As~$J$ is a bounded interval, \eqref{eq:sqrtWH1} implies that~$\mathfrak{w}$ is absolutely continuous on~$J$ and admits a finite limit
\[
\mathfrak{w}(b^-) \eqdef \lim_{ \substack{\tau \uparrow b \\ \tau \in J}} \mathfrak{w}(\tau) \in [0,\infty) \fstop
\]
Assume by contradiction that~$m\eqdef \mathfrak{w}(b^-)> 0$. Then there is~$\tau_0 \in J$ such that~$W_{s \to \tau}(x) = \mathfrak{w}^2(\tau) \ge m^2/2$ for every~$\tau \in [\tau_0,b)$, whence, by the definition of the flow and by definition~\eqref{eq:defenergy} of $\mathcal{E}$,
\[
\int_{\tau_0}^{b} \Big| X_{s \to \tau}'(x) \Big|^2 \diff \tau = \int_{\tau_0}^{b} \big| v_\tau(X_{s \to \tau}(x)) \big|^2 \diff \tau \le \frac{2}{m^2} \int_{\tau_0}^b \big| v_\tau(X_{s \to \tau}(x)) \big|^2 W_{s \to \tau}(x) \diff \tau \le \frac{2\, \mcE(s,x)}{m^2} \fstop
\]
By the Cauchy--Schwarz inequality, $\tau \mapsto X_{s \to \tau}(x)$ has integrable derivative in~$(\tau_0,b)$. It is therefore absolutely continuous in~$(\tau_0,b)$ and admits a limit~$z \in \R^d$ as~$\tau \uparrow b$. Setting~$X_{s \to b}(x) \eqdef z$ we obtain an absolutely continuous map on~$[\tau_0,b]$ which satisfies the integral formulation of the characteristic system on~$[\tau_0,b]$, i.e.~the solution is defined at time~$b$ as well. This contradicts the maximality of~$J = I(s,x)$, since~$b \notin J$. Hence, it must be~$\mathfrak{w}(b^-)=0$, which proves~\eqref{eq:vanishatends} and the continuity of~$\tau \mapsto \mssT(s,x)_{\tau}$ in~$[0, T]$.

We now prove that~$\mssT$ is Borel. Let~$G \eqdef \set{ \mcE < \infty }$, which is a Borel set of full~$\rho$-measure by~\eqref{eq:energyae} and on which, by the first part of the proof,~$\mssT$ takes values in~$\cic$. We fix~$\tau \in [0,T]$ and prove that~$(s,x) \mapsto \mssT(s,x)_\tau$ is Borel on~$G$. The set~$G_\tau \eqdef \set{ (s,x) \in G : \tau \in I(s,x) }$ is Borel, being the intersection of~$G$ with a section of the open set~$\mcD$ of~\eqref{eq:defD}. On~$G \setminus G_\tau$ we have $\mssT(s, x)_{\tau} = \mfo$, while on~$G_\tau$ it is the composition of the Borel map (cf.~Lemma~\ref{le:flowreg}, Lemma~\ref{le:borelmaps}(1), and the Borel measurability of~$\theta$)
\[
(s,x) \longmapsto \big( X_{s \to \tau}(x),\, W^{1/2}_{s \to \tau}(x)/\theta(s,x) \big) \in \R^d \times [0,\infty)
\]
with the continuous projection~$(y,r) \mapsto [y,r]$ onto~$\mfC[\R^d]$. Since~$\mfC[\R^d]$ is separable, Lemma~\ref{le:curvemeas} applies and gives that~$\mssT \colon [0,T] \times \R^d \to \cic$ is Borel.

 Finally,~$\theta^2 \rho$ is a probability measure by the choice of~$\theta$.

\medskip

\noindent\paragraph{Claim~3} We have $h_t^2(\eta) = \mu_t$ for every~$t \in [0,T]$.

\smallskip

\noindent\paragraph{Proof of Claim~3} We fix~$t \in [0,T]$ and~$\phi \in \mcC_b(\R^d)$. Since two positive measures which agree on every nonnegative~$\phi \in \mcC_b(\R^d)$ coincide, we may assume~$\phi \ge 0$, so that all the integrands below are nonnegative and Tonelli's theorem applies. Using the definition of $\eta$ and $\rho$, property~\eqref{eq:Ccocycle}, Lemma~\ref{le:twotime}, and Tonelli's theorem, we get
\begin{align}
\label{e:htmu}
\int_{\R^d} \phi \diff h_t^2(\eta) &= \int_{\mcC_{T}} \phi(x_\gamma(t))r^2_\gamma(t) \diff \eta(\gamma) 
\\
&=  \int_{[0, T] \times \R^{d}} \phi(X_{s \to t}(x)) W_{s \to t}(x) \theta^{-2}(s,x) \theta^2(s,x) \car_{I(s,x)}(t) \diff \rho(s,x) \nonumber
\\
&= \int_{[0, T] \times \R^{d}} \biggl [ \phi(X_{\mfm(s,x) \to t}(X_{s \to \mfm(s,x)}(x))) W_{\mfm(s,x) \to t}(X_{s \to \mfm(s,x)}(x)) \nonumber
\\
& \quad \quad \quad C^{-1}(\mfm(s,x),X_{s \to \mfm(s,x)}(x)) \car_{I(\mfm(s,x),X_{s \to \mfm(s,x)}(x))}(t) \biggr ] \diff \Theta (s,x)\nonumber
\\
&=  \int_{[0, T] \times \R^{d}} \phi (X_{s \to t}(x)) W_{s \to t}(x) C^{-1}(s,x) \car_{I(s,x)}(t) \diff \Theta (s,x)\nonumber
\\
&= \int_0^T \int_{\R^d} \phi (X_{s \to t}(x)) W_{s \to t}(x) C^{-1}(s,X_{t \to s}(X_{s \to t}(x))) \diff \mu_s \restr{ E_{s \to t} } (x) \diff s\nonumber
\\
&= \int_0^T \int_{\R^d} \phi(x) C^{-1}(s,X_{t \to s}(x)) \diff \mu_t \restr{ E_{t \to s} }(x) \diff s\nonumber
\\
&= \int_{\R^d} \phi(x) \paren{ \int_0^T C^{-1}(s,X_{t \to s}(x))\, \car_{I(t,x)}(s) \diff s } \diff \mu_t(x)\nonumber
\\
&= \int_{\R^d} \phi(x) \paren{ \int_{I(t,x)} \frac{ W_{t \to s}(x) }{ C(t,x) } \diff s } \diff \mu_t(x) = \int_{\R^d} \phi \diff \mu_t \fstop\nonumber
\end{align}
Since~$\phi$ is arbitrary, this proves Claim~3.

\medskip

\noindent\paragraph{Claim~4} The measure $\eta$ is concentrated on curves~$\gamma \in\acic$ satisfying~\eqref{eq:ode}. \smallskip

\noindent\paragraph{Proof of Claim~4} We fix~$(s,x)$ with~$\mcE(s,x)<\infty$ and we set~$\gamma \eqdef \mssT(s,x)$; on~$I(s,x)$ we have~$x_\gamma(\tau) = X_{s \to \tau}(x)$ and~$r_\gamma(\tau) = W^{1/2}_{s \to \tau}(x)/\theta(s,x)>0$, both locally absolutely continuous, with
\[
|x_\gamma'|(\tau) = \big| v_\tau(x_\gamma(\tau)) \big| \comma \qquad  r_\gamma'(\tau) =\frac{1}{2} w_\tau(x_\gamma(\tau))\, r_\gamma(\tau) \comma \qquad k_\gamma'(\tau) = w_\tau(x_\gamma(\tau))\, k_\gamma(\tau)
\]
 for a.e.~$\tau \in I(s,x)$. Hence, by~\eqref{eq:metricderivative}, we have
\begin{equation}\label{eq:speedgamma}
|\gamma'|^2(\tau) = \theta^{-2}(s,x) \braket{ \tfrac14 \big| w_\tau(X_{s \to \tau}(x)) \big|^2 + \big| v_\tau(X_{s \to \tau}(x)) \big|^2 } W_{s \to \tau}(x) 
\end{equation}
for a.e.~$\tau \in I(s,x)$, so that~$\int_0^T|\gamma'|^2 \diff \tau = \int_{I(s,x)} |\gamma'|^2 \diff \tau = \theta^{-2}(s,x)\, \mcE(s,x) < \infty$, by definition of~$\mcE$ in~\eqref{eq:defenergy}, and the fact that~$|\gamma'|$ vanishes in~$[0,T] \setminus \overline{I(s,x)}$. By Claim~1, we deduce that for~$\rho$-a.e.~$(s,x) \in [0,T] \times \R^d$ the curve $\mssT(s,x)$ belongs to~$\acic$ and satisfies~\eqref{eq:ode}. This proves the claim and concludes the proof of Theorem~\ref{thm:supercone}.

\begin{remark}
The cross-section~$\mfm$ and the weight~$C^{-1}$ are only needed because trajectories may be born or may die inside~$[0,T]$. Assume indeed that the flow of~$v$ is globally defined, i.e.~that~$I(0,x)=[0,T]$ for~$\mu_0$-a.e.~$x \in \R^d$, and that~$m \eqdef \mu_0(\R^d) \in (0,\infty)$. Then, every trajectory is alive at time~$0$, and crosses that time exactly once: the time~$0$ is itself a cross-section of the family of trajectories, and no reference time has to be selected along the way. Accordingly, one may simply set
\[
\eta \eqdef (\mssT^0)_\sharp \paren{ m^{-1} \mu_0 } \comma \qquad \mssT^0(x)_\tau \eqdef \big[\, X_{0 \to \tau}(x) \,,\ \big( m\, W_{0 \to \tau}(x) \big)^{1/2} \,\big] \comma \quad \tau \in [0,T] \fstop
\]
Indeed~$\eta$ is a probability measure, its curves never reach the vertex, and~$E_{0 \to t} = E_{t \to 0} = \R^d$ for every~$t \in [0,T]$, so that~\eqref{eq:htdef} and~\eqref{eq:twotime} with~$s=0$ give
\[
\int_{\R^d} \phi \diff h_t^2(\eta) = \int_{\R^d} \phi\big( X_{0 \to t}(x) \big)\, W_{0 \to t}(x) \diff \mu_0(x) = \int_{\R^d} \phi \diff \mu_t \comma \qquad \phi \in \Cb(\R^d) \fstop
\]
 In the next Lemma~\ref{le:global} we show that a global bound on~$w$ together with a local one on~$v$ is enough to guarantee that the flow is globally defined.
\end{remark}

\begin{lemma}\label{le:global} Let~$v\colon [0,T] \times \R^d \to \R^d$,~$w\colon [0,T] \times \R^d \to \R$ be Borel maps, and let~$\mu \in \mcC([0,T]; \msM_+(\R^d))$ be satisfying~\eqref{eq:cercone},~\eqref{eq:pbdlocle}, and 
\begin{align} 
\label{eq:w1locconeglob}
&\int_0^T \braket{ \sup_{B} |v_t| + \Lip(v_t, B) + \sup_{\R^d}|w_t| +\Lip(w_t, B) } \diff t < \infty \comma B \Subset \R^d \fstop
\end{align} 
Then for~$\mu_0$-a.e.~$x \in \R^d$ the flow~$X$ of~$v$ is globally defined. In particular
\begin{equation}\label{eq:reprglob}
\mu_t = (X_{0 \to t})_\sharp \paren{ W_{0\to t} \, \mu_0 \restr{E_{0 \to t}} }\comma \quad t \in [0,T] \fstop
\end{equation}
\end{lemma}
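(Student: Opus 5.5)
The plan is to compare total masses via the two-time representation of Lemma~\ref{le:twotime}, which applies since \eqref{eq:w1locconeglob} implies \eqref{eq:w1loccone}. Set $M \eqdef \int_0^T \sup_{\R^d}|w_t|\diff t<\infty$. Then for every $(s,x,\tau)\in\mcD$ we have $e^{-M}\le W_{s\to\tau}(x)\le e^{M}$. I will show that the set $N_0\eqdef \R^d\setminus\bigcap_{t\in[0,T]}E_{0\to t}$ of initial points whose trajectory dies inside $[0,T]$ is $\mu_0$-negligible. Once this is done, $E_{0\to t}$ has full $\mu_0$-measure for all $t$. Formula \eqref{eq:reprglob} then does not even need the a.e.\ statement: it is \eqref{eq:twotime} with $s=0$, provided we also know $\mu_t(\R^d\setminus E_{t\to 0})=0$.

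\emph{Step 1: mass born after time $0$ vanishes.} Fix $t$ and apply \eqref{eq:twotime} to the pair $(t,s)$ with $s\in[0,t]$, testing with the function $1$:
\[
\mu_s(E_{s\to t})=\int_{E_{t\to s}}W_{t\to s}\diff\mu_t\ge e^{-M}\mu_t(E_{t\to s}).
\]
The sets $E_{t\to s}$ increase to $\set{y: \inf I(t,y)<s'}$-type sets as $s\uparrow t$, so I will instead use the decreasing family. For $y\notin E_{t\to0}$ the trajectory through $(t,y)$ is born at $a(y)\eqdef\inf I(t,y)\in[0,t)$, with $a(y)\notin I(t,y)$. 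By Claim~2 in the proof of Theorem~\ref{thm:supercone}, $W_{t\to\tau}(y)\to0$ as $\tau\downarrow a(y)$ whenever the energy is finite. Here, however, $W\ge e^{-M}$ forbids this, and $X$ must blow up instead (finite-time blow-up with $r$ bounded below). I therefore argue directly. Let $A\eqdef\R^d\setminus E_{t\to0}$ and push $\mu_t\restr{A}$ backward to times $\tau\downarrow a(y)$. Using \eqref{eq:twotime} for the pair $(t,\tau)$ restricted to $A$, and tested with $\car_{\R^d\setminus B_R(0)}$, the measures $\mu_\tau$ carry mass at least $e^{-M}\mu_t(A\cap\set{a<\tau'})$ outside arbitrarily large balls, at times close to $a$. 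To make this quantitative I will integrate in time. Using $|X'|=|v|$, the trajectory must travel from $|X|\le R_0$ to infinity while $W\ge e^{-M}$. By Tonelli and \eqref{eq:twotime}, the $\mu$-weighted integral $\int_0^T\!\int|v_\tau|\diff\mu_\tau\diff\tau$ dominates $e^{-M}\int_A\int_{I(t,y)}|v_\tau(X_{t\to\tau}(y))|\diff\tau\diff\mu_t(y)$. This inner integral is infinite whenever the trajectory escapes to infinity. Since \eqref{eq:pbdlocle} makes the left side finite, it follows that $\mu_t(A)=0$. The only alternative to escape is that $X$ stays bounded near $a(y)$; but then local integrability of $\sup_B|v|$ extends the solution past $a(y)$, and if $a(y)>0$ this contradicts maximality. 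So $a(y)>0$ forces escape, and $\mu_t(\R^d\setminus E_{t\to0})=0$.

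\emph{Step 2: no death before $T$.} The symmetric argument, applied to the pair $(0,\tau)$ with $x\in N_0$ and $b(x)\eqdef\sup I(0,x)\le T$, $b\notin I(0,x)$, gives
\[
\int_{N_0}\int_{I(0,x)}|v_\tau(X_{0\to\tau}(x))|W_{0\to\tau}(x)\diff\tau\diff\mu_0\le\int_0^T\!\!\int|v_\tau|\diff\mu_\tau\diff\tau<\infty.
\]
Since $W\ge e^{-M}$ and $\int_{I(0,x)}|X'|=\infty$ for every $x\in N_0$ (the blow-up alternative again), we get $\mu_0(N_0)=0$. This is the global existence claim, and \eqref{eq:reprglob} then follows from \eqref{eq:twotime} with $s=0$ together with Step~1.

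The main obstacle is the \emph{blow-up alternative}: if the maximal interval ends at an interior time $b$ (or at $a>0$), then $\int|v_\tau(X_\tau)|\diff\tau=\infty$ near that endpoint. I will prove its contrapositive. If $\int_{\tau_0}^b|v_\tau(X_\tau)|\diff\tau<\infty$, then $X$ has a limit at $b$, exactly as in the proof of Claim~2 of Theorem~\ref{thm:supercone}. The solution then extends past $b$ using Lemma~\ref{le:flowreg}, which contradicts maximality.
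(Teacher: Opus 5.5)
Your proof is correct and your Step~2 is the paper's argument: Tonelli, \eqref{eq:twotime} and the bound $W\ge e^{-M}$ give $\int_{I(0,x)}|v_\tau(X_{0\to\tau}(x))|\diff\tau<\infty$ for $\mu_0$-a.e.\ $x$. The blow-up alternative then forces $I(0,x)=[0,T]$; the paper invokes it as boundedness of $X_{0\to\cdot}(x)$ plus \cite[Lemma 8.1.4]{AmbGigSav08}. You also need the alternative when the missing endpoint is $0$ or $T$, where one extends \emph{to} the endpoint rather than past it.

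Your Step~1 runs the same estimate backward from time $t$ to get $\mu_t(\R^d\setminus E_{t\to 0})=0$, and this is a worthwhile addition. The paper's proof only proves the forward statement and leaves this fact implicit, yet it is exactly what is needed to pass from \eqref{eq:twotime} with $s=0$ to \eqref{eq:reprglob}.
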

\begin{proof} For every~$x \in \R^d$, set~$\tau(x)\eqdef \sup I(0,x)$. By Lemma~\ref{le:twotime}, we have
\[
(X_{0 \to t})_\sharp \paren{ W_{0\to t} \, \mu_0 \restr{E_{0 \to t}} } = \mu_t \restr{ E_{t \to 0}} \le \mu_t\comma \quad t \in [0,T] \fstop
\]
Using this inequality, we compute
\begin{align*}
\int_{\R^d} & \sup_{t \in [0,\tau(x))} |X_{0 \to t}(x)-x| \diff \mu_0(x) 
\\
& \le \int_{\R^d} \int_0^{\tau(x)} |v_t(X_{0 \to t}(x))| \diff t \diff \mu_0(x)
\\
& = \int_{\R^d} \int_0^{\tau(x)} |v_t(X_{0 \to t}(x))| e^{\int_0^t w_r(X_{0 \to r}(x)) \diff r} e^{-\int_0^t w_r(X_{0 \to r}(x)) \diff r}  \diff t \diff \mu_0(x)
\\
& \le  e^{\int_0^T \|w_r\|_\infty \diff r} \int_{\R^d} \int_0^{\tau(x)} |v_t(X_{0 \to t}(x))| W_{0 \to t}(x)   \diff t  \diff \mu_0(x)
\\
& = e^{\int_0^T \|w_r\|_\infty \diff r} \int_0^T \int_{\R^d}  |v_t(X_{0 \to t}(x))| W_{0 \to t}(x)    \diff \mu_0 \restr{E_{0 \to t}} (x) \diff t
\\
& \le e^{\int_0^T \|w_r\|_\infty \diff r} \int_0^T \int_{\R^d} |v_t|   \diff \mu_t \diff t < \infty \fstop
\end{align*}
This proves that for~$\mu_0$-a.e.~$x \in \R^d$, the map~$[0, \tau(x)) \ni t \mapsto X_{0 \to t}(x)$ is bounded; thus, by e.g.~\cite[Lemma 8.1.4]{AmbGigSav08}, it must be~$\tau(x)=T$.
\end{proof}

\begin{remark}
In view of Example~\ref{example:intro}, assumption~\eqref{eq:w1locconeglob} in Lemma~\ref{le:global} is essentially sharp. Indeed, relaxing it, it is possible to have that mass is created or destroyed during the evolution, so that uniqueness is lost, together with the representation formula~\eqref{eq:reprglob}. 
\end{remark}

\section{Proof of Theorem~\ref{thm:superconegen}}\label{s:general}

The local Lipschitz bounds~\eqref{eq:w1loccone} were used in Section~\ref{s:repr} only to construct the characteristic flow of~$v$, which is the core of the proof of Theorem~\ref{thm:supercone}. In this section we remove assumption~\eqref{eq:w1loccone}, obtaining the more general superposition principle of Theorem~\ref{thm:superconegen}. In what follows we make use of the notation for curves in the cone introduced in Section~\ref{ssec:curvcone}, and in particular the objects defined in~\eqref{eq:notcurves}, as well as the functionals defined in~\eqref{eq:action} and~\eqref{e:Gzeta}.

The proof of Theorem~\ref{thm:superconegen} follows the structure of~\cite[Theorem~8.2.1]{AmbGigSav08}. Namely, we first consider a mollification of $(v, w, \mu)$ for which the assumptions of Theorem~\ref{thm:supercone} hold and a representation is given in the form of Theorem~\ref{thm:supercone}(1)--(2). Then, we pass to the limit in the mollification parameter $\eps \to 0$, recovering a representation of~$\mu$. 

\begin{proof}[Proof of Theorem~\ref{thm:superconegen}]
We fix a mollifier~$\varrho \in \mcC^\infty_c(\R^d)$ with~$\varrho \ge 0$,~$\supp \varrho \subset B_1(0)$ and~$\int_{\R^d} \varrho \diff x=1$, we set~$\varrho_\eps(x) \eqdef \eps^{-d}\varrho(x/\eps)$ for~$\eps \in (0,1]$, and we denote by
\[
\mssg(x) \eqdef (2\pi)^{-d/2} e^{-|x|^2/2} \comma \qquad x \in \R^d \comma
\]
the standard Gaussian density; we write~$\mssg$ also for the measure~$\mssg \mcL^d \in \mcP(\R^d)$.

If~$\mu_t = 0$ for every~$t$, it is enough to take~$\eta \eqdef \delta_{\bar\gamma}$, where~$\bar\gamma$ is the constant curve equal to~$\mfo$. We may therefore assume that~$\mu$ does not vanish identically. We set
\begin{align}
\label{e:some-names}
\mcA \eqdef \int_0^T \int_{\R^d} \braket{ |v_t|^2 + |w_t|^2 } \diff\mu_t \diff t < \infty \comma \qquad \bar m \eqdef \max_{t \in [0,T]} \mu_t(\R^d) \in (0,\infty) \comma
\end{align}
and we let~$K \eqdef \int_0^T \mu_t(\R^d)\diff t \in (0,\infty)$. The Cauchy--Schwarz inequality gives
\begin{equation}\label{eq:firstpowergen}
\int_0^T \int_{\R^d} \braket{ |v_t| + |w_t| } \diff\mu_t \diff t \le\sqrt{ 2  \mcA K } < \infty \fstop
\end{equation}
We denote by~$\Theta \eqdef \int_0^T (\delta_t \otimes \mu_t)\diff t$, so that~\eqref{eq:firstpowergen} implies~$v \in L^1([0, T] \times \R^{d}, \Theta; \R^{d})$ and $w \in L^1([0, T] \times \R^{d}, \Theta)$. 

We divide the proof in 5 claims. Claim 1 is devoted to the construction of suitable regularized fields~$v^{\eps}$ and~$w^{\eps}$ and a curve of measures~$\mu^{\eps}$ approximating~$\mu$. In Claim 2 we give a representation of~$\mu^{\eps}$ in terms of a positive bounded measure~$\eta^{\eps}$ relying on Theorem~\ref{thm:supercone}. In Claim 3 we show that~$\eta^{\eps}$ is tight, and thus admits a limit~$\eta^{0}$ (up to a subsequence). In Claims 4 \& 5 we prove that $\eta^{0}$ represents $\mu$ and is concentrated on curves $\gamma \in \mathcal{AC}_{T}^{2}$ which are solutions to~\eqref{eq:odegen}.

\medskip

\noindent\paragraph{Claim~1} For~$\eps \in (0,1]$ define the finite measures and the Borel maps
\begin{align*}
\mu^\eps_t &\eqdef \mu_t * \varrho_\eps + \eps\, \mssg \comma & \mssV^\eps_t &\eqdef (v_t\mu_t)*\varrho_\eps \comma & \mssW^\eps_t &\eqdef (w_t\mu_t)*\varrho_\eps \comma
\\
v^\eps_t &\eqdef \frac{\mssV^\eps_t}{\mu^\eps_t} \comma & w^\eps_t &\eqdef \frac{\mssW^\eps_t}{\mu^\eps_t} \comma
\end{align*}
the last two quotients being quotients of densities with respect to~$\mcL^d$. Then~$v^\eps$ and~$w^\eps$ are Borel maps,~$\mu^\eps \in \mcC([0,T];\msM_+(\R^d))$, the triple~$(v^\eps, w^\eps, \mu^\eps)$ satisfies~\eqref{eq:w1loccone},~\eqref{eq:cercone} and
\begin{align}
\label{eq:unifenergy}
\int_0^T \int_{\R^d} &  \braket{ |v^\eps_t|^2 + |w^\eps_t|^2 } \diff\mu^\eps_t \diff t  \le \mcA \comma \qquad \eps \in (0,1] \fstop
\end{align}
 Moreover~$\mu^\eps_t \to \mu_t$ narrowly as~$\eps \downarrow 0$, for every~$t \in [0,T]$, the family~$\set{ \mu^\eps_t : \eps \in (0,1],\ t \in [0,T] }$ being tight with~$\mu^\eps_t(\R^d) = \mu_t(\R^d)+\eps \le \bar m + 1$.

\smallskip

\noindent\paragraph{Proof of Claim~1}
By standard properties of the convolution and  by the continuity of $\mssg$, the curve~$\mu^\eps$ belongs to~$\mcC([0,T];\msM_+(\R^d))$ and $\mu^\eps_t \to \mu_t$ narrowly as~$\eps \downarrow 0$. Since~$\supp\varrho_\eps \subset B_\eps(0) \subset B_1(0)$,
\[
\mu^\eps_t\big( \R^d \setminus B_R(0) \big) \le \mu_t\big( \R^d \setminus B_{R-1}(0) \big) + \eps\, \mssg\big( \R^d\setminus B_R(0) \big) \comma \qquad R>1 \comma
\]
and the family~$\set{\mu_t}_{t \in [0,T]}$ is tight, being a compact subset of~$\msM_+(\R^d)$ by the narrow continuity of~$\mu$ on the compact interval~$[0,T]$. The asserted tightness follows.

The densities of~$\mu^\eps_t$,~$\mssV^\eps_t$ and~$\mssW^\eps_t$ are smooth in~$x$, and~$(t,x) \mapsto \mssV^\eps_t(x)$ is Borel, being continuous in~$x$ and Borel in~$t$. The same holds for~$\mssW^\eps$, so that~$v^\eps$ and~$w^\eps$ are Borel, also noting that~$\mu_t^\eps>0$. To check~\eqref{eq:w1loccone}, let~$R>0$ and~$c_R \eqdef \min_{\overline{B_R(0)}} \mssg>0$, and set~$a_t \eqdef \int_{\R^d} \braket{|v_t|+|w_t|} \diff\mu_t$, which belongs to~$L^1([0,T])$ by~\eqref{eq:firstpowergen}. On~$B_R(0)$ we have~$\mu^\eps_t \ge \eps c_R$ and
\[
|\mssV^\eps_t| + |\mssW^\eps_t| \le \|\varrho_\eps\|_\infty\, a_t \comma \qquad |\nabla\mssV^\eps_t| + |\nabla\mssW^\eps_t| \le \|\nabla\varrho_\eps\|_\infty\, a_t \comma
\]
\[
|\nabla\mu^\eps_t| \le \|\nabla\varrho_\eps\|_\infty \bar m + \eps \|\nabla\mssg\|_\infty \comma
\]
so that, on~$B_R(0)$,
\[
|v^\eps_t| + |w^\eps_t| \le \frac{ \|\varrho_\eps\|_\infty }{ \eps c_R }\, a_t \comma
\]
\[
 |\nabla v^\eps_t| + |\nabla w^\eps_t| \le \paren{ \frac{ \|\nabla\varrho_\eps\|_\infty \bar m + \|\varrho_\eps\|_\infty \big( \|\nabla\varrho_\eps\|_\infty \bar m + \eps\|\nabla\mssg\|_\infty \big) }{ \eps^2 c_R^2 } } a_t \fstop 
\]
Since~$a \in L^1([0,T])$, assumption~\eqref{eq:w1loccone} holds for~$(v^\eps,w^\eps, \mu^{\eps})$. By linearity of the convolution and since~$\eps\mssg$ does not depend on time, \eqref{eq:cercone} holds for~$( v^\eps,w^\eps, \mu^\eps)$. Finally, since $\mu^\eps_t \ge \mu_t * \varrho_\eps$, we have
\[
\int_{\R^d} |v^\eps_t|^2 \diff\mu^\eps_t = \int_{\R^d} \frac{ |\mssV^\eps_t|^2 }{ \mu^\eps_t } \diff x \le \int_{\R^d} \frac{ |\mssV^\eps_t|^2 }{ \mu_t*\varrho_\eps } \diff x  =  \int_{\R^d} \left |\frac{(v_t \mu_t) \ast \varrho_\eps}{\mu_t \ast \varrho_\eps}\right |^2 \diff (\mu_t \ast \varrho_\eps) \le \int_{\R^d} |v_t|^2 \diff\mu_t \comma
\]
 where we have also used~\cite[Lemma 8.1.10]{AmbGigSav08} with~$p=2$,~$\mu=\mu_t$,~$E=v_t \mu_t$, and~$\varrho=\varrho_\eps$.
The same computation for~$w^\eps$ and an integration in time give~\eqref{eq:unifenergy}.

\medskip

\noindent\paragraph{Claim~2} For every~$\eps \in (0,1]$ there exists a finite measure~$\eta^\eps$ on~$\cic$ such that
\begin{equation}\label{eq:etaepsprop}
\eta^\eps\big( \cic \big) \le M^* \comma \quad h^2_t(\eta^\eps) = \mu^\eps_t \ \ \forall\, t \in [0,T] \comma \quad \int_{\mcC_{T}} \mcF \diff \eta^\eps \le \mcA \comma
\end{equation}
where~$M^* \eqdef 2(K+T)/T + 2T\mcA$, and such that~$\eta^\eps$ is concentrated on curves~$\gamma \in \acic$ which satisfy~\eqref{eq:odegen} for~$(v^\eps,w^\eps)$ and~$\Sigma(\gamma)=1$.

\smallskip

\noindent\paragraph{Proof of Claim~2} By Claim~1 we may apply Theorem~\ref{thm:supercone} to~$(v^\eps, w^\eps, \mu^\eps)$, obtaining~$\hat\eta^\eps \in \mcP(\cic)$ concentrated on curves of~$\acic$ satisfying~\eqref{eq:odegen} for~$(v^\eps,w^\eps)$, and with~$h^2_t(\hat\eta^\eps)=\mu^\eps_t$ for every~$ t \in [0,T]$.  Thus, by~\eqref{eq:odegen} for~$(v^\eps,w^\eps)$ and~\eqref{eq:unifenergy}, we get 
\begin{equation}\label{eq:energyepsbound}
\int_{\mcC_{T}} \mcF \diff\hat\eta^\eps = \int_0^T \int_{\R^d} \braket{ |v^\eps_t|^2 + \tfrac14|w^\eps_t|^2 } \diff\mu^\eps_t \diff t \le \mcA \comma
\end{equation}
while the equality~$h^2_t(\hat\eta^\eps)=\mu^\eps_t$ gives 
\begin{equation}\label{eq:l2rbound}
 \int_{\mcC_{T}}  \paren{ \int_0^T k_\gamma(t) \diff t } \diff\hat\eta^\eps(\gamma) = \int_0^T \mu^\eps_t(\R^d) \diff t = K + \eps T \le K+T \fstop
\end{equation}
Since~$r_\gamma \in \mathcal{AC} ([0,T])$ with~$|r_\gamma'| \le |\gamma'|$ for $\gamma \in \mathcal{AC}_{T}^{2} $, for every~$t \in [0,T]$ we have
\[
r^2_\gamma(t) \le 2 \paren{ \frac1T \int_0^T r^2_\gamma (s)  \diff s } + 2 \paren{ \int_0^T |r_\gamma' (s) | \diff s }^2 \le \frac{2}{T}\int_0^T k_\gamma (s)  \diff s + 2T \mcF(\gamma) \comma
\]
where we used that~$r^2_\gamma(s_0) = \frac1T\int_0^T r^2_\gamma \diff s$ for some~$s_0 \in (0,T)$. Hence, by~\eqref{eq:energyepsbound} and~\eqref{eq:l2rbound},
\begin{equation}\label{eq:sigmabound}
 \int_{\mcC_{T}} \Sigma^2 \diff\hat\eta^\eps \le \frac{2(K+T)}{T} + 2T\mcA = M^* \fstop
\end{equation}
The curves with~$\Sigma(\gamma)=0$ are constantly equal to~$\mfo$ and give no contribution to~$h^2_t (\hat\eta^\eps)$. We may therefore replace~$\hat\eta^\eps$ by its restriction to~$\set{\Sigma>0}$, which is a Borel set since~$\Sigma$ is continuous, without affecting the properties above. We finally set~$\eta^\eps \eqdef \mcR_{1/\Sigma}\hat\eta^\eps$, with the notation introduced in~\eqref{e:Rlambda}. Since~$\mfD_{1/\Sigma}$ simply multiplies the radius of each curve by a positive constant,~$\eta^\eps$ is still concentrated on curves of~$\acic$ satisfying~\eqref{eq:odegen} for~$(v^\eps,w^\eps)$ and, since~$\Sigma(\mfD_{1/\Sigma}\gamma)=1$, on curves with~$\Sigma=1$. By Lemma~\ref{le:dilation},~$h^2_t(\eta^\eps)=\mu^\eps_t$ for every~$t \in [0,T]$ and, since $\mcF$ satisfies~\eqref{e:homogeneity}, Lemma~\ref{le:dilation} together with~\eqref{eq:energyepsbound} and~\eqref{eq:sigmabound} gives
\[
 \int_{\mcC_{T}} \mcF \diff\eta^\eps = \int_{\mcC_{T}} \mcF \diff\hat\eta^\eps \le \mcA \comma \qquad \eta^\eps\big( \cic \big) =  \int_{\mcC_{T}} \Sigma^2 \diff\hat\eta^\eps \le M^* \fstop
\]

\medskip

\noindent\paragraph{Claim~3} There exist a sequence~$\eps_n \downarrow 0$ and a finite measure~$\eta^0$ on~$\cic$, concentrated on~$\set{\Sigma \le 1}$, such that~$\eta^{\eps_n} \to \eta^0$ narrowly as~$n \to \infty$.

\smallskip

\noindent\paragraph{Proof of Claim~3} By Claim~1 and \cite[Remark 5.1.5]{AmbGigSav08} applied to the tight and bounded in mass family~$\set{\mu^\eps_t : \eps \in (0,1],\ t \in [0,T]}$, there is~$\zeta \in \mcC( \R^d ; [1,\infty))$ with compact sublevels such that
\[
L \eqdef \sup_{\eps \in (0,1],\, t \in [0,T]} \int_{\R^d} \zeta \diff \mu^\eps_t < \infty \fstop
\]
Let~$\mcG_{\zeta}$ be as in~\eqref{e:Gzeta} and let $\mcK_c$ be the compact set of Lemma~\ref{le:conecompact}. We immediately see that
\[
 \int_{\mcC_{T}}  \mcG_\zeta \diff\eta^\eps = \int_0^T \int_{\R^d} \zeta \diff\mu^\eps_t \diff t \le TL \comma \qquad \eps \in (0,1] \comma
\]
while~$ \int_{\mcC_{T}} \mcF \diff\eta^\eps \le \mcA$ by~\eqref{eq:etaepsprop}. Since~$\eta^\eps$ is concentrated on~$\set{\Sigma=1}$ by Claim~2, Markov's inequality gives, for every~$c>0$,
\[
\eta^\eps\Big( \cic \setminus \mcK_c \Big) \le \frac{1}{c} \int_{\mcC_{T}} \braket{ \mcF + \mcG_\zeta } \diff\eta^\eps \le \frac{ \mcA + TL }{c} \comma
\]
 As the total masses are bounded by~$M^*$ by Claim~2, the family~$\set{\eta^\eps}_{\eps \in (0,1]}$ is tight and bounded, and Prokhorov's theorem (see e.g.~\cite[Theorem 8.6.2]{Bog07}) provides a sequence~$\eps_n \downarrow 0$ and a finite measure~$\eta^0$ with~$\eta^{\eps_n} \to \eta^0$ narrowly. Finally~$\set{\Sigma \le 1}$ is closed and contains the support of every~$\eta^\eps$. Hence, $\eta^{0}$ is concentrated on~$\set{\Sigma \le 1}$.

\medskip

\noindent\paragraph{Claim~4} We have~$h^2_t(\eta^0) = \mu_t$ for every~$t \in [0,T]$ and~$\eta^0$ is concentrated on~$\acic$ with~$ \int_{\mcC_{T}} \mcF \diff\eta^0 \le \mcA$.

\smallskip

\noindent\paragraph{Proof of Claim~4} Fix~$t \in [0,T]$ and~$\phi \in \Cb(\R^d)$. The function~$\gamma \mapsto \phi(x_\gamma(t))k_\gamma(t)$, extended by~$0$ where~$r_\gamma(t)=0$, is continuous on~$\cic$, because~$\mfC[\R^d] \ni [x,r]\mapsto \phi(x)r^2$ is continuous, and it is bounded by~$\|\phi\|_\infty$ on the set~$\set{\Sigma\le1}$, which carries every~$\eta^{\eps_n}$ and~$\eta^0$. Hence, by Claim~3 and Claim~1,
\[
\int_{\R^d} \phi \diff h^2_t(\eta^0) = \lim_n \int_{\R^d} \phi \diff h^2_t(\eta^{\eps_n}) = \lim_n \int_{\R^d} \phi \diff\mu^{\eps_n}_t = \int_{\R^d} \phi \diff\mu_t \comma
\]
which is the first assertion. The second one follows from the lower semicontinuity of~$\mcF$ (cf.~Lemma~\ref{le:conecompact}) and Fatou's Lemma. In particular~$\mcF<\infty$ for~$\eta^0$-a.e.~$\gamma$, i.e.~$\gamma \in \acic$ for~$\eta^0$-a.e.~$\gamma$.

\medskip

\noindent\paragraph{Claim~5}The measure~$\eta^0$ is concentrated on curves satisfying~\eqref{eq:odegen}.

\smallskip

\noindent\paragraph{Proof of Claim~5} For~$\phi \in \mcC^\infty_c(\R^d)$,~$\psi \in \mcC^\infty_c((0,T))$, Borel maps~$u \colon [0,T]\times\R^d \to \R^d$,~$z\colon [0,T]\times\R^d\to\R$, and~$\gamma \in \acic$, we set
\[
\mcZ^{u,z}_{\phi,\psi}(\gamma) \eqdef \int_0^T \psi'\, k_\gamma\, \phi(x_\gamma) \diff t + \int_0^T \psi\, k_\gamma \Big( \scalprod{ \nabla\phi(x_\gamma) }{ u_t(x_\gamma) } + \phi(x_\gamma)\, z_t(x_\gamma) \Big) \diff t \comma
\]
which is well defined and finite whenever
\begin{align}
    \label{e:condition}
     \int_0^T k_\gamma (t)  \braket{ |u_t (x_\gamma(t)) | + |z_t (x_\gamma(t)) | } \diff t<\infty \fstop
\end{align}
We proceed in three steps.

\emph{Step 1: the functional vanishes on solutions.} Let~$\gamma \in \acic$ satisfy~\eqref{e:condition} and
\begin{align}
    \label{e:condition2}
    x'_{\gamma}(t) = u_{t} (x_{\gamma}(t))\comma \qquad k'_{\gamma}(t) = z_{t} (x_{\gamma}(t)) k_{\gamma}(t) \quad \text{ for a.e.~} t \in O_\gamma \fstop
\end{align}
Let~$\beta_\phi(t) \eqdef k_\gamma(t)\phi(x_\gamma(t))$,~$t \in [0,T]$. Then~$\beta_\phi$ is continuous on~$[0,T]$, it vanishes on~$[0,T]\setminus O_\gamma$, and on~$O_\gamma$ it is locally absolutely continuous with
\begin{equation}\label{eq:chainrulebeta}
\beta'_\phi =  k'_\gamma \phi(x_\gamma) + k_\gamma \scalprod{ \nabla\phi(x_\gamma) }{ x'_\gamma } = k_\gamma \Big( \scalprod{\nabla\phi(x_\gamma)}{u_t(x_\gamma)} + \phi(x_\gamma) z_t(x_\gamma) \Big) \fstop
\end{equation}
The right-hand side of~\eqref{eq:chainrulebeta} belongs to~$L^1(O_\gamma)$ in view of~\eqref{e:condition}. Since~$\beta_\phi$ vanishes on the complement of~$O_\gamma$ and is continuous,~$\beta_\phi \in \mathcal{AC} ([0,T])$ with~$\beta'_\phi=0$ a.e.~outside~$O_\gamma$. Multiplying~\eqref{eq:chainrulebeta} by~$\psi$, integrating over $[0, T]$, we deduce~$\mcZ^{u,z}_{\phi,\psi}(\gamma)=0$ by an integration by parts.

\emph{Step 2: an estimate on~$\eta^0$.} Let~$u \in \mcC_c([0,T]\times\R^d;\R^d)$ and~$z \in \mcC_c([0,T]\times\R^d)$; then the functional~$\mcZ^{u,z}_{\phi,\psi}$ is continuous on~$\cic$. Indeed, its integrand is of the form~$g(t,\gamma_t)$ with~$g(t,\cdot)$ continuous on~$\mfC[\R^d]$ and~$|g(t,[x, r])| \le c(\phi,\psi,u,z)\, r^2$, so that the assertion follows by dominated convergence since $\gamma \mapsto \Sigma (\gamma) $  is  continuous. 

 We also infer that
\begin{align*}
\int_{\mcC_{T}} \int_{0}^{T} k_{\gamma} (t) [| u_{t} (x_{\gamma} (t))| + |z_{t} (x_{\gamma} (t))| ] \diff t \diff \eta^{\eps_{n}} (\gamma) = \int_{0}^{T} \int_{\R^{d}} \braket{| u_{t}| + |z_{t}|} \diff \mu^{\eps_{n}}_t \diff t <\infty \comma
\end{align*}
since $\mu^{\eps_{n}} \in \mcC([0, T]; \msM_{+} (\R^{d}))$. Hence, $\eta^{\eps_{n}}$-a.e.~$\gamma \in \cic$ satisfies the integrability~\eqref{e:condition}. Since~$\eta^{\eps_n}$ is concentrated on solutions to~\eqref{eq:odegen} for~$(v^{\eps_n},w^{\eps_n})$, for~$\eta^{\eps_n}$-a.e.~$\gamma$ we have
\[
\mcZ^{u,z}_{\phi,\psi}(\gamma) = \int_0^T \psi(t) k_\gamma(t) \Big( \scalprod{ \nabla\phi(x_\gamma) }{ (u_t - v^{\eps_n}_t)(x_\gamma) } + \phi(x_\gamma)\, (z_t-w^{\eps_n}_t)(x_\gamma) \Big) \diff t \fstop
\]
Therefore, we have
\begin{align}
\label{e:somethingZ}
 \int_{\mcC_{T}} \big| \mcZ^{u,z}_{\phi,\psi} \big| \diff\eta^{\eps_n} \le c_{\phi,\psi} \int_0^T \int_{\R^d} \braket{ |u_t - v^{\eps_n}_t| + |z_t - w^{\eps_n}_t| } \diff \mu^{\eps_n}_t \diff t \comma
\end{align}
with~$c_{\phi,\psi} \eqdef \|\psi\|_\infty \max\set{ \|\nabla\phi\|_\infty, \|\phi\|_\infty }$. We estimate the right-hand side of~\eqref{e:somethingZ}. Since~$v^\eps_t \mu^\eps_t = \mssV^\eps_t$ and~$\mu^\eps_t = \mu_t*\varrho_\eps + \eps\mssg$, recalling~\eqref{e:some-names} it holds 
\begin{align*}
\int_{\R^d} |u_t - v^\eps_t| \diff\mu^\eps_t &= \big| \mssV^\eps_t - u_t \mu^\eps_t \big|(\R^d)
\\
&\le \int_{\R^d} \bigl | \big( (v_t-u_t)\mu_t \big) * \varrho_\eps \bigr |  \diff x + \int_{\R^d} \bigl | (u_t\mu_t)*\varrho_\eps - u_t\,(\mu_t * \varrho_\eps) \bigr | \diff x+ \eps \int_{\R^d} |u_t| \mssg \diff x
\\
&\le \int_{\R^d} |v_t - u_t| \diff\mu_t + \omega_u(\eps)\, \bar m + \eps \|u\|_\infty \comma
\end{align*}
where~$\omega_u$ is the modulus of continuity of~$u$. The same estimate holds for~$z$ and~$w^\eps$, so that
\[
\limsup_n  \int_{\mcC_{T}}  \big| \mcZ^{u,z}_{\phi,\psi} \big| \diff\eta^{\eps_n} \le c_{\phi,\psi} \int_0^T \int_{\R^d} \braket{ |v_t-u_t| + |w_t-z_t| } \diff\mu_t \diff t \fstop
\]
Since~$|\mcZ^{u,z}_{\phi,\psi}|$ is continuous and non-negative, Claim~3 gives
\begin{equation}\label{eq:Zestimate}
 \int_{\mcC_{T}}  \big| \mcZ^{u,z}_{\phi,\psi} \big| \diff\eta^{0} \le c_{\phi,\psi} \int_0^T \int_{\R^d} \braket{ |v_t-u_t| + |w_t-z_t| } \diff\mu_t \diff t \fstop
\end{equation}

\emph{Step 3: conclusion.} By Claim~4 and~\eqref{eq:firstpowergen} we have that
\begin{align*}
    \int_0^T k_\gamma\braket{|v_t (x_\gamma) |+|w_t(x_\gamma) |}\diff t<\infty \qquad \text{for~$\eta^0$-a.e.~$\gamma$.}
\end{align*} 
In particular~$\mcZ^{v,w}_{\phi,\psi}$ is well defined~$\eta^0$-a.e.~in $\cic$ and
\[
\big| \mcZ^{v,w}_{\phi,\psi}(\gamma) - \mcZ^{u,z}_{\phi,\psi}(\gamma) \big| \le c_{\phi,\psi} \int_0^T k_\gamma \braket{ |v_t(x_\gamma) -u_t (x_\gamma) | + |w_t (x_\gamma) -z_t (x_\gamma) | } \diff t \fstop
\]
Integrating in~$\eta^0$ and using~\eqref{eq:Zestimate},
\begin{align}
    \label{e:Z=0}
 \int_{\mcC_{T}} \big| \mcZ^{v,w}_{\phi,\psi} \big| \diff\eta^0 \le 2c_{\phi,\psi} \int_0^T \int_{\R^d} \braket{ |v_t-u_t| + |w_t-z_t| } \diff\mu_t \diff t \fstop
\end{align}
Since~$\Theta = \int_{0}^{T} \delta_{t} \otimes \mu_{t} \diff t$ is a finite Borel measure on~$[0,T]\times\R^d$ with~$v\in L^1([0, T] \times \R^{d}, \Theta;  \R^{d})$ and $w \in L^{1} ([0, T] \times \R^d, \Theta)$ (as observed at the beginning of the proof) and the spaces~$\mcC_c([0,T]\times\R^d; \R^{d})$ and~$\mcC_c([0,T]\times\R^d)$ are dense in~$L^1([0, T] \times \R^{d}, \Theta;  \R^{d})$ and in~$L^{1} ([0, T] \times \R^d, \Theta)$, respectively, the right-hand side of~\eqref{e:Z=0} can be made arbitrarily small. Thus,~$\mcZ^{v,w}_{\phi,\psi}(\gamma)=0$ for~$\eta^0$-a.e.~$\gamma$.

Let now~$\mcS$ be the countable family of functions of the form~$\chi(\cdot-q)$ and~$\scalprod{e}{\cdot-q}\chi(\cdot-q)$, with~$q,e \in \Q^d$ and~$\chi \in \mcC^\infty_c(\R^d)$ a fixed function with~$\chi \equiv 1$ on~$B_1(0)$, and let~$\mcT \subset \mcC^\infty_c((0,T))$ be countable and dense in the uniform norm together with the first derivatives. Intersecting countably many sets of full measure, we find that for $\eta^{0}$-a.e.~$\gamma \in \acic$ it holds
\begin{align}
\label{e:finalgamma-1}
    & \int_0^T k_\gamma\braket{|v_t|+|w_t|}(x_\gamma)\diff t<\infty\,,
    \\
    & \mcZ^{v,w}_{\phi,\psi}(\gamma)=0 \quad \text{for every~$\phi \in \mcS$ and~$\psi \in \mcT$.} \label{e:finalgamma-2}
\end{align}
We fix~$\gamma \in \acic$ satisfying~\eqref{e:finalgamma-1}--\eqref{e:finalgamma-2}. For~$\phi \in \mcS$, we infer that~$\mcZ^{v,w}_{\phi,\psi}(\gamma) = 0$ for every~$\psi \in \mcC^\infty_c((0,T))$. Thus, the absolutely continuous function~$\beta_\phi = k_\gamma\phi(x_\gamma)$ has distributional derivative
\[
\beta_\phi' = k_\gamma \Big( \scalprod{\nabla\phi(x_\gamma)}{v_t(x_\gamma)} + \phi(x_\gamma) w_t(x_\gamma) \Big) \qquad \text{a.e.~in } (0,T) \fstop
\]
Comparing with the chain rule, valid a.e.~in~$O_\gamma$ by Lemma~\ref{le:accurves}, we obtain
\begin{equation}\label{eq:comparison}
k_\gamma'\, \phi(x_\gamma) + k_\gamma \scalprod{ \nabla\phi(x_\gamma) }{ x_\gamma' - v_t(x_\gamma) } = k_\gamma\, \phi(x_\gamma)\, w_t(x_\gamma) \qquad \text{a.e.~in } O_\gamma \fstop
\end{equation}
Let~$t \in O_\gamma$ be a point at which~\eqref{eq:comparison} holds for every~$\phi \in \mcS$, and let~$q \in \Q^d$ with~$|x_\gamma(t)-q|<1$. Choosing~$\phi \eqdef \chi(\cdot-q)$, which equals~$1$ with vanishing gradient near~$x_\gamma(t)$, \eqref{eq:comparison} gives~$k_\gamma'(t) = w_t(x_\gamma(t))k_\gamma(t)$. Choosing~$\phi \eqdef \scalprod{e}{\cdot-q}\chi(\cdot-q)$, whose gradient equals~$e$ near~$x_\gamma(t)$, and exploiting the previous identity we are left with~$k_\gamma(t) \scalprod{e}{x_\gamma'(t)-v_t(x_\gamma(t))} = 0$ for every~$e \in \Q^d$. Since~$k_\gamma(t)>0$, this gives~$x_\gamma'(t) = v_t(x_\gamma(t))$. Hence,~$\gamma$ satisfies~\eqref{eq:odegen}.

\paragraph{Conclusion} By Claims~4 and~5 the finite measure~$\eta^0$ is concentrated on curves of~$\acic$ satisfying~\eqref{eq:odegen} and~$h^2_t(\eta^0)=\mu_t$ for every~$t \in [0,T]$. In particular~$\eta^0 \ne 0$, because~$\mu$ does not vanish identically, so that~$M \eqdef \eta^0(\cic) \in (0,\infty)$. Applying Lemma~\ref{le:dilation} with the constant map~$\lambda \equiv M^{1/2}$ and setting $\eta \eqdef \mcR_{M^{1/2}} \eta^0$ we obtain a probability measure which is still concentrated on curves of~$\acic$ satisfying~\eqref{eq:odegen}, and which satisfies~$h^2_t(\eta) = h^2_t(\eta^0) = \mu_t$ for every~$t \in [0,T]$. The identity in~\eqref{eq:energyidgen} immediately follows.
\end{proof}

 We conclude the section with the converse of Theorem~\ref{thm:superconegen}: a measure~$\eta$ concentrated on solutions to the characteristic system in the cone produces, through the map~$h^2_t$, a solution to the continuity equation with reaction.

\begin{proposition}[Converse of Theorem~\ref{thm:superconegen}]\label{prop:converse}
Let~$v \colon [0,T]\times\R^d\to\R^d$,~$w \colon [0,T]\times\R^d\to\R$ be Borel maps and let~$\eta\in\msM_+(\cic)$ be concentrated on curves~$\gamma\in\acic$ satisfying~\eqref{eq:odegen}. Set~$\mu_t\eqdef h^2_t(\eta)$ for~$t\in[0,T]$. If
\begin{equation}\label{eq:pbd1converse}
\int_0^T\int_{\R^d}\braket{1+|v_t|+|w_t|}\diff\mu_t\diff t<\infty\comma
\end{equation}
then~$\mu\in\mcC([0,T];\msM_+(\R^d))$ and~$\mu$ solves~\eqref{eq:cerconegen}.
\end{proposition}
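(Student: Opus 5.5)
The plan is to deduce both assertions from the behaviour of the scalar functions $\beta_\phi(t)\eqdef k_\gamma(t)\phi(x_\gamma(t))$ along $\eta$-a.e.\ curve, exactly as in Step~1 of Claim~5 in the proof of Theorem~\ref{thm:superconegen}. As a preliminary step we read \eqref{eq:pbd1converse} through the definition \eqref{eq:htdef} of $h^2_t$ and Tonelli's theorem:
\[
\int_{\cic}\int_0^T k_\gamma(t)\braket{1+|v_t(x_\gamma(t))|+|w_t(x_\gamma(t))|}\diff t\diff\eta(\gamma)=\int_0^T\int_{\R^d}\braket{1+|v_t|+|w_t|}\diff\mu_t\diff t<\infty.
\]
Here measurability of $(t,\gamma)\mapsto k_\gamma(t)|v_t(x_\gamma(t))|$ follows from continuity of $(t,\gamma)\mapsto\gamma_t$ and the convention that the integrand vanishes at the vertex. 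Consequently $\eta$-a.e.\ $\gamma$ satisfies \eqref{e:condition} with $(u,z)=(v,w)$, as well as \eqref{eq:odegen}.

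For such $\gamma$ and every $\phi\in\mcC^1_b(\R^d)$, Step~1 of Claim~5 shows that $\beta_\phi\in\mathcal{AC}([0,T])$. Its derivative is $\beta_\phi'=k_\gamma(\scalprod{\nabla\phi(x_\gamma)}{v_t(x_\gamma)}+\phi(x_\gamma)w_t(x_\gamma))$ a.e.\ on $O_\gamma$ and $0$ a.e.\ outside. In particular
\[
|\beta_\phi(t)-\beta_\phi(s)|\le\|\phi\|_{\mcC^1}\int_s^t k_\gamma\braket{|v_r|+|w_r|}(x_\gamma(r))\diff r.
\]
Integrating in $\eta$ gives $|\int\phi\diff\mu_t-\int\phi\diff\mu_s|\le\|\phi\|_{\mcC^1}\int_s^t\int\braket{|v_r|+|w_r|}\diff\mu_r\diff r$, which tends to $0$ as $|t-s|\to0$ by absolute continuity of the integral.

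Before this is meaningful I must check that each $\mu_t$ is a finite measure. This is where I expect the main obstacle, since \eqref{eq:pbd1converse} only controls $\int_0^T\mu_t(\R^d)\diff t$ and not every single $\mu_t$. Taking $\phi\equiv1$ above, $|k_\gamma(t)-k_\gamma(s)|\le\int_0^T k_\gamma|w_r(x_\gamma)|\diff r$, so $\sup_t k_\gamma(t)\le\frac1T\int_0^T k_\gamma\diff r+\int_0^T k_\gamma|w_r(x_\gamma)|\diff r$. Integrating in $\eta$, $\int\Sigma^2\diff\eta<\infty$ by \eqref{eq:pbd1converse}, whence $\mu_t(\R^d)\le\int\Sigma^2\diff\eta$ uniformly in $t$. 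This domination also lets me pass limits under the integral.

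Narrow continuity then follows in two stages. First, the estimate above gives continuity of $t\mapsto\int\phi\diff\mu_t$ for $\phi\in\mcC^1_b$. Second, since the masses are uniformly bounded, it extends to $\phi\in\mcC_b$ by dominated convergence applied directly on curves: $\gamma\mapsto\phi(x_\gamma(t))k_\gamma(t)$ is continuous in $t$, because $[x,r]\mapsto\phi(x)r^2$ is continuous on the cone, and it is dominated by $\|\phi\|_\infty\Sigma^2\in L^1(\eta)$. This direct argument actually makes the $\mcC^1$ step unnecessary.

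For the equation, I would take $\varphi\in\mcC^\infty_c((0,T)\times\R^d)$ and apply the same chain rule to $t\mapsto k_\gamma(t)\varphi_t(x_\gamma(t))$, whose derivative picks up the extra term $k_\gamma\partial_t\varphi$. This function vanishes at $0$ and $T$, so $\int_0^T k_\gamma\braket{\partial_t\varphi+\scalprod{\nabla\varphi}{v_t}+\varphi w_t}(x_\gamma)\diff t=0$ for $\eta$-a.e.\ $\gamma$. Integrating against $\eta$, Fubini (justified by the integrability obtained above) and the definition of $h^2_t$ give $\int_0^T\int\braket{\partial_t\varphi+\scalprod{\nabla\varphi}{v_t}+w_t\varphi}\diff\mu_t\diff t=0$, which is \eqref{eq:cerconegen}.
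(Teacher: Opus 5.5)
Your proposal is correct and follows the paper's proof essentially step by step. The paper obtains the uniform bound $\int\Sigma^2\,\mathrm{d}\eta<\infty$ from the absolute continuity of $k_\gamma$ with $k_\gamma'=w_t(x_\gamma)\,k_\gamma$. It then proves narrow continuity by dominated convergence with majorant $\|\phi\|_\infty\Sigma^2$, and derives the equation from the chain rule for $k_\gamma\,\varphi_t(x_\gamma)$ along $\eta$-a.e.\ curve, integrated in $\eta$ via Tonelli/Fubini. As you observe yourself, your preliminary $\mathcal{C}^1_b$ time-Lipschitz estimate is redundant and can be dropped.
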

\begin{proof}
\emph{Finiteness and continuity of~$\mu$.} By~$\mu_t\eqdef h^2_t(\eta)$ for every~$t \in [0,T]$, together with Tonelli's theorem,
\[
\int_0^T\mu_t(\R^d)\diff t=\int_{\cic}\int_0^T k_\gamma(t)\diff t\diff\eta(\gamma) \comma\qquad \int_0^T\int_{\R^d}|w_t|\diff\mu_t\diff t=\int_{\cic}\int_0^T k_\gamma(t)|w_t|(x_\gamma(t))\diff t\diff\eta(\gamma)\comma
\]
both finite by~\eqref{eq:pbd1converse}. In particular, for~$\eta$-a.e.~$\gamma$ it holds~$\int_0^T k_\gamma(t)|w_t|(x_\gamma(t))\diff t<\infty$; we fix such a~$\gamma$. Since~$\gamma\in\acic$ satisfies~\eqref{eq:odegen}, the function~$k_\gamma$ is absolutely continuous on~$[0,T]$ by Lemma~\ref{le:accurves}, with~$k_\gamma'(t)=w_t(x_\gamma(t))k_\gamma(t)$ for a.e.~$t \in O_\gamma$ and~$k_\gamma'=0$ a.e.~on~$[0,T]\setminus O_\gamma$. Picking~$s_0\in(0,T)$ with~$k_\gamma(s_0)=\tfrac1T\int_0^T k_\gamma(s)\diff s$, we get, for every~$t\in[0,T]$,
\[
k_\gamma(t)\le k_\gamma(s_0)+\int_0^T|k_\gamma'|(s)\diff s\le\frac1T\int_0^T k_\gamma(s)\diff s+\int_0^T k_\gamma(s)|w_s|(x_\gamma(s))\diff s\comma
\]
whence, taking the maximum in~$t\in[0,T]$ and integrating w.r.t.~$\eta$,
\begin{equation}\label{eq:sigmaconverse}
\int_{\cic}\Sigma(\gamma)^2\diff\eta(\gamma)\le\frac1T\int_0^T\mu_t(\R^d)\diff t+\int_0^T\int_{\R^d}|w_t|\diff\mu_t\diff t<\infty\fstop
\end{equation}
Consequently~$\mu_t(\R^d)=\int_{\cic}k_\gamma(t)\diff\eta(\gamma) \le\int_{\cic}\Sigma^2\diff\eta<\infty$ for every~$t\in[0,T]$, so that~$\mu_t\in\msM_+(\R^d)$; moreover, for~$\phi\in\Cb(\R^d)$, the map~$[0,T] \ni t\mapsto\int_{\R^d}\phi\diff\mu_t=\int_{\cic}\phi(x_\gamma(t))k_\gamma(t)\diff\eta$ is continuous by dominated convergence, its integrand being continuous in~$t$ and dominated by~$\|\phi\|_\infty\Sigma^2\in L^1(\cic, \eta)$. Hence~$\mu\in\mcC([0,T];\msM_+(\R^d))$.

\emph{The equation.} Fix~$\zeta\in\mcC^\infty_c((0,T)\times\R^d)$ and set~$C_\zeta\eqdef\sup\big(|\partial_t\zeta|+|\nabla\zeta|+|\zeta|\big)$. Using Tonelli's theorem,
\begin{equation}\label{eq:fubiniconverse}
\int_{\cic}\int_0^T k_\gamma(t)\braket{|\partial_t\zeta_t|+|\nabla\zeta_t|\,|v_t|+|\zeta_t|\,|w_t|}(x_\gamma(t))\diff t\diff\eta(\gamma) \le C_\zeta\int_0^T\int_{\R^d}\braket{1+|v_t|+|w_t|}\diff\mu_t\diff t\comma
\end{equation}
which is finite by~\eqref{eq:pbd1converse}. In particular the inner integral is finite for~$\eta$-a.e.~$\gamma$.

Let~$\gamma\in\acic$ satisfy~\eqref{eq:odegen} and~$\int_0^T k_\gamma(t)\braket{|v_t|+|w_t|}(x_\gamma(t))\diff t<\infty$, which by~\eqref{eq:fubiniconverse} holds for~$\eta$-a.e.~$\gamma$. Set~$\beta_\zeta(t)\eqdef\zeta_t(x_\gamma(t))k_\gamma(t)$,~$t \in [0,T]$. Exactly as for~$\beta_\phi$ in Step~1 of Claim~5 in the proof of~Theorem~\ref{thm:superconegen},~$\beta_\zeta$ is continuous on~$[0,T]$, vanishes on~$[0,T]\setminus O_\gamma$, and on~$O_\gamma$ it is locally absolutely continuous with, by~\eqref{eq:odegen},
\[
\beta_\zeta'(t)=k_\gamma(t)\braket{\partial_t\zeta_t(x_\gamma(t))+\scalprod{\nabla\zeta_t(x_\gamma(t))}{v_t(x_\gamma(t))}+\zeta_t(x_\gamma(t))\,w_t(x_\gamma(t))} \quad \text{ for a.e.~} t \in O_\gamma \fstop
\]
Its right-hand side belongs to~$L^1(O_\gamma)$ by the choice of~$\gamma$; since~$\beta_\zeta$ is continuous and vanishes off~$O_\gamma$, we get~$\beta_\zeta\in \mathcal{AC}([0,T])$ with~$\beta_\zeta'=0$ a.e.~outside~$O_\gamma$. As~$\zeta$ has compact support in~$(0,T)\times\R^d$, $\beta_\zeta(0)=\beta_\zeta(T)=0$, so that
\[
0=\int_0^T\beta_\zeta'(t)\diff t=\int_0^T k_\gamma(t)\braket{\partial_t\zeta_t(x_\gamma(t))+\scalprod{\nabla\zeta_t(x_\gamma(t))}{v_t(x_\gamma(t))}+\zeta_t(x_\gamma(t))\,w_t(x_\gamma(t))}\diff t\fstop
\]
Integrating w.r.t.~$\eta$ (possible by~\eqref{eq:fubiniconverse}) we obtain
\[
0=\int_0^T\int_{\R^d}\braket{\partial_t\zeta_t+\scalprod{\nabla\zeta_t}{v_t}+\zeta_t\,w_t}\diff\mu_t\diff t\fstop
\]
Since~$\zeta\in\mcC^\infty_c((0,T)\times\R^d)$ is arbitrary, this is precisely~\eqref{eq:cerconegen}.
\end{proof}

\section*{Acknowledgments}

 The work of SA was supported by the FRA2022 Project ``ReSinAPAS'' and by the Austrian Science Fund (FWF) through the project \href{https://www.fwf.ac.at/en/research-radar/10.55776/P35359}{10.55776/P35359}. SA is also member of the Gruppo Nazionale per l'Analisi Matematica, la Probabilit\`a e le loro Applicazioni (INdAM-GNAMPA) and acknowledges the support of the INdAM-GNAMPA 2026 Project ``Sistemi multi-agente e replicatore: derivazione particellare e ottimizzazione'' CUP E53C25002010001. The research of GES was supported by the Austrian Science Fund (FWF) through the project \href{https://www.doi.org/10.55776/F100800}{10.55776/F100800}.
For open-access purposes, the authors have applied a CC BY public copyright
license to any author-accepted manuscript version arising from this
submission.
\medskip

\textbf{Declaration of Generative AI in Scientific Writing. } During the preparation of this work, the authors used AI tools to assist in drafting initial text for routine technical descriptions and to perform language editing and stylistic revisions. All AI-generated content was thoroughly reviewed, verified, and refined by the authors, who retain full responsibility for the scientific accuracy, integrity, and final content of the manuscript.

\end{document}